\newtheorem{thm}{Theorem}[section]
\newtheorem{lemma}[thm]{Lemma}
\newtheorem{proposition}[thm]{Proposition}
\newtheorem{corollary}[thm]{Corollary}
\newtheorem{conjecture}[thm]{Conjecture}
\theoremstyle{definition}
\newtheorem{example}[thm]{Example}
\newtheorem{definition}[thm]{Definition}
\newtheorem{remark}[thm]{Remark}
\newtheorem{theorem}{Theorem}
\newcommand{\R}{{\mathbb{R}}}
\newcommand{\N}{{\mathbb{N}}}
\newcommand{\dg}{\dot{\gamma}}
\newcommand{\ehzcap}{c_{_{\rm EHZ}}}
\newcommand{\deta}{\dot{\eta}}
\newcommand{\dzeta}{\dot{\zeta}}
\title{Dynamical extensions of Zoll to nonsmooth convex bodies}
\author{Pazit Haim-Kislev }
\date{2025}
\begin{document}

\maketitle

\begin{abstract}
The symplectic Zoll property of smooth convex domains in the classical phase space has been extensively studied in recent years and, in particular, has been shown to detect local maximizers of the systolic ratio.
We propose a dynamical extension of this property to the non-smooth setting, related to the behavior of the Ekeland–Hofer–Zehnder capacity with respect to hyperplane cuts.
Under certain hypotheses, we establish its equivalence to a known topological extension of the Zoll property. 

In this context, we study the space of non-smooth action-minimizing closed characteristics and show that their dynamical behavior is not as irregular as one might first expect.
We classify several types of dynamical behaviors and derive a certain $H^1$-compactness result, which is of independent interest.

\end{abstract}

\section{Introduction and results}

Let $\R^{2n}$ be the classical phase space equipped with the standard symplectic structure 
\[
\omega \;=\; \sum_{i=1}^n dp_i \wedge dq_i .
\]
For a smooth star-shaped body $K \subset \R^{2n}$, the \emph{characteristic line bundle} of $\partial K$ is the kernel
\[
\ker\bigl(\omega|_{T\partial K}\bigr) \subset T\partial K .
\]
 A \emph{closed charactersitc} on $\partial K$ is a smooth map $\gamma : S^1 \to \partial K$ whose tangent vector is everywhere contained in this line bundle.
 Equivalently, closed characteristics (up to reparametrization) are closed Reeb orbits on the contact manifold $\partial K$ with respect to the contact form \[ \frac{1}{2} \sum_{i=1}^n p_i dq_i - q_i dp_i.\]
Closed characteristics on the boundaries of star-shaped domains in $\R^{2n}$ lie at the heart of symplectic dynamics (see e.g. \cite{ABHS, dynconvnotconvdim4, HWZ-3dconvex, bramhamHofer, viterboWeinsteinConj}).

In this paper, we are interested in the case where $K$ is a (bounded) convex body. 
By works of Weinstein \cite{weinstein78}, Rabinowitz \cite{rabinowitz78}, and Clarke \cite{clarke}, every compact convex hypersurface carries a closed Reeb orbit. 
Define the Ekeland--Hofer--Zehnder (EHZ) capacity $\ehzcap(K)$ to be the minimal action of a closed characteristic on $\partial K$. Note that the EHZ capacity does not satisfy the properties of a symplecitc capacity in general, when the domains are not assumed to be convex (we refer to, e.g., \cite{capacity_survey_1, capacity_survey_2, capacity_survey_3} for information on symplectic capacities).
An application of Clarke's dual action principle \cite{clarke} implies monotonicity with respect to inclusions: for convex domains $K_1 \subseteq K_2$ one has $\ehzcap(K_1) \leq \ehzcap(K_2)$. More generally, even if the image of a symplectic embedding of $K_1$ into $K_2$ is not convex, it still holds that $\ehzcap(K_1) \leq \ehzcap(K_2)$. However, this monotonicity with respect to (not necessarily convex) symplectic embeddings is extremely non-trivial, and in particular it recovers the famous Gromov's non-squeezing result \cite{gromov}. It is proved by the coincidence of the EHZ capacity with the restriction of genuine symplectic capacities, such as the Hofer--Zehnder capacity \cite{hofer_zehnder}, to the class of convex bodies.

Viewing the EHZ capacity as a ``symplectic size" of convex sets, it is natural to compare it with the volume of these sets.
A central quantitative theme is thus the \emph{systolic ratio}
$$ \rho_{\text{sys}}(K) := \frac{\ehzcap(K)^n}{n! \text{Vol}(K)}.$$
A natural follow-up question is to understand the maximizers of the systolic ratio, and this question has attracted much attention over the years (a partial list of relevant references includes \cite{abbondandoloBenedetti,  abbondandolo-benedetti-edtmair,ABHS, ArtsteinAvidanOstroverMilman, balitsky,Ch-H,edtmair, gutt-hutchings-ramos,hermann, karasev-sharipova, viterbo2000, rudolf}).
In particular, Viterbo \cite{viterbo2000} conjectured that the systolic ratio is maximized by the ball. This conjecture has influenced many subsequent works and revealed connections to a variety of other mathematical areas (see e.g. \cite{capacity_mahler}). In a recent joint work with Ostrover, we introduced a counterexample to Viterbo's conjecture \cite{counterexample}, thereby leaving us, at present, without a clear candidate for a maximizer.
We remark that there is still substantial evidence for the existence of an underlying principle explaining why the conjecture holds in many special cases, and it remains an intriguing problem to identify the appropriate subclass of convex domains for which Viterbo's conjecture is valid.

Following \cite{ABHS, abbondandoloBenedetti, abbondandolo-benedetti-edtmair}, it is known that smooth convex domains that satisfy the Zoll property are local maximizers of $\rho_{\text{sys}}$. The term Zoll is borrowed from Riemannian geometry and it means in the symplectic setting that $\partial K$ is foliated by action-minimizing closed characteristics. 
In addition, strictly convex local maximizers are Zoll.
It is known that Zoll convex domains satisfy Viterbo's conjecture (cf. \cite{boothbyWang} and \cite[Section 7.2]{Geiges_2008}). Moreover, in dimension four all smooth Zoll domains are symplectic balls \cite{ABHS}.

The space of smooth convex domains is, however, not compact. Building on the fact that the EHZ capacity is continuous with respect to the Hausdorff topology, there is a unique extension to non-smooth convex domains. 
This extension also has a dynamic interpretation. It can be defined directly, without a limiting procedure, using actions of \emph{generalized} characteristics defined via normal cones in the sense of non-smooth convex analysis (see Section~\ref{nonSmoothSection}).
Using John ellipsoids (see e.g. \cite[Section 10.12]{subdiff_1}) and compactness of (non-smooth) convex domains, there exists a global maximizer of $\rho_{\text{sys}}$. In particular, in view of the counterexample in \cite{counterexample}, there are smooth convex domains that do not satisfy Viterbo's conjecture which converge to a non-smooth maximizer. 
This makes understanding the dynamics on non-smooth convex domains particularly interesting.

A natural question is therefore to classify different types of dynamical behaviors of characteristics on the boundaries of non-smooth convex domains, and in particular recognize local maximizers of the systolic ratio. In light of the above discussion, one guiding question is how to extend the notion of a Zoll domain to the non-smooth setting.
As opposed to the smooth case, the property of being foliated by action-minimizing characteristics does not imply local maximality of $\rho_{\text{sys}}.$ For instance, the cube $Q = [0,1]^4$ is foliated by minimizers, yet $Q$ is far from being a local maximizer as one can decrease its volume by half while maintaining the same EHZ capacity.

In this paper, we offer a dynamical property which offers a natural extension of the Zoll property to non-smooth domains. We show that it is equivalent to the Zoll property in the smooth strictly convex case, and derive equivalences to other topological extensions of Zoll under certain hypotheses.
We then introduce new results regarding dynamical properties of action-minimizing closed characteristics on boundaries of non-smooth convex domains. 

\subsection{Dynamical and topological extensions of the Zoll property}

One approach to extending the Zoll property in the non-smooth setting uses the Ekeland--Hofer (EH) capacities \cite{hofer-ekeland} denoted $c_k(K)$ (or the Gutt--Hutchings capacities \cite{Gu-Ha} which were recently shown to coincide with the EH capacities \cite{GuttRamos}). Ginzburg, G\"{u}rel, and Mazzucchelli \cite{Gi-Gu-Ma} showed that for $K\subset \R^{2n}$ smooth and convex, $K$ is Zoll if and only if $c_1(K) = c_n(K)$. They also showed that this happens if and only if the Fadell-Rabinowitz (FR) index of the $S^1$-space of action-minimizing closed characteristics is $n$. Following Matijevi\'{c} \cite{Matijevic}, we call a not necessarily smooth convex $K \subset \R^{2n}$ generalized Zoll if $c_1(K) = c_n(K),$ which, as was shown ibid, is equivalent to the requirement that the FR index of action-minimizers is greater than or equal to $n$.
However, it remains unclear whether, for a general non-smooth domain, the generalized Zoll property implies that $\partial K$ is foliated by action minimizers.

\medskip

In this paper, we propose a second, more dynamical approach, which involves the behavior of $\ehzcap(K)$ under hyperplane cuts.
Following \cite{pazit}, the EHZ capacity is subadditive with respect to hyperplane cuts: if a hyperplane $H$ cuts $K$ into $K_1$ and $K_2$ (see Figure~\ref{additivityFig}) then 
$$\ehzcap(K) \leq \ehzcap(K_1) + \ehzcap(K_2).$$
On the other hand, for symplectic images of the ball, a result of Zehmish \cite{ball_add} implies \emph{superadditivity} (see also \cite{cant} for a different proof of this fact).
Motivated by this, we make the following definition.
\begin{definition}
We say that a convex body $K$ is \emph{cuts additive} if for every hyperplane $H$ cutting $K$ into $K_1$ and $K_2$ one has
\[
\ehzcap(K) = \ehzcap(K_1) + \ehzcap(K_2).
\]
\end{definition}

For a convex domain $K$, we say that the \emph{characteristic dynamics is well defined} if at every point $x \in \partial K$ passes only one characteristic up to time reparametrizations. This condition is satisfied if $K$ is smooth.
Our first result deals with this situation.

\begin{theorem}[Generalized Zoll implies Cuts Additive] \label{ZollIsAdditiveWellDefinedThm}
Let $K \subset \R^{2n}$ be a generalized Zoll convex domain for which the characteristic dynamics is well defined. Then $K$ is cuts additive. 
\end{theorem}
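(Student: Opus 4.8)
Since the subadditivity $\ehzcap(K) \le \ehzcap(K_1) + \ehzcap(K_2)$ for a hyperplane cut is already available from \cite{pazit}, it suffices to establish the reverse inequality $\ehzcap(K) \ge \ehzcap(K_1) + \ehzcap(K_2)$. The plan is to first convert the hypotheses into a statement about the dynamics on $\partial K$, and then use that dynamics to split a minimizing characteristic along the cutting hyperplane. For \emph{Step 1}, I would use Matijevi\'{c}'s characterization to rephrase the generalized Zoll condition $c_1(K)=c_n(K)$ as the statement that the Fadell--Rabinowitz index of the $S^1$-space of action-minimizing closed characteristics is at least $n$, and then combine this with the assumption that the characteristic dynamics is well defined to deduce that $\partial K$ is foliated by action-minimizing closed characteristics, each simple and of action exactly $c := \ehzcap(K)$. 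The point is that well-definedness forbids the ``branching'' of action-minimizers through a point (as happens, e.g., for the cube), so a large Fadell--Rabinowitz index cannot be supported on a lower-dimensional piece of $\partial K$; the evaluation map from the space of action-minimizers to $\partial K$ must then be surjective, and uniqueness upgrades this to a genuine foliation with constant action.

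For \emph{Step 2}, fix a hyperplane $H$ cutting $K$ into $K_1$ and $K_2$, write $F = K \cap H$ for the new face they share, let $\nu$ be a unit normal to $H$, and translate $K$ so that $0 \in H$. I would first record the elementary fact that a generalized characteristic moving inside the relative interior of $F$ travels along the single direction $J\nu$ and accumulates zero action, since for a straight segment with $\dot x \in \R J\nu$ and $x \in H$ the action density $\tfrac12\langle \dot x, Jx\rangle$ vanishes. Now take an action-minimizing closed characteristic $\gamma$ on $\partial K$ of action $c$; using the foliation from Step~1 one may choose $\gamma$ so that it meets $H$ in a controlled way. The key claim is that $\gamma$ decomposes, at its intersections with $H$, into an arc $\gamma^-$ contained in $K_1$ and an arc $\gamma^+$ contained in $K_2$, and that each such arc closes up to a closed generalized characteristic on $\partial K_1$, respectively $\partial K_2$, by a path running along $F$ and along the relative boundary $\partial K \cap H$ (where the normal cone of $K_i$ is strictly larger than that of $K$ and hence permits the required turning). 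Granting this, additivity of the action functional under concatenation, together with positivity of capacities and the fact that the closing paths carry no action, gives
\[
c = \mathcal{A}(\gamma) = \mathcal{A}(\gamma^-) + \mathcal{A}(\gamma^+) \ge \ehzcap(K_1) + \ehzcap(K_2),
\]
which is the desired inequality; here $\mathcal{A}(\gamma^\pm)$ is the action of the closed generalized characteristic obtained by closing up $\gamma^\pm$.

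The hard part is Step~2: controlling how an action-minimizing characteristic of the full body meets the cutting hyperplane, and showing that the face-traversals needed to close up the pieces genuinely cost no action and respect the (non-smooth) normal-cone constraints on $\partial K_1$ and $\partial K_2$. A characteristic need not cross $H$ transversally, may be tangent to it or run along $\partial K \cap H$, and in the non-smooth setting may pass through edges of $F$; handling these cases is exactly where the classification of dynamical behaviors and the $H^1$-compactness of the space of action-minimizers developed in the later sections are meant to enter. Step~1 is also delicate, since one must rule out the possibility that a large Fadell--Rabinowitz index is concentrated on a thin set of action-minimizers — this is where well-definedness of the dynamics is essential, and the cube shows that this hypothesis cannot be dropped.
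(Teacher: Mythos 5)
Your Step~1 matches the paper: generalized Zoll plus well-defined dynamics yields, via Matijevi\'{c}'s Theorem~1.7, that $\partial K$ is foliated by action minimizers. The problem is Step~2, which contains a genuine gap that is essentially a restatement of the theorem rather than a proof of it. You assume that an action minimizer $\gamma$ of $K$ meeting $H$ ``decomposes at its intersections with $H$'' into arcs that \emph{close up} to closed characteristics on $\partial K_1$ and $\partial K_2$ via zero-action paths. But the arc of $\gamma$ on one side of $H$ closes up to a characteristic of $K_1$ by a straight chord only if its two endpoints on $H$ differ by a vector in $\R_+ J\nu$ (so that the chord is a characteristic of the new flat face $F$, whose normal is $\nu$); by Proposition~\ref{additiveDescriptionThm} of the paper, the existence of a minimizer with exactly this property is \emph{equivalent} to cuts additivity for that hyperplane. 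For a general minimizer crossing $H$ the endpoints need not differ by a multiple of $J\nu$, and your fallback of closing up ``along $F$ and along $\partial K\cap H$'' is unjustified: such paths are neither automatically generalized characteristics of $K_i$ nor action-free, and producing a zero-action characteristic closing path is precisely the hard content.

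A structural symptom of the gap: your argument is purely local to a single cut and, if valid, would prove additivity for every cut of every foliated body with well-defined dynamics with no further input. The paper's corresponding Lemma~\ref{additive_or_double_lemma} shows that a single cut only yields a dichotomy --- either $\ehzcap(K)=\ehzcap(K_1)+\ehzcap(K_2)$ or $\ehzcap(K)=\ehzcap(K_1)=\ehzcap(K_2)$ (the latter occurring when no minimizer of either piece visits $H$, as for the cube) --- and its proof runs in the opposite direction to yours: it starts from a minimizer of a \emph{piece} $K_1$ that visits $H$, deletes its segment along $H$, completes the remaining arc to a minimizer $\eta$ of $K$ using the foliation and well-definedness, proves via a Clarke-duality rearrangement that the completion stays in $\partial K_2$, and closes it up to get $\ehzcap(K_2)\le\ehzcap(K)-\ehzcap(K_1)$. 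The theorem then needs a genuinely global step absent from your proposal: continuity of $\ehzcap$ in the cut forces the same branch of the dichotomy for all cuts, and the doubling branch is excluded by shrinking $K_1$ to arbitrarily small volume while keeping $\ehzcap(K_1)=\ehzcap(K)$, which would make the systolic ratio blow up. Without an argument selecting the additive branch, Step~2 cannot be completed as written.
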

\noindent We remark that in \cite{Matijevic} it is shown that if the characteristic dynamics is well defined then the generalized Zoll property is equivalent to requiring that $\partial K$ is foliated by action minimizing closed characteristics.

\medskip

The equivalence between Zoll and cuts additivity holds also in the other direction when restricting to the strictly convex case.
In this case, it is enough to test for cuts additivity only in a small neighborhood of the boundary (see the proof of Corollary~\ref{extremizerIsAdditiveCor}).

For describing it let us first give the following definition.
\begin{definition}
We say that a convex body $K$ is a \emph{cuts extremizer} if every hyperplane cut strictly decreases the EHZ capacity, i.e.\ for every such cut $K=K_1\cup K_2$ one has
\[
\ehzcap(K_1),\ehzcap(K_2) < \ehzcap(K).
\]
\end{definition}

\begin{corollary} \label{extremizerIsAdditiveCor}
    If $K$ is smooth, strictly convex, and cuts extremizer then $K$ is Zoll and cuts additive.
\end{corollary}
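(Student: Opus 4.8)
The plan is to prove the contrapositive of ``cuts extremizer $\Rightarrow$ Zoll'' and then obtain cuts additivity from Theorem~\ref{ZollIsAdditiveWellDefinedThm}. Since $K$ is smooth, through each point of $\partial K$ there is a unique characteristic leaf, and such a leaf coincides (as a set) with the image of an action-minimizing closed characteristic as soon as it meets one; hence $K$ is Zoll precisely when every point of $\partial K$ lies on an action-minimizing closed characteristic. So assume $K\subset\R^{2n}$ is smooth, strictly convex and a cuts extremizer, but \emph{not} Zoll, and fix $x_0\in\partial K$ lying on no action-minimizing closed characteristic; write $\Gamma\subset\partial K$ for the union of the images of all action-minimizing closed characteristics, so $x_0\notin\Gamma$.

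Next I would set up the family of boundary cuts at $x_0$. Let $\nu$ be the outer unit normal at $x_0$; for small $t>0$ put $H_t=\{y:\langle y-x_0,\nu\rangle=-t\}$, let $C_t=K\cap\{\langle y-x_0,\nu\rangle\ge -t\}$ be the cap at $x_0$, let $F_t=K\cap H_t$ be the flat facet, and let $K_1^t=\overline{K\setminus C_t}$. Strict convexity forces $\mathrm{diam}(C_t)\to 0$ as $t\to 0^+$ (this is the one place where strict convexity enters, and it is exactly what fails for the cube), so $C_t\cup F_t\subset B(x_0,\varepsilon_t)$ with $\varepsilon_t\to 0$, and for $0<t$ small both $C_t$ and $K_1^t$ are convex bodies. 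Hence $K_1^t\to K$ in the Hausdorff distance, so $\ehzcap(K_1^t)\to\ehzcap(K)$ by continuity of the EHZ capacity, while the cuts extremizer hypothesis gives $\ehzcap(K_1^t)<\ehzcap(K)$ for every such $t$.

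The core step is the following. Let $\gamma_t$ be an action-minimizing generalized closed characteristic on $\partial K_1^t$, so $\mathcal A(\gamma_t)=\ehzcap(K_1^t)<\ehzcap(K)$. First, $\gamma_t$ must meet the facet $\overline{F_t}$: otherwise its image lies in $\partial K_1^t\setminus\overline{F_t}$, an open subset of the smooth hypersurface $\partial K$ on which the characteristic line bundles of $\partial K_1^t$ and of $\partial K$ agree, so $\gamma_t$ would be a closed characteristic on $\partial K$ of action strictly below $\ehzcap(K)$, which is impossible. Pick $p_t\in\gamma_t(S^1)\cap\overline{F_t}$, so $|p_t-x_0|\le\varepsilon_t\to 0$. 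Now I invoke the $H^1$-compactness of action-minimizing closed characteristics for Hausdorff-convergent families of convex bodies (the compactness result established below, advertised in the abstract): along a subsequence $t_j\to 0$ the $\gamma_{t_j}$ converge in $H^1(S^1,\R^{2n})$, hence uniformly, to a closed characteristic $\gamma_*$ on the limit body $\partial K$ with $\mathcal A(\gamma_*)=\lim_j\ehzcap(K_1^{t_j})=\ehzcap(K)$, so $\gamma_*$ is action-minimizing on $\partial K$. But then $x_0=\lim_j p_{t_j}\in\gamma_*(S^1)\subset\Gamma$, contradicting $x_0\notin\Gamma$. Therefore $K$ is Zoll; being smooth and Zoll it satisfies $c_1(K)=c_n(K)$ (as cited above), hence is generalized Zoll, its characteristic dynamics is well defined, and Theorem~\ref{ZollIsAdditiveWellDefinedThm} yields that $K$ is cuts additive, proving Corollary~\ref{extremizerIsAdditiveCor}.

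The same analysis gives the locality remark: rerunning the dichotomy without the cuts extremizer hypothesis, for each small $t$ the minimizer $\gamma_t$ on $\partial K_1^t$ either avoids $\overline{F_t}$, in which case $\gamma_t\subset\partial K$ and $\ehzcap(K_1^t)=\mathcal A(\gamma_t)\ge\ehzcap(K)$, forcing $\ehzcap(K_1^t)=\ehzcap(K)$ by monotonicity, or it meets $\overline{F_t}$, which if it occurred along a sequence $t_j\to0$ would produce via the compactness argument an action minimizer on $\partial K$ through $x_0$ --- impossible when $x_0\notin\Gamma$; so for smooth strictly convex $K$ that is not Zoll, additivity already fails for caps arbitrarily close to $\partial K$, and it therefore suffices to test cuts additivity near the boundary. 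I expect the main obstacle to be the compactness step: $K_1^t$ is only piecewise smooth, with a facet $F_t$ and an edge $\partial K\cap H_t$ collapsing onto $x_0$, so $\gamma_t$ is a genuine generalized characteristic that may dwell on $F_t$ or on its edge, and one must control this to guarantee that the $H^1$-limit is an honest closed characteristic on the smooth hypersurface $\partial K$ carrying exactly the limiting action --- which is precisely the content of the $H^1$-compactness result and the reason it is isolated as a statement of independent interest.
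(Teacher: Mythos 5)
Your proposal is correct and follows essentially the same route as the paper's (very terse) proof: reduce to showing $K$ is Zoll via Theorem~\ref{ZollIsAdditiveWellDefinedThm}, argue by contradiction at a point $x_0$ not on any action minimizer, use strict convexity to make the cap of a nearby hyperplane cut small, and run the dichotomy (the minimizer of the large piece either avoids the facet, hence is a characteristic of $\partial K$ with action $\ge \ehzcap(K)$, or meets it, in which case a compactness argument produces an action minimizer through $x_0$). One small caveat: the compactness you invoke is not the paper's Theorem~\ref{compactQuotientTopologyThm} (which concerns a fixed body and the quotient modulo coisotropic faces); what is actually needed and suffices is the standard $C^0$-compactness of normalized action minimizers under Hausdorff convergence of the bodies (cf.\ \cite[Proposition~2.1]{Matijevic} and Clarke duality), which is also what the paper implicitly uses.
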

\begin{proof}
    In view of Theorem~\ref{ZollIsAdditiveWellDefinedThm} it is enough to show that $K$ is Zoll.
    
    Let $x \in \partial K$, and let $\gamma_x$ be the characteristic curve emanating from $x$. 
    If $\gamma_x$ is not an action minimizer closed characteristic, strict convexity implies that there is an open neighborhood $U \subset \partial K$ of $\gamma_x(t)$ which contains no action-minimizing closed characteristics.
    A sufficiently small hyperplane cut supported inside $U$ does not reduce the capacity, contradicting the cuts extremizer property.

\end{proof}

\noindent Example~\ref{pentagonEx} below shows that, in general, being cuts extremizer does not imply cuts additivity.

A consequence of Theorem~\ref{ZollIsAdditiveWellDefinedThm} and Corollary~\ref{extremizerIsAdditiveCor} is that cuts additivity is preserved under symplectomorphisms that keep $K$ smooth and strictly convex, although this does not follow directly from the definition.

Theorem~\ref{ZollIsAdditiveWellDefinedThm} suggests that cuts additivity serves as an alternative natural extension of Zoll for non-smooth domains.
Roughly speaking, the FR index of action minimizers can be regarded as a topological extension of Zoll to non-smooth domains, while cuts additivity provides a dynamical one. 
Indeed, cuts additivity admits a concrete dynamical interpretation (see Figure~\ref{additivityFig}). 
\begin{proposition} \label{additiveDescriptionThm}
Let $K \subset \R^{2n}$ be convex, and let $H$ be a hyperplane with unit normal $u$ which cuts $K$ into $K_1$ and $K_2$. Then cuts additivity with respect to this cut is equivalent to the existence of an action-minimizing closed characteristic $\gamma : S^1 \to \partial K$ that intersects $H$ twice at times $t_1<t_2$ and satisfies
    \[
        \gamma(t_2) - \gamma(t_1) \in \R_+\, J u.
    \]
    In this case, the restriction of $\gamma$ to $(t_1,t_2)$ glued with the straight segment $[\gamma(t_2),\gamma(t_1)]$ is an action-minimizing closed characteristic on $\partial K_1$, and the restriction of $\gamma$ to the complement of $(t_1,t_2)$ glued with the segment $[\gamma(t_1),\gamma(t_2)]$ is an action-minimizer for $\partial K_2$.
    
    Moreover, every pair of action-minimizing characteristics $\gamma_1,\gamma_2$ on $\partial K_1, \partial K_2$, up to translations, can be glued together to form an action-minimizing closed characteristic on $\partial K$.  
\end{proposition}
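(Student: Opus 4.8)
The plan is to reduce the statement to one bookkeeping identity for the action, together with the description of the generalized characteristic flow on the cut face $F:=H\cap K$. Write $\lambda=\tfrac12\sum_i(p_i\,dq_i-q_i\,dp_i)$, so the action of a loop is $\mathcal A(\gamma)=\int_\gamma\lambda$; recall that $\mathcal A$ is invariant under translating a closed loop, and that $\ehzcap$ of a convex body is the least action of a (positively traversed) generalized closed characteristic of its boundary. Translate so that $H$ contains the origin; then $\lambda$ vanishes on any segment lying in $H$ and parallel to $Ju$, so such segments contribute nothing to the action. On $\operatorname{relint}F$ the normal cone of $K_1$ is $\R_+u$ and that of $K_2$ is $\R_+(-u)$, so a generalized characteristic of $\partial K_1$ or $\partial K_2$ entering $\operatorname{relint}F$ runs there along straight segments parallel to $Ju$, each in the single Reeb direction of that face, and these two Reeb directions are opposite. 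Moreover, along the shared boundary $\partial K\cap\overline{K_i}$ one has $N_K\subseteq N_{K_i}$, strictly only on the rim $\partial K\cap H=\partial F$; hence every characteristic arc of $\partial K$ is admissible for $\partial K_i$, while characteristic arcs of $\partial K_i$ that never use the cut generator are characteristic arcs of $\partial K$. The identity is: if a loop $\gamma$ meets $H$ at $t_1<t_2$ with $\gamma((t_1,t_2))\subset\overline{K_1}$ and its complementary arc in $\overline{K_2}$, then $\tilde\gamma_1:=\gamma|_{[t_1,t_2]}\ast[\gamma(t_2),\gamma(t_1)]$ and $\tilde\gamma_2:=\gamma|_{S^1\setminus(t_1,t_2)}\ast[\gamma(t_1),\gamma(t_2)]$ satisfy $\mathcal A(\gamma)=\mathcal A(\tilde\gamma_1)+\mathcal A(\tilde\gamma_2)$, since the closing segment is traversed oppositely in the two loops (and, with $0\in H$, contributes nothing to either).

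The ``if'' direction and the ``in this case'' clause are then immediate. Given an action-minimizing $\gamma$ on $\partial K$ meeting $H$ at $t_1<t_2$ with $\gamma(t_2)-\gamma(t_1)\in\R_+Ju$: the two closing segments run in the Reeb direction of $F$ with respect to $K_1$ and to $K_2$ respectively, and since the two face normals are opposite these requirements coincide and (with the sign conventions of the statement) both read $\gamma(t_2)-\gamma(t_1)\in\R_+Ju$; so $\tilde\gamma_1,\tilde\gamma_2$ are nonconstant, positively traversed generalized closed characteristics of $\partial K_1,\partial K_2$ (the arcs inherit positive traversal from $\gamma$, the segments are Reeb segments). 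Hence $\mathcal A(\tilde\gamma_i)\ge\ehzcap(K_i)$, so $\ehzcap(K_1)+\ehzcap(K_2)\le\mathcal A(\gamma)=\ehzcap(K)$; combined with the hyperplane subadditivity of \cite{pazit} this is an equality --- i.e.\ cuts additivity with respect to this cut holds --- and the equality forces $\mathcal A(\tilde\gamma_i)=\ehzcap(K_i)$, so the two pieces are action-minimizers.

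Conversely, assume cuts additivity for this cut and let $\gamma_1,\gamma_2$ be action-minimizers on $\partial K_1,\partial K_2$. Each $\gamma_i$ must genuinely use the cut face $F$ (where the normal cone acquires the direction of $u$): otherwise $\dot\gamma_i\in JN_K$ a.e., making $\gamma_i$ a closed characteristic of $\partial K$ as well, whence $\ehzcap(K)\le\ehzcap(K_i)<\ehzcap(K_1)+\ehzcap(K_2)=\ehzcap(K)$ (using $\ehzcap(K_{3-i})>0$, since $K_{3-i}$ is a convex body), a contradiction. So each $\gamma_i$ splits into characteristic arcs on $\partial K\cap\overline{K_i}$ with rim endpoints, alternating with straight segments in $\operatorname{relint}F$ parallel to $Ju$. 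The crux is to show that, after a translation (as allowed in the statement) and using the opposite orientation forced by the opposite face normal, the $F$-segments of $\gamma_1$ coincide as sets with those of $\gamma_2$, traversed oppositely. Granting this, deleting those segments and concatenating the surviving $\partial K$-arcs in cyclic order yields a loop $\gamma$ on $\partial K$; it is a generalized characteristic because off the finitely many rim crossings $\dot\gamma=\dot\gamma_i\in JN_{K_i}=JN_K$, and those crossings are null in time. Since the $F$-segments contribute nothing (recall $0\in H$), the identity read backwards gives $\mathcal A(\gamma)=\mathcal A(\gamma_1)+\mathcal A(\gamma_2)=\ehzcap(K_1)+\ehzcap(K_2)=\ehzcap(K)$, so $\gamma$ is an action-minimizer; and by construction it meets $H$ at two times whose $\gamma$-difference is a positive multiple of the common $F$-segment direction, hence in $\R_+Ju$. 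This yields the ``only if'' direction and the ``moreover'' gluing claim at once.

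I expect the genuine obstacle to be the coincidence of the $F$-segments invoked above: it is a rigidity statement, and for an arbitrary cut nothing forces two minimizers of $\partial K_1$ even to have $F$-segments of equal length, so one must really exploit both action-minimality and cuts additivity. The natural route is to extend one $\partial K$-arc of $\gamma_1$ past its rim endpoint by the characteristic flow of $\partial K$ --- which at that point is forced to point into $\overline{K_2}$, since the incoming arc direction points out of $\overline{K_1}$ --- and to use the action identity and additivity to show this flow returns to $H$ precisely so as to close up along the $F$-segment of $\gamma_1$; this produces a crossing minimizer of $\partial K$ cutting back to $\gamma_1$, and rigidity of such minimizers then pins the $F$-segment down for all minimizers of either piece. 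Two lighter checks are that the concatenated curve genuinely satisfies the characteristic inclusion across the rim --- in particular that it really crosses $H$ there, so the arcs lie strictly in $\overline{K_1}$, resp.\ $\overline{K_2}$, off the rim --- and the combinatorial organization when a minimizer meets $F$ along several segments, which should reduce to the single-segment picture (equivalently, to selecting an admissible pair of crossing times $t_1<t_2$).
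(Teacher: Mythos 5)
Your first direction (existence of a splitting minimizer $\Rightarrow$ additivity, and minimality of the two pieces) is correct and is exactly the paper's argument: decompose the action, bound each piece below by $\ehzcap(K_i)$, and invoke the subadditivity of \cite{pazit} to force equality. The converse, however, has a genuine gap, and you have correctly located it yourself: the claim that the $F$-segments of $\gamma_1$ and $\gamma_2$ coincide (up to translation and orientation reversal), equivalently that the concatenation is an honest closed characteristic of $\partial K$, is precisely the content of the statement and is left unproven. The route you sketch --- extending a $\partial K$-arc of $\gamma_1$ past a rim point by ``the characteristic flow of $\partial K$'' --- is not available here: in the non-smooth setting there is no well-defined flow (the normal cone at a rim point can be high-dimensional), and nothing in your sketch explains how cuts additivity would force the extension to return and close up along the prescribed segment. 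As written, the only-if direction and the ``moreover'' clause are unproved.

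The paper closes this gap by a Clarke-duality rescaling argument that avoids any a priori matching of segments. Translate so $0\in H$; let $a_1,a_2$ be the lengths of the (connected, by \cite{pazit}) $H$-segments of $\gamma_1,\gamma_2$. Set $\tilde\gamma_1=a_2\gamma_1$ and $\tilde\gamma_2=a_1\gamma_2+x_0$, so both segments now have length $a_1a_2$ and can be glued into a single loop $\gamma$. Since $h_{K_i}(\dot\gamma_i)$ vanishes on the segments (as $0\in H$) and equals $h_K(\dot\gamma_i)$ off them, one gets $\Phi_K(\gamma)=a_2\Phi_{K_1}(\gamma_1)+a_1\Phi_{K_2}(\gamma_2)=a_2\,\ehzcap(K_1)^{1/2}+a_1\,\ehzcap(K_2)^{1/2}$, while Clarke duality gives $\Phi_K(\gamma)\ge(\mathcal A(\gamma)\,\ehzcap(K))^{1/2}=((a_1^2+a_2^2)\,\ehzcap(K))^{1/2}$. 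Substituting $\ehzcap(K)=\ehzcap(K_1)+\ehzcap(K_2)$ and squaring reduces this to $-(a_1\ehzcap(K_1)^{1/2}-a_2\ehzcap(K_2)^{1/2})^2\ge 0$, forcing equality throughout; hence $\gamma/\mathcal A(\gamma)$ is a Clarke minimizer, i.e.\ (after a translation, which one checks is orthogonal to $u$ and hence parallel to $H$) an action-minimizing closed characteristic of $\partial K$ that splits along $H$ as required. This is the missing idea: the additivity hypothesis enters exactly through the equality case of a Cauchy--Schwarz-type inequality for the dual functional, not through a dynamical rigidity of the flow.
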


\noindent This proposition can be viewed as studying the equality case in the proof of \cite[Theorem 1.8]{pazit}.

\begin{figure}[H]
    \centering
    \includegraphics[width=0.7\linewidth]{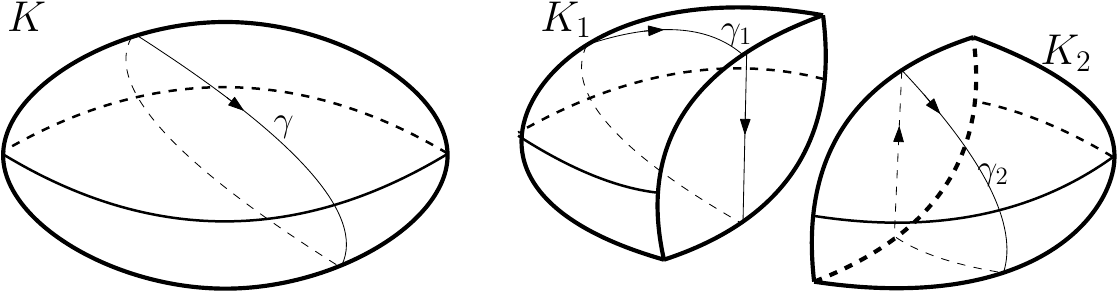}
    \caption{An illustration of a hyperplane cut which cuts an action-minimizing closed characteristic $\gamma$ into action minimizers $\gamma_1,\gamma_2$ for the respective parts.}
    \label{additivityFig}
\end{figure}

As a refinement of Theorem~\ref{ZollIsAdditiveWellDefinedThm}, we establish an implication in the opposite direction. Our next result proves that under certain hypotheses, cuts additivity implies the generalized Zoll property. This can be used to prove the generalized Zoll property for new classes of domains (cf. Example~\ref{ScapQEx}).

Let $K \subset \R^{2n}$ be a convex body. For every $v \in S^{2n-1}$ and $t \in (-h_K(-v), h_K(v))$, let $\Lambda_{v,t}$ be the space of action-minimizers that split into a pair of action minimizers for the respective pieces of a hyperplane cut with normal direction $v$ at height $t$ (cf. Proposition~\ref{additiveDescriptionThm}).

\begin{theorem}[Cuts Additive implies Generalized Zoll] \label{additiveIsZollNonSmoothThm}
    Let $K$ be a cuts additive convex domain. Assume that $K$ satisfies that for every $v \in S^{2n-1}$ and $t \in (-h_K(-v), h_K(v)),$ $\Lambda_{v,t}$ is contractible.
    Then $K$ is generalized Zoll.
\end{theorem}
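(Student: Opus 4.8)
The plan is to connect cuts additivity directly to the Fadell--Rabinowitz index via the structure of the space of action-minimizers. By the characterization in \cite{Matijevic}, it suffices to show that the FR index of the $S^1$-space $\Lambda_K$ of all action-minimizing closed characteristics on $\partial K$ is at least $n$. I would first set up the relevant $S^1$-space carefully, then use Proposition~\ref{additiveDescriptionThm} to exploit cuts additivity: for each direction $v \in S^{2n-1}$ and each height $t$, the set $\Lambda_{v,t}$ of minimizers that split under the corresponding cut is nonempty. The key observation is that this produces, for every $v$, a one-parameter family $\{\Lambda_{v,t}\}_{t}$ of nonempty subsets of $\Lambda_K$, and by the gluing half of Proposition~\ref{additiveDescriptionThm} these subsets interact coherently as $v$ and $t$ vary.

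The heart of the argument should be a dimension/index estimate. I would try to build, from the data $(v,t)$, a continuous $S^1$-equivariant map from (a space built out of) $S^{2n-1}$ into $\Lambda_K$, or dually, pull back the universal class. Concretely: fix a reference minimizer and, using that each $\gamma \in \Lambda_{v,t}$ meets the hyperplane $H_{v,t}$ twice at points differing by a positive multiple of $Jv$, one obtains a well-defined ``chord direction" map. The contractibility hypothesis on each $\Lambda_{v,t}$ is exactly what lets me make consistent choices: a continuous section over the sphere's worth of directions, because each fiber $\Lambda_{v,t}$ is contractible (hence has no obstruction to selection, and the selection is unique up to homotopy). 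This should yield an $S^1$-equivariant map whose existence forces $\mathrm{ind}_{\mathrm{FR}}(\Lambda_K) \geq n$, by comparing with the FR index of $S^{2n-1}$ with its Hopf $S^1$-action, which is $n$. Equivalently, one shows $c_1(K) = c_n(K)$ by identifying the FR-minimax critical values of the action functional over $\Lambda_K$ with the capacities and using that the sweep-out of directions realizes the top level.

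The main obstacle I anticipate is two-fold. First, making the ``chord direction / splitting" construction genuinely continuous and $S^1$-equivariant on the nose: the times $t_1 < t_2$ and the reparametrization of $\gamma$ must be chosen continuously, and action-minimizers on non-smooth boundaries need not be unique or vary smoothly, so one must work with the $H^1$-topology on the space of characteristics and invoke the compactness result advertised in the abstract to keep $\Lambda_K$ and the $\Lambda_{v,t}$ well-behaved (closed, and in the contractible case, genuinely contractible in a usable topology). Second, converting the existence of this equivariant map into the index inequality: one needs that the assignment $v \mapsto$ (a minimizer splitting in direction $v$) is, after passing to the appropriate quotient, degree-nonzero or otherwise detects the generator of $H^*_{S^1}$ in degree $2(n-1)$ — this is where the contractibility of the fibers is essential, as it collapses the ambiguity in the choice and lets the sphere $S^{2n-1}$ map through. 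I would handle this by a Fadell--Rabinowitz-index monotonicity argument: an $S^1$-map $S^{2n-1} \to \Lambda_K$ gives $\mathrm{ind}_{\mathrm{FR}}(\Lambda_K) \geq \mathrm{ind}_{\mathrm{FR}}(S^{2n-1}) = n$, and then cite \cite{Matijevic} (and \cite{Gi-Gu-Ma}) to conclude $c_1(K) = c_n(K)$, i.e.\ $K$ is generalized Zoll.
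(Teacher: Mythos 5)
Your proposal correctly identifies the key ingredients (the chord-direction construction, the role of the contractibility hypothesis, and the FR-index target), but the central mechanism is reversed relative to what actually works, and the step you rely on has a genuine gap. You propose to use contractibility of the fibers $\Lambda_{v,t}$ to choose a continuous, $S^1$-equivariant section $S^{2n-1} \to \mathrm{Sys}(K)$ and then invoke monotonicity of the FR index. Contractibility of the fibers of a continuous surjection does not by itself produce a continuous section: that requires a fibration-type hypothesis or a selection theorem with convex values, neither of which is available here ($\mathrm{Sys}(K)$ is a rough compact subset of $W^{1,2}$ and the chord-direction map has no local triviality). Worse, even granting a section, it could not be made $S^1$-equivariant: the fiber over $v$ consists of minimizers whose splitting chord based at $\gamma(0)$ points in direction $Jv$, and the time-translation action moves the basepoint, so the fibers are not $S^1$-invariant and there is no Hopf-equivariant comparison with $S^{2n-1}$ to be had.

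The paper's proof goes the other way. For small $\varepsilon$ one defines a continuous map $f_\varepsilon : \mathrm{Sys}(K) \to S^{2n-1}$ sending $\gamma$ to the normalized direction $-J\bigl(\gamma(g_\gamma^{-1}(\varepsilon)) - \gamma(0)\bigr)$, where $g_\gamma(t)$ is the (monotone) action of the loop $\gamma|_{[0,t]}$ closed up by a straight segment. Cuts additivity makes $f_\varepsilon$ surjective, and the hypothesis makes each fiber contractible. The Vietoris--Begle mapping theorem --- this is precisely why the paper works with Alexander--Spanier/\v{C}ech cohomology and $C^0$-compactness of $\mathrm{Sys}(K)$, not the $H^1$ topology --- then gives $\check{H}^*(\mathrm{Sys}(K)) \cong H^*(S^{2n-1})$, and the Gysin sequence for the free $S^1$-action computes the equivariant cohomology and yields $\mathrm{ind}_{\mathrm{FR}}(\mathrm{Sys}(K)) \geq n$, hence generalized Zoll by the criterion of Matijevi\'{c}. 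If you want to keep your outline, replace the selection step by Vietoris--Begle applied to the surjection with contractible fibers; no section and no equivariance of the map are needed.
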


\begin{remark}
    An additional possible extension of the Zoll property in the non-smooth case is the property that $\partial K$ is foliated by action minimizers, and for each $x \in \partial K,$ the set of action-minimizing closed charactersitcs passing through $x$ is contractible.
    A variant of the proof of Theorem~\ref{additiveIsZollNonSmoothThm} shows that this property implies generalized Zoll.
\end{remark}

\subsection{Non-smooth characteristics}

The characteristic dynamics on a non-smooth convex body $K \subset \R^{2n}$ present substantially more complicated behavior: the space of closed characteristics may be infinite dimensional, and closed characteristics may be very rough---possibly nowhere continuously differentiable.
Our next results show that closed characteristics which are action minimizers exhibit somewhat simpler behavior.

We thus wish to classify the different dynamical behaviors of closed characteristics $\gamma$ at different times $t \in S^1$, and it will be convenient to distinguish three cases. To make this exposition concise, we introduce these properties informally here and give their precise definitions in Section~\ref{nonSmoothSection}.

{\bf (i) Extreme rays:} 
The first property is that $-J\dg(t)$ lies on an extreme ray in the normal cone of $K$ at $\gamma(t).$
We prove in Lemma~\ref{c0Impliesc1Lemma} that, roughly speaking, following $J$ times an extreme ray keeps the velocity stable under small $C^0$ perturbations, making the dynamics resemble the smooth case.
Hence this classification is useful when analyzing dynamical and topological properties of characteristics.

{\bf (ii) Coisotropic faces:} 
The second property is that $\gamma(t)$ moves along a flat face $E$ of $\partial K$ which is coisotropic, and $-J\dg(t)$ is confined to the normal vectors of $E$.
This movement along the face may entail complicated behavior, but in many applications it can be replaced by a straight segment along the face.
In this situation we say that $\gamma$ \emph{moves along the coisotropic face} $E$ at time $t$.

{\bf (iii) Isotropic gliding:}
The last property occurs when $-J\dg(t)$ moves along a time-varying isotropic subspace of the normal cone. We call such a situation \emph{isotropic gliding}. Since $\dg(t)$ is not necessarily continuous, isotropic gliding can arise in a complicated way in many situations, even if, for example, $K$ is a polytope. However, in many cases we do not expect action-minimizing closed characteristics to exhibit isotropic gliding.

The following theorem shows that these three properties completely characterize action-minimizers.

\begin{theorem}[Classification of systoles dynamics]\label{nonSmoothClassificationThm}
    Let $\gamma$ be an action-minimizing closed characteristic. Then, for almost every $t$, either $\dg(t)$ lies on an extreme ray in the normal cone, or $\gamma(t)$ moves along a flat coisotropic face, or $\gamma(t)$ exhibits isotropic gliding.
\end{theorem}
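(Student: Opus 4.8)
The plan is to reduce the theorem to a pointwise statement about the vector $w(t):=-J\dg(t)$ and its position inside the normal cone $N_t:=N_K(\gamma(t))$, and then to use minimality of the action to exclude the single remaining possibility. The soft part, which already accounts for the bulk of case~(iii), goes as follows. As recalled in Section~\ref{nonSmoothSection}, an action-minimizing closed characteristic is Lipschitz, so for a.e.\ $t$ the derivative $\dg(t)$ exists and, by definition of a generalized characteristic, $w(t)\in N_t$, a pointed closed convex cone. Since $\gamma$ takes values in $\partial K\subset K$ and is differentiable at such $t$, the expansion $\gamma(t\pm s)=\gamma(t)\pm s\,\dg(t)+o(s)$ forces both $\dg(t)$ and $-\dg(t)$ into the tangent cone $T_{\gamma(t)}K$; hence $\dg(t)$ lies in its lineality space, which by polarity of tangent and normal cones equals $(\operatorname{span}N_t)^{\perp}$. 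Writing $S_t:=\operatorname{span}N_t$ and combining this with $w(t)=-J\dg(t)$ and $w(t)\in N_t\subseteq S_t$, a short computation gives
\[
w(t)\in L_t:=S_t\cap J S_t^{\perp}\subseteq S_t ,
\]
and one checks that $L_t$ is an isotropic subspace of $\R^{2n}$. Thus at a.e.\ $t$ the normal direction $w(t)$ sits inside a (generally $t$-dependent) isotropic subspace of the span of the normal cone.

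This turns the trichotomy into a classification of the position of $w(t)$ in $N_t$. Let $G_t$ be the smallest face of $N_t$ with $w(t)\in\operatorname{relint}G_t$, and let $F_t$ be the face of $\partial K$ carrying $\gamma(t)$. If $\dim G_t=1$ then $w(t)$ lies on an extreme ray of $N_t$: case~(i). If $\dim G_t\ge2$ and $\gamma$ moves, at time $t$, along the flat face $E:=F_t$ with $E$ coisotropic — that is, $\dg(t)\in V_E$, the linear direction of $\operatorname{aff}E$, and $V_E^{\omega}\subseteq V_E$, which then forces $\dg(t)\in V_E^{\omega}$, a characteristic leaf direction of $E$ — this is case~(ii). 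In the remaining situation the isotropic subspace $L_t$ genuinely varies with $t$, since it is locally constant at most where $\gamma$ is confined to a fixed face, a configuration already covered by cases~(i)--(ii); hence $w(t)\in L_t$ realizes isotropic gliding, case~(iii). The three behaviors thus exhaust a.e.\ $t$ except for one scenario: a positive-measure set of times at which $\dim G_t\ge2$, $\gamma$ is not moving along a coisotropic flat face, and yet $\operatorname{span}G_t$ is \emph{not} isotropic. Excluding this is where action-minimality must be used.

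To carry out the exclusion — the step I expect to be the main obstacle — suppose that on a set $S$ of positive measure one has $w(t)\in\operatorname{relint}G_t$ with $\operatorname{span}G_t$ containing a symplectic two-plane $P_t=\operatorname{span}\{a_t,b_t\}$, $\omega(a_t,b_t)\neq0$. By relative interiority, $w(t)+\varepsilon\xi\in G_t\subset N_K(\gamma(t))$ for every $\xi\in P_t$ with $|\xi|\le1$ and $\varepsilon$ small, so the velocity of $\gamma$ may be deformed while staying in the admissible cone. Localizing near a Lebesgue density point of $S$ and integrating a mean-zero deformation of $w$ along the $P_t$-directions over a short time interval should produce an admissible competitor for the (Clarke) dual action functional with action strictly below $\ehzcap(K)$, contradicting that $\gamma$ is action-minimizing. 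Two points need care: (a) the deformed curve's position drifts, so its normal cone may shrink and the deformed velocity may fail to be admissible — this is the delicate part, which I would handle by first splitting $S$ into the portion where $\gamma$ is confined to a fixed face (so $N_K$ is constant along $\gamma$ and the deformation is manifestly admissible, and the perturbation is clean) and the complementary ``transient'' portion, which is treated by perturbing the dual variable directly rather than $\gamma$; and (b) the quantitative action-decrease estimate, in which the nondegeneracy $\omega(a_t,b_t)\neq0$ is used essentially to make the leading variation of the action nonzero. Finally, the times with $\dg(t)=0$ are harmless for the statement — there $w(t)=0$ lies in every coisotropic-face normal space and in any isotropic subspace, so such $t$ fall vacuously under~(ii)--(iii) — which completes the trichotomy for a.e.\ $t$.
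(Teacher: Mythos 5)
Your reduction of the theorem to excluding one scenario --- $-J\dg(t)$ in the relative interior of a face $G_t$ of $N_K(\gamma(t))$ whose span is not isotropic --- matches the structure of the paper: the paper derives the trichotomy as an immediate consequence of Theorem~\ref{extremeOrIsotropicThm}, whose content is exactly that exclusion. (Your ``soft part'' showing $-J\dg(t)\in S_t\cap JS_t^{\perp}$ is correct but is not the decisive input: in the bad-edge example of Remark~\ref{polytopeGlidingRemark}, $-J\dg$ lies in a one-dimensional isotropic $L_t$ while sitting in the interior of a non-isotropic three-dimensional cone, so membership in $L_t$ does not control the face $G_t$.) The problem is that the exclusion step --- which you yourself flag as the main obstacle --- is not actually carried out, and the sketch you give would fail as stated. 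First, the direction of the variational argument is wrong: in Clarke's dual principle every loop satisfying the constraint $h_K(-J\dot\gamma')=2$ a.e.\ has action \emph{at most} that of the normalized minimizer, so producing a competitor ``with action strictly below $\ehzcap(K)$'' yields no contradiction; and in the primal problem a smaller-action competitor is only a contradiction if it is itself a closed characteristic on $\partial K$, which your additive deformation of $w(t)$ does not produce. Second, the admissibility issue you raise in point (a) is precisely what sinks the additive perturbation $w\mapsto w+\varepsilon\xi$: the perturbed curve drifts off $\partial K$, and even before that, the perturbation changes $h_K(w(t)+\varepsilon\xi(t))$, so the dual constraint is violated. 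Your proposed fix (``perturbing the dual variable directly'' on the transient portion) is not an argument.

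The paper's Lemma~\ref{nonExtremeIsIsotropicLemma} resolves both difficulties at once by choosing a different perturbation: writing $-J\dg=a\,u+(1-a)v$ with $h_K(u)=h_K(v)=2$ on a small interval around a Lebesgue point, one \emph{reorders} the velocity there (first follow $Ju$ for total time $\tau_1$, then $Jv$ for total time $\tau_2$, and also the opposite order). Since the constraint $h_K(-J\dot\gamma')=2$ depends only on the velocity --- not on the position of the curve or on any normal-cone condition at the curve's own points --- it is preserved exactly, so both reorderings are admissible dual competitors; their actions differ from $\mathcal A(\gamma)$ by $\pm\varepsilon^2 a(1-a)\,\omega(u,v)+o(\varepsilon^2)$ with opposite signs, and the two inequalities from Clarke duality force $\omega(u,v)=0$. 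If you replace your additive deformation by this reordering argument (and keep your case analysis of $G_t$), the proof goes through; as written, the key step is missing.
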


The following proposition shows that, in some situations, action-minimizers do not exhibit isotropic gliding, which makes our classification much simpler. 

\begin{proposition}\label{noIsotropicGlidingThm}
    Let $K \subset \R^{2n}$ be either
        \begin{enumerate}
            \item a convex polytope, or
            \item a convex Lagrangian product.
        \end{enumerate}
        Then $K$ does not admit an action minimizer closed characteristic with isotropic gliding.
\end{proposition}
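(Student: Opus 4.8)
The plan is to argue by contradiction: suppose $\gamma$ is an action-minimizing closed characteristic on $\partial K$ that exhibits isotropic gliding on a positive-measure set of times, and show this contradicts either the structure of the polytope / Lagrangian product or the minimality of the action. The key observation to exploit is that, by Theorem~\ref{nonSmoothClassificationThm}, the complement of the gliding set decomposes into extreme-ray motion and coisotropic-face motion, and by Lemma~\ref{c0Impliesc1Lemma} the extreme-ray portions are ``rigid'' under $C^0$ perturbations. Isotropic gliding, by contrast, carries genuine extra freedom: one can perturb $\gamma$ within the isotropic directions of the normal cone without changing the constraint $-J\dg(t)$ lies in the normal cone, and — crucially — such a perturbation, chosen to be a closed loop, changes the action by a controlled amount while keeping the curve a generalized characteristic. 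The strategy is to use this freedom to strictly decrease the action, contradicting that $\gamma$ is a minimizer.

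\medskip

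First I would set up the two cases separately, since the geometry of the normal cones differs. In the polytope case, the normal cone at any point is a fixed polyhedral cone spanned by the outer normals of the facets containing that point; isotropic gliding means $\gamma(t)$ lies on a face $F$ whose normal cone $N_F$ contains a nontrivial isotropic subspace $W$ (with respect to $\omega$), and $-J\dg(t)$ traces a path inside $W$. Because there are finitely many faces, I can pass to a single face $F$ on which gliding occurs for a set $T$ of positive measure, and assume $W \subset N_F$ is isotropic of dimension $\ge 1$. The point is that $JW$ is then an isotropic subspace tangent to $F$ along which we may translate $\gamma|_T$; since $W$ is isotropic, the symplectic form restricted to the plane spanned by $\dg(t)$ and any $w \in JW$ vanishes, so we may reparametrize/modify the trajectory within the affine subspace $F + JW$ without violating the characteristic condition. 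In the Lagrangian product case $K = K_1 \times K_2 \subset \R^n \times \R^n$ with $\R^n \times \{0\}$ and $\{0\}\times\R^n$ Lagrangian, the normal cone at $(x_1,x_2)$ is $N_{K_1}(x_1) \times N_{K_2}(x_2)$, and $J$ swaps the two factors; an isotropic direction in the normal cone must lie within one Lagrangian factor, so isotropic gliding forces the trajectory to ``stall'' in one factor while the normal data varies there — which I would show is incompatible with the known billiard description of characteristics on Lagrangian products (the action-minimizing characteristic is a closed billiard trajectory in $K_1$ with respect to the norm gauged by $K_2$, and such trajectories have piecewise-linear, non-gliding structure).

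\medskip

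The core computation, in both cases, is a variational estimate: given the gliding portion, I would construct an explicit competitor loop $\tilde\gamma$ agreeing with $\gamma$ off a small time interval, equal to a straight chord (or a straight chord pushed slightly into $K$ using the isotropic freedom) across the gliding region, and compute that the action $\mathcal{A}(\tilde\gamma) = \tfrac12\oint \langle -J\tilde\gamma, \dot{\tilde\gamma}\rangle\,dt$ satisfies $\mathcal{A}(\tilde\gamma) \le \mathcal{A}(\gamma)$, with strict inequality unless the gliding set has measure zero. The mechanism is that replacing a gliding arc by a chord never increases action — this is essentially the convexity/Clarke-duality estimate underlying subadditivity in \cite{pazit} — while the isotropic direction is exactly the direction in which the boundary is ``flat enough'' that the chord genuinely lies in $K$ (for a polytope, the chord along $JW$ stays in the face $F$; for a Lagrangian product, the chord stays in the stalled factor), so the competitor is admissible. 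Then either $\tilde\gamma$ has strictly smaller action — contradiction — or it has equal action, in which case one iterates/takes a limit to conclude the gliding set is null.

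\medskip

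I expect the main obstacle to be the admissibility and regularity bookkeeping for the competitor curve: a generalized closed characteristic is only $H^1$ (indeed possibly nowhere $C^1$), so I must verify that splicing in the chord yields again a genuine generalized characteristic in the normal-cone sense, not merely a Lipschitz curve, and that the ``times of entry/exit'' of the gliding region are handled measure-theoretically (the gliding set $T$ need not be an interval). The $H^1$-compactness result alluded to in the abstract should help control the limiting argument in the equality case. A secondary subtlety is ruling out isotropic gliding that is ``spread out'' — occurring on a Cantor-like set of times across many faces — where no single face hosts positive-measure gliding; here I would argue that by finiteness of faces (polytope case) or by the product structure (Lagrangian case) one can still localize to a single face after subdividing, so this is a technical rather than conceptual difficulty.
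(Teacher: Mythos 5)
Your central mechanism does not work, and it obscures what the proposition actually asserts. You propose to replace a gliding arc by a chord and claim this strictly decreases the action unless the gliding set is null. But if $-J\dg$ takes values in an isotropic subspace $W$ on an arc, then $\omega$ vanishes identically on the space of velocities $JW$, so the action contribution $\tfrac12\iint\omega(\dg(t),\dg(s))\,ds\,dt$ of that arc is the same for the original arc and for the chord: the replacement is action-neutral, never action-decreasing, and no contradiction with minimality can be extracted this way. In fact the paper's proof derives no contradiction at all; it shows that wherever the velocity lies in a higher-dimensional isotropic subspace, the curve in fact \emph{moves along a coisotropic face} in the sense of Definition~\ref{coisotropicFaceDef} (an open interval with $\gamma\subset E$ and $\dg\in JN_K(E)$), so the second clause of Definition~\ref{isotropicGlidingDef} fails and the putative gliding set cannot exist. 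What you dismiss as a ``secondary subtlety'' --- gliding spread over a Cantor-like set interleaved with other faces --- is precisely where action-minimality enters and is the heart of the polytope case: one needs finiteness of \emph{face transitions} (Corollary~\ref{finiteFacesLemma}), proved via the Clarke-duality crossing arguments of Lemmas~\ref{normalsCrossLemma} and~\ref{finiteNormalsLemma}, to localize $\gamma$ to the relative interior of a single face on an interval before the coisotropic face $E$ can be exhibited. Finiteness of the set of faces alone does not give this, and Remark~\ref{polytopeGlidingRemark} shows the statement is genuinely false without minimality (via a fat Cantor set), so any argument whose only use of minimality is an action comparison that cannot be made strict is doomed.

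The Lagrangian-product half also rests on a false structural claim: in $T_1\times T_2$ an isotropic subspace of the normal cone need \emph{not} lie in a single factor, since the span of $(u,0)$ and $(0,v)$ is isotropic whenever $\langle u,v\rangle=0$, and this mixed case is exactly the one that must be handled. The paper's argument uses the Minkowski-billiard description together with the Bezdek--Bezdek-type characterization of minimizers as polygonal curves with at most $n+1$ vertices to confine $\pi_q\dg$ and $\pi_p\dg$ to fixed normal cones $N_{T_2}(p^*)$ and $N_{T_1}(q^*)$ on overlapping intervals; isotropy then forces $p^*\perp q^*$, so the motion is again along a coisotropic face. Your appeal to billiard trajectories having ``piecewise-linear, non-gliding structure'' assumes the conclusion rather than proving it.
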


\begin{remark}
    Without the action-minimizing requirement, convex polytopes and convex Lagrangian products do admit characteristics with isotropic gliding (see Remark~\ref{polytopeGlidingRemark}).
\end{remark}

\medskip \medskip

We now establish a compactness result for the set of action-minimizers.

The space of action-minimizing closed characteristics is contained in the space of absolutely continuous functions with square integrable derivatives, equipped with the $H^1$ topology induced by the natural sobolev norm
\[
\|\gamma\|_{H^1} = \left( \int_0^1 \bigl(\|\gamma(t)\|^2 + \|\dot{\gamma}(t)\|^2\bigr)\, dt \right)^{1/2},
\]
and the associated metric $d_{H^1}(\gamma_1,\gamma_2) := \|\gamma_1 - \gamma_2\|_{H^1}.$
We denote by
$$\|\gamma\|_{C^0} := \max_{t \in S^1} \|\gamma(t)\|$$
the $C^0$ norm. 

In the non-smooth setting, in contrast to the smooth case, the set of action-minimizers may fail to be compact in the $H^1$ topology (see \cite[Proposition 2.1 and Proposition 2.2]{Matijevic}).
Indeed, when moving along a coisotropic face $E$, the velocities of a sequence $\gamma_k$ may oscillate arbitrarily fast between $J$ times different normals of $E$ in such a way that no subsequence converges in the $H^1$ topology. By contrast, this space is compact with respect to the $C^0$ topology.
We show a certain compactness result in the $H^1$ topology after collapsing motions along coisotropic faces.

This is described via an equivalence relation $\sim$ on the set of action-minimizing closed characteristics, and we refer to Definition~\ref{coisotropicCollapseDef} below for the precise formulation. 
Intuitively, two closed characteristics are equivalent if, along coisotropic parts of the boundary, we forget the specific way in which they move inside the coisotropic faces, retaining only the endpoints and the non-coisotropic portions.
We consider the quotient space $Q$ of action-minimizers modulo $\sim$, equipped with the $H^1$ quotient pseudo-metric $d_\sim$ (see Definition~\ref{pseudoMetricDef} below).

\begin{theorem}[$H^1$-compactness of the space of systoles] \label{compactQuotientTopologyThm}
    Let $K$ be a convex domain that does not admit action-minimizers with isotropic gliding.
    Then the quotient space $Q$, equipped with the topology induced by $d_\sim$, is compact. Moreover, this topology coincides with the $C^0$ quotient topology.
\end{theorem}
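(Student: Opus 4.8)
The plan is to first establish $C^0$-compactness of the space of action-minimizers, then transfer this to the quotient, and finally show that the quotient $C^0$ topology agrees with the one induced by $d_\sim$. For the $C^0$-compactness: an action-minimizing closed characteristic $\gamma$ on $\partial K$ satisfies $\|\dg(t)\|$ bounded in terms of the geometry of $K$ (the velocity is $J$ times an element of the normal cone, and the normalization fixing the period $1$ together with the value $\ehzcap(K)$ of the action constrains its size), so the $\gamma_k$ are uniformly Lipschitz; combined with the fact that all $\gamma_k$ lie in the compact set $\partial K$, Arzel\`a--Ascoli yields a uniformly convergent subsequence $\gamma_k \to \gamma_\infty$ in $C^0$. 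One then checks that $\gamma_\infty$ is again an action-minimizing closed characteristic: the action functional is continuous in the $C^0$ topology on uniformly Lipschitz loops (it is the integral of $\tfrac12\langle -J\gamma,\dg\rangle$, and the derivatives converge weakly-$*$ in $L^\infty$), so the action of $\gamma_\infty$ is still $\ehzcap(K)$, and the generalized-characteristic condition $-J\dg_\infty(t)$ lying in the normal cone $N_K(\gamma_\infty(t))$ passes to the limit by upper semicontinuity of the normal cone together with weak-$*$ convergence of derivatives and Mazur's lemma. This shows the full space of action-minimizers is $C^0$-compact.

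Next I would pass to the quotient. The equivalence relation $\sim$ collapses the way a characteristic traverses a coisotropic face, keeping only entry/exit points and the non-coisotropic portions; by Theorem~\ref{nonSmoothClassificationThm} together with the hypothesis that $K$ admits no action-minimizers with isotropic gliding, almost every time is either an extreme-ray time or a coisotropic-face time, so each $\sim$-class has a well-defined ``reduced'' data consisting of the extreme-ray arcs plus finitely-or-countably many straight chords across coisotropic faces. The quotient map $\pi$ is continuous from the $C^0$-topology to the $C^0$-quotient topology by definition, so $Q = \pi(\{\text{action-minimizers}\})$ is compact in the $C^0$-quotient topology as a continuous image of a compact set.

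Finally, to identify the topology induced by $d_\sim$ with the $C^0$-quotient topology: the map $\pi$ is also continuous for $d_\sim$ (because $d_\sim$ is defined as an infimum of $H^1$-distances over representatives, and one can bound it above by a $C^0$-type quantity after the coisotropic collapse — along extreme-ray arcs the velocity is $C^0$-stable by Lemma~\ref{c0Impliesc1Lemma}, so $C^0$-closeness there upgrades to $H^1$-closeness of suitable representatives, while coisotropic portions are replaced by their chords and contribute a controlled amount), hence the identity $Q_{C^0} \to Q_{d_\sim}$ is continuous; and conversely $d_\sim$ dominates the $C^0$ quotient pseudo-metric, so the identity in the other direction is continuous as well. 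A continuous bijection from a compact space to a Hausdorff space (one must check $d_\sim$ is a genuine metric on $Q$, i.e.\ separates distinct classes — this is where the no-isotropic-gliding hypothesis is used again, to ensure that $d_\sim(\pi\gamma_1,\pi\gamma_2)=0$ forces $\gamma_1 \sim \gamma_2$) is a homeomorphism, giving both the compactness in the $d_\sim$-topology and the coincidence of the two topologies.

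The main obstacle I expect is the step upgrading $C^0$-closeness to $H^1$-closeness of appropriate representatives after the coisotropic collapse: this is precisely the place where fast oscillation of velocities inside a coisotropic face destroys $H^1$-compactness before quotienting, and one must verify that (a) Lemma~\ref{c0Impliesc1Lemma} gives enough control on the extreme-ray portions that their derivatives converge in $L^2$ (not merely weak-$*$), and (b) the reduced representatives of a $C^0$-convergent sequence can be chosen so that the chords across coisotropic faces converge in $H^1$, i.e.\ the combinatorial structure of which faces are crossed and in what order eventually stabilizes along a subsequence. Handling (b) carefully — showing the number and arrangement of coisotropic crossings cannot blow up in the limit — is the technical heart of the argument.
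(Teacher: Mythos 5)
Your overall skeleton matches the paper's: extract a $C^0$-convergent subsequence (the paper cites \cite[Proposition~2.1]{Matijevic} rather than re-running Arzel\`a--Ascoli), then upgrade $C^0$-convergence to $d_\sim$-convergence by handling extreme-ray times via Lemma~\ref{c0Impliesc1Lemma} and coisotropic times via the collapse. But the step you yourself flag as ``the technical heart'' --- producing representatives that witness $d_\sim([\gamma_k],[\gamma])\to 0$ --- is genuinely missing, and your proposed route to it (showing that ``the combinatorial structure of which faces are crossed and in what order eventually stabilizes along a subsequence'') is not how it can be made to work and is not what the paper does. The paper never needs the faces visited by the $\gamma_k$ to stabilize. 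Instead it works entirely relative to the \emph{limit}: it covers the coisotropic time-set $\Omega$ of $\gamma$ up to small measure by finitely many maximal intervals $(a,b)$ with associated faces $E$, and on each such interval it defines an explicit representative $\tilde\gamma_k$ whose velocity is the projection $\mathrm{proj}_{JN_K(E)}(\dot\gamma_k)$ corrected by a constant $\zeta$ so that the endpoints match; outer semicontinuity of the normal cone makes this projection uniformly close to $\dot\gamma_k$, which is what gives $d_{H^1}(\gamma_k,\tilde\gamma_k)\to 0$. Your chord-based reduction does not obviously yield representatives that are $H^1$-close to the $\gamma_k$ themselves, which is what the chain in Definition~\ref{pseudoMetricDef} requires.

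A second, related omission: for $\tilde\gamma_k$ to be $\sim$-equivalent to something $H^1$-close to $\gamma$, the limit must \emph{strictly} move along its coisotropic faces on those intervals (Definition~\ref{coisotropicCollapseDef} is phrased in terms of $\Omega_L$, the strict version), and in general it need not. The paper resolves this with Lemma~\ref{coisotropicFaceMeansStrictlyLemma}, perturbing $\gamma$ to a nearby minimizer $\gamma'_k$ that strictly moves inside (a possibly different) coisotropic face and avoids extreme rays of $N_K(E')$; without this the equivalence class of the limit can be too small for the substitution of the coisotropic motion to be legal. Your proposal does not address this point at all. Finally, your appeal to ``continuous bijection from a compact space to a Hausdorff space'' needs $d_\sim$ to be a genuine metric on $Q$, which is not established (and not needed): since both topologies come from pseudo-metrics, the paper's sequential argument --- every $C^0$-convergent sequence of minimizers has $d_\sim$-convergent classes --- already gives both compactness and the coincidence of the two quotient topologies.
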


\begin{remark}
    In the $C^0$ topology each equivalence class is compact, and hence the $C^0$ quotient topology coincides with the topology induced by the $C^0$ quotient pseudo-metric.
\end{remark}

\subsection{Examples}

\begin{example}[The 24-cell]\label{24Ex}
The 24-cell $X$ is the convex polytope with vertices
$$ (\pm1,0,0,0), (0,\pm1,0,0), (0,0,\pm1,0),(0,0,0,\pm1),(\pm\frac{1}{2},\pm\frac{1}{2},\pm\frac{1}{2},\pm\frac{1}{2})$$
and it is one of the six regular polytopes in $\R^4.$ 

In \cite{Ch-H} it is shown that $\rho_{\text{sys}}(X) = 1$. 
It is not known if the interior of $X$ is symplectomorphic to the interior of the ball.

The dual body $X^\circ$ of $X$ is also a 24-cell, in a different position, with vertices $\{\pm e_i \pm e_j\}_{i,j \in \{1,2,3,4\}}$. It also has systolic ratio 1. 
If the interiors of both $X$ and $X^\circ$ are symplectomorphic to the interior of the ball, this gives a particularly symmetric polytopal realization of a symplectic ball in which both a body and its polar are symplectic balls.

As discussed in \cite{Ch-H}, there are no Lagrangian faces in $\partial X$, hence the dynamics is well-defined. Moreover, it is foliated by action-minimizing closed characteristics. Using \cite[Theorem 1.7]{Matijevic} and Theorem~\ref{ZollIsAdditiveWellDefinedThm}, we conclude that $X$ is generalized Zoll and cuts additive.
\end{example}

\begin{example}[Intersection of a simplex and a cube]\label{ScapQEx}
    It is shown in \cite{pazit-thesis-msc} that the standard simplex $S$ maximizes the systolic ratio among simplices in $\R^4$ (which no longer holds in higher dimensions). Its systolic ratio is $\rho_{\text{sys}}(S) = \frac{3}{4}.$

    There are no closed characteristics with minimal action passing through vertices different from the origin. Thus, one can attempt to cut $S$ using hyperplanes parallel to the coordinate axes until a new minimizer appears.
    The resulting domain is 
    $$Y = S \cap [0,\frac{1}{2}]^4,$$
    and it satisfies $\rho_{\text{sys}}(Y) = 1.$
    It is again unknown whether its interior is symplectomorphic to the interior of the ball.
    Note that $Y$ is the positive orthant of the $24$-cell, which also has systolic ratio 1. If both are symplectic balls, this would give an interesting example of a full packing by $16$ balls.

    On $Y$ the characteristic dynamics is not well-defined, and proving that $Y$ is generalized Zoll is more delicate. For instance, the approach in \cite[Proposition~4.2.2]{Matijevic}, which uses a bijection from $\partial Y$ to a subset of $\mathrm{Sys}(Y)$, does not apply here. Indeed, consider points of the form $(0,\varepsilon_1,0,\frac{1}{2}-\varepsilon_2), (\varepsilon_1, 0, \varepsilon_2, \frac{1}{2}) \in \partial Y$. When $\varepsilon_1 > \varepsilon_2 > 0$ the dynamics is well-defined, but the corresponding closed characteristics are genuinely different, and as $\varepsilon_1,\varepsilon_2 \to 0$ they do not converge to the same loop.

    The following proposition uses the combinatorial description of cuts additivity from Lemma~\ref{combinatorialCutLemma} below, together with Theorem~\ref{additiveIsZollNonSmoothThm}.   Its proof involves analyzing different combinatorial cases. It appears in the appendix of the paper, where we show the relevant methods and describe some of these cases.

    \begin{proposition}\label{simplexCapCubeIsZollThm}
        The body $Y$ is generalized Zoll.
    \end{proposition}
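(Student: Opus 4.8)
The plan is to deduce the statement from Theorem~\ref{additiveIsZollNonSmoothThm}. Its two hypotheses — that $Y$ be cuts additive, and that $\Lambda_{v,t}$ be contractible for every $v\in S^{3}$ and every $t\in(-h_Y(-v),h_Y(v))$ — together with Proposition~\ref{additiveDescriptionThm} (which identifies cuts additivity in direction $v$ at height $t$ with the nonemptiness of $\Lambda_{v,t}$) reduce the whole proposition to a single assertion: for every such pair $(v,t)$, the space $\Lambda_{v,t}$ is nonempty and contractible. I would establish this by an explicit, symmetry-reduced description of the action-minimizing closed characteristics on $\partial Y$ and of the way they interact with hyperplane cuts.

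The first step is to pin down $Y$ and its systoles combinatorially. Writing $S=\{x\in\R^4_{\ge 0}:\sum_i x_i\le 1\}$, the body $Y=S\cap[0,\tfrac12]^4$ is the polytope whose facets are $\{x_i=0\}$, $\{x_i=\tfrac12\}$ for $i=1,\dots,4$, and $\{\sum_i x_i=1\}$, with a symmetry group containing the coordinate permutations. Since $Y$ is a polytope, Theorem~\ref{nonSmoothClassificationThm} and Proposition~\ref{noIsotropicGlidingThm} say that every action-minimizer on $\partial Y$ is, after collapsing its coisotropic portions, a concatenation of finitely many straight segments with directions of the form $Ju$, $u$ a facet normal of $Y$; thus it is encoded by a finite cyclic combinatorial word together with the locations of the breakpoints on $\partial Y$. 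Combining this with the known value $\rho_{\text{sys}}(Y)=1$ (hence an explicit value of $\ehzcap(Y)$) and the combinatorial formula for the EHZ capacity of a polytope, I would extract the finite list of admissible words and, for each, the locus of breakpoints realizing it as an action-minimizer.

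With this list in hand I would invoke Lemma~\ref{combinatorialCutLemma}: for a polytope, cuts additivity in direction $v$ is controlled by a finite combinatorial condition — the existence of an admissible minimizer meeting the cutting hyperplane at two of its breakpoints whose difference lies in $\R_+\,Jv$ — so the set of all hyperplane cuts decomposes into finitely many combinatorial types, indexed by the incidence pattern of the hyperplane with the facets of $Y$ (with a continuous height parameter inside each type). In each type, after reducing by the symmetries of $Y$, I would exhibit the required splitting minimizer, giving $\Lambda_{v,t}\neq\emptyset$, and then describe $\Lambda_{v,t}$ itself. In the generic types the characteristic dynamics near the relevant part of $\partial Y$ is well defined, so $\Lambda_{v,t}$ is a single Reeb orbit up to time reparametrization — an interval, hence contractible. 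In the degenerate types — for instance near the points $(0,\varepsilon_1,0,\tfrac12-\varepsilon_2)$ and $(\varepsilon_1,0,\varepsilon_2,\tfrac12)$ of Example~\ref{ScapQEx}, where genuinely different families of minimizers coexist — $\Lambda_{v,t}$ is a union of such orbit-intervals glued along the coisotropic endpoints forced by the splitting constraint, and I would check that this gluing pattern is a tree (typically just a segment or a star), so that $\Lambda_{v,t}$ deformation retracts to a point. The reason contractibility is plausible even when the full space of minimizers through a point is complicated is that, for fixed $(v,t)$, the splitting condition of Proposition~\ref{additiveDescriptionThm} is a convex-type restriction on the breakpoint data.

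The main obstacle is the length and care of this case analysis rather than any single conceptual step: correctly enumerating the minimizer words on $\partial Y$ (the dynamics is not well defined, so the boundary-to-systole correspondence used in \cite[Proposition~4.2.2]{Matijevic} is unavailable and the coisotropic segments must be tracked by hand), and then checking nonemptiness and contractibility of $\Lambda_{v,t}$ uniformly over all hyperplane positions, including the limiting heights where the cut degenerates into a facet of $Y$. I expect the most delicate point to be ruling out non-contractibility in the branching types — showing that the glued family of orbit-intervals never closes up into a loop — which is where the convexity of the splitting constraint and the symmetry reduction do the real work. Organizing the argument around the symmetry-reduced list of facet-incidence patterns and relegating the routine sub-cases to the appendix keeps it manageable.
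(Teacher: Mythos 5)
Your overall strategy is the paper's: reduce to Theorem~\ref{additiveIsZollNonSmoothThm}, use Lemma~\ref{combinatorialCutLemma} to turn the nonemptiness and contractibility of the splitting spaces into a finite combinatorial verification, and grind through cases up to the symmetries of $Y$. The execution differs in two ways worth noting. First, the paper does not enumerate minimizers geometrically as words-with-breakpoints on $\partial Y$; it classifies the \emph{maximizing sequences} in $M(Y)$ for the capacity formula~\eqref{formulaEq} (two explicit one-parameter branches in $b_s$, plus finitely many ``cross'' patterns), and the space of combinatorial cuts is then a concrete semialgebraic set of coefficient vectors whose contractibility (intervals, two intervals glued at $b_s=2$, convex families of Lagrangian splittings) can be read off directly. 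Your breakpoint-locus enumeration is equivalent in principle via Lemma~\ref{combinatorialCutLemma}, but tracking the coisotropic segments by hand is exactly the part that the algebraic formulation is designed to avoid. Second, and more substantively: you commit to verifying nonemptiness and contractibility of $\Lambda_{v,t}$ for \emph{every} height $t\in(-h_Y(-v),h_Y(v))$. The paper only establishes combinatorial cuts, and contractibility, for all sufficiently small $t>0$ (cuts near the boundary), because that is all the proof of Theorem~\ref{additiveIsZollNonSmoothThm} actually uses: the map $f_\varepsilon$ there is built from a small fixed $\varepsilon=\ehzcap(K_1)$, so only the fibers corresponding to shallow cuts need to be surjected onto and be contractible. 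Additivity of $Y$ for deep cuts is not proved anywhere in the paper (and the pentagon example, Figure~\ref{fig:pentagonNotAdditiveExample}, shows that for related polytopes additivity can genuinely fail at intermediate heights even when it holds near the boundary), so your plan as literally stated risks attempting to verify something that may be false. Restricting to a neighborhood of the boundary, as in the remark preceding Corollary~\ref{extremizerIsAdditiveCor}, removes this risk and shortens the case analysis considerably; with that adjustment your outline matches the paper's proof.
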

    
\end{example}

\begin{example}[The pentagons product]\label{pentagonEx}
    Let $P \subset \R^2$ be the regular pentagon, and let $T$ be its rotation by $90^\circ$. In \cite{counterexample} we showed that
    \[
      \rho_{\text{sys}}(P \times T) = \frac{\sqrt{5}+3}{5} > 1.
    \]
    Numerical experiments suggest that $P \times T$ is cuts exremizer, yet not cuts additive. 
    More precisely, for a specific randomly-picked normal direction $u$, let $K_1(t),K_2(t)$ be the corresponding cut pieces of a hyperplane cut with normal $u$ at height $t$. The way the capacities $\ehzcap(K_1), \ehzcap(K_2)$ and their sum vary with $t$ is depicted in Figure~\ref{fig:pentagonNotAdditiveExample}.
    In particular, this shows that $P \times T$ is not cuts additive and
    $$c_\gamma(P \times T) > \ehzcap(P\times T).$$
    We do not know whether it is generalized Zoll.

    \begin{proposition}\label{pentagonProp}
        Let $v_1,\ldots,v_5$ be the vertices of $P$, and $w_1,\ldots,w_5$ be the corresponding vertices of $T$, so that the segment $[v_i,v_{i+1}]$ is perpendicular to $[w_i,w_{i+1}]$ for every $i$, where indices are considered modulo 5.
        For all directions of the form $u=(u_p, u_q),$ where $u_p \in N_P(v_i)$ and $u_q \in N_T(w_j)$ for $j \in \{i-1,i,i+1\}$, one has that $P \times T$ is cuts additive for hyperplane cuts in direction $u$ in some neighborhood of the boundary. In particular, one has additivity near the boundary for all normals of the form $(u_p,0)$ and $(0,u_q).$
    \end{proposition}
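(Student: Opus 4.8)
The plan is to apply the dynamical characterization of cuts additivity, Proposition~\ref{additiveDescriptionThm}: for a hyperplane $H$ with unit normal $u=(u_p,u_q)$, additivity across $H$ is equivalent to the existence of an action-minimizing closed characteristic $\gamma=(p(\cdot),q(\cdot))$ on $\partial(P\times T)$ meeting $H$ at two times $t_1<t_2$ with $\gamma(t_2)-\gamma(t_1)\in\R_+Ju$. Since $Ju=(-u_q,u_p)$, this asks for $p(t_2)-p(t_1)=-\lambda u_q$ and $q(t_2)-q(t_1)=\lambda u_p$ for some $\lambda>0$; for a product normal $(u_p,0)$ it becomes $p(t_2)=p(t_1)$ and $q(t_2)-q(t_1)\in\R_+u_p$, and dually for $(0,u_q)$. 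It therefore suffices to produce, for each $u$ in the stated family and each hyperplane with normal $u$ sufficiently close to the $u$-supporting hyperplane of $P\times T$ (on either side), such a pair $(\gamma,t_1<t_2)$.

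The first ingredient is the explicit description of the action-minimizers of $P\times T$ from \cite{counterexample}: each is of generalized $(P,T)$-billiard type, alternating intervals on which $p$ rests on $\partial P$ while $q$ runs along a segment in a normal direction of $P$ there, with intervals on which $q$ rests on $\partial T$ while $p$ runs along a segment in an inward normal direction of $T$. By Proposition~\ref{noIsotropicGlidingThm} there is no isotropic gliding, so Theorem~\ref{nonSmoothClassificationThm} confines every non-coisotropic velocity to an extreme ray of the normal cone; e.g.\ when $p$ rests at a vertex $v_i$, the companion $q$-segment points along an edge-normal $\nu_{e_{i-1}}$ or $\nu_{e_i}$ of $P$ at $v_i$. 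The perpendicularity hypothesis $[v_i,v_{i+1}]\perp[w_i,w_{i+1}]$ enters precisely to make $e_i\times T$, $P\times f_i$ and the Lagrangian $2$-face $e_i\times f_i$ (with $e_i=[v_i,v_{i+1}]$, $f_i=[w_i,w_{i+1}]$) coisotropic faces along which the minimizer may travel, $p$ sliding along $e_i$ exactly while $q$ rests on $f_i$ (and symmetrically); likewise $\{v_i\}\times T$ and $P\times\{w_j\}$ are Lagrangian, hence coisotropic. Finally, using the $\mathbb{Z}/5$-symmetry of $P\times T$ given by simultaneous $72^\circ$-rotation of the two $\R^2$-factors together with the reflection symmetries, the minimizers collectively come arbitrarily close to every configuration in which $p$ lies near $v_i$ and, at the same time, $q$ lies near $w_{i-1},w_i$ or $w_{i+1}$ --- and, crucially, to no configuration with $p$ near $v_i$ and $q$ near $w_j$ for $|i-j|\ge2$; this combinatorial fact, read off from the explicit minimizer, is the origin of the restriction $j\in\{i-1,i,i+1\}$.

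Granting this, for a general $u=(u_p,u_q)$ in the family and a hyperplane $H=\{\langle x,u\rangle=c\}$ with $c$ near $h_{P\times T}(u)=h_P(u_p)+h_T(u_q)$, the $u$-supporting face is $F_P^{u_p}\times F_T^{u_q}$, where $F_P^{u_p}$ is $v_i$ or an edge of $P$ at $v_i$, and $F_T^{u_q}$ is $w_j$ or an edge of $T$ at $w_j$, $j\in\{i-1,i,i+1\}$. I would pick a minimizer $\gamma$ whose trajectory comes as close as possible to this face, let $t_1<t_2$ be the two $H$-crossings on the billiard segments bracketing that closest approach, and compute $\gamma(t_2)-\gamma(t_1)$ segment by segment as $c\uparrow h_{P\times T}(u)$: the displacement accumulated while $p$ rests on $F_P^{u_p}$ contributes a positive multiple of $u_p$ (using the extreme-ray/coisotropic confinement), the displacement while $q$ rests on $F_T^{u_q}$ contributes a nonnegative multiple of $-u_q$, and the remaining transverse discrepancy between the two bracketing billiard segments at $H$ --- which is parallel to $F_P^{u_p}$, resp.\ $F_T^{u_q}$ --- vanishes because those segments are collinear (a reflection property of the minimizer at the supporting face) and cross $H$ at the same point once $c$ is close enough to the boundary. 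This gives $\gamma(t_2)-\gamma(t_1)\in\R_+Ju$, hence additivity across $H$ by Proposition~\ref{additiveDescriptionThm}; letting $c$ range over a one-sided neighborhood of $h_{P\times T}(u)$ and repeating for $-u$ yields the asserted neighborhood of the boundary. For a product normal $u=(u_p,0)$ (resp.\ $(0,u_q)$) the cut is a product cut $(P_1\times T)\cup(P_2\times T)$, the thin piece is a thin cap of $P$ at $v_i$ or $e_i$ times $T$, only the $u_p$-contribution of the computation is present, and one gets $\gamma(t_2)-\gamma(t_1)\in\R_+(0,u_p)=\R_+Ju$.

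The main obstacle is the geometric input of the third paragraph: one must know the minimal characteristic of $P\times T$ concretely enough to verify (i) the adjacency statement that near $v_i$ the minimizer's $q$-part visits only $w_{i-1},w_i,w_{i+1}$, and (ii) the collinearity / common-crossing-point property that kills the transverse discrepancy and, in the product case, forces $p(t_1)=p(t_2)$. This amounts to a finite case analysis over the combinatorial types of the $u$-supporting face, up to the $\mathbb{Z}/5$ and reflection symmetries, carried out from the description in \cite{counterexample}; the restriction $j\in\{i-1,i,i+1\}$ is forced exactly here, since for $|i-j|\ge2$ no minimizer supplies a chord near the corresponding face --- consistent with the numerical evidence that $P\times T$ is cuts extremizer but not cuts additive.
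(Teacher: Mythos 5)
Your overall strategy --- invoking Proposition~\ref{additiveDescriptionThm} and producing, for each hyperplane near the boundary, an action minimizer with a chord in direction $\R_+Ju$ --- is a legitimate dynamical route, and it differs from the paper, which instead works entirely combinatorially via the polytope capacity formula and Lemma~\ref{combinatorialCutLemma}, exhibiting explicit maximizing sequences in $M(P\times T)$ that split into two blocks. However, your argument has a genuine gap at its quantitative core. Writing $u=\alpha_1 n_i+\alpha_2 n_{i+1}+\alpha_3\widetilde n_i+\alpha_4\widetilde n_{i+1}$ in terms of the edge normals, the chord $\gamma(t_2)-\gamma(t_1)$ of a minimizer near the supporting face is a \emph{positive combination of the four vectors} $Jn_i,Jn_{i+1},J\widetilde n_i,J\widetilde n_{i+1}$ with coefficients determined by how long the trajectory follows each normal; for the chord to be parallel to $Ju$ these four coefficients must be in the exact ratio $\alpha_1:\alpha_2:\alpha_3:\alpha_4$. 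A single fixed minimizer will not do this for arbitrary $u$ in the two-dimensional normal cones: you must select from the continuum of minimizers (the paper's one-parameter family indexed by $x\in[0,a]$, tuned to $x=c\alpha_2$, together with a further free parameter in the second block) and verify that the tuned configuration is still a global maximizer. Your sketch asserts the chord decomposition ``segment by segment'' without ever confronting this tuning, which is precisely where the work lies.

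A second omission: whether the small piece $K_1$ actually admits a minimizer realizing the glued chord depends on the cyclic order in which the four normals are traversed, and the correct order switches according to the sign of $\alpha_1\alpha_4-\alpha_2\alpha_3$ (in the paper this appears as the requirement that the first block's action term $A_1=C(\alpha_1\alpha_4-\alpha_2\alpha_3)$ be positive, with a separate construction in the opposite case). Your case analysis ``over the combinatorial types of the supporting face'' does not register this dichotomy, nor the degenerate product-normal case $\alpha_3=\alpha_4=0$, which in the paper requires a genuinely different maximizing sequence in which the coefficient of the cut direction is not small. Until these selections and case splits are carried out --- which is essentially the finite verification you defer to at the end --- the proof is a plausible plan rather than a proof.
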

    
\end{example}

\begin{figure}[H]
    \centering
    \includegraphics[width=0.8\linewidth]{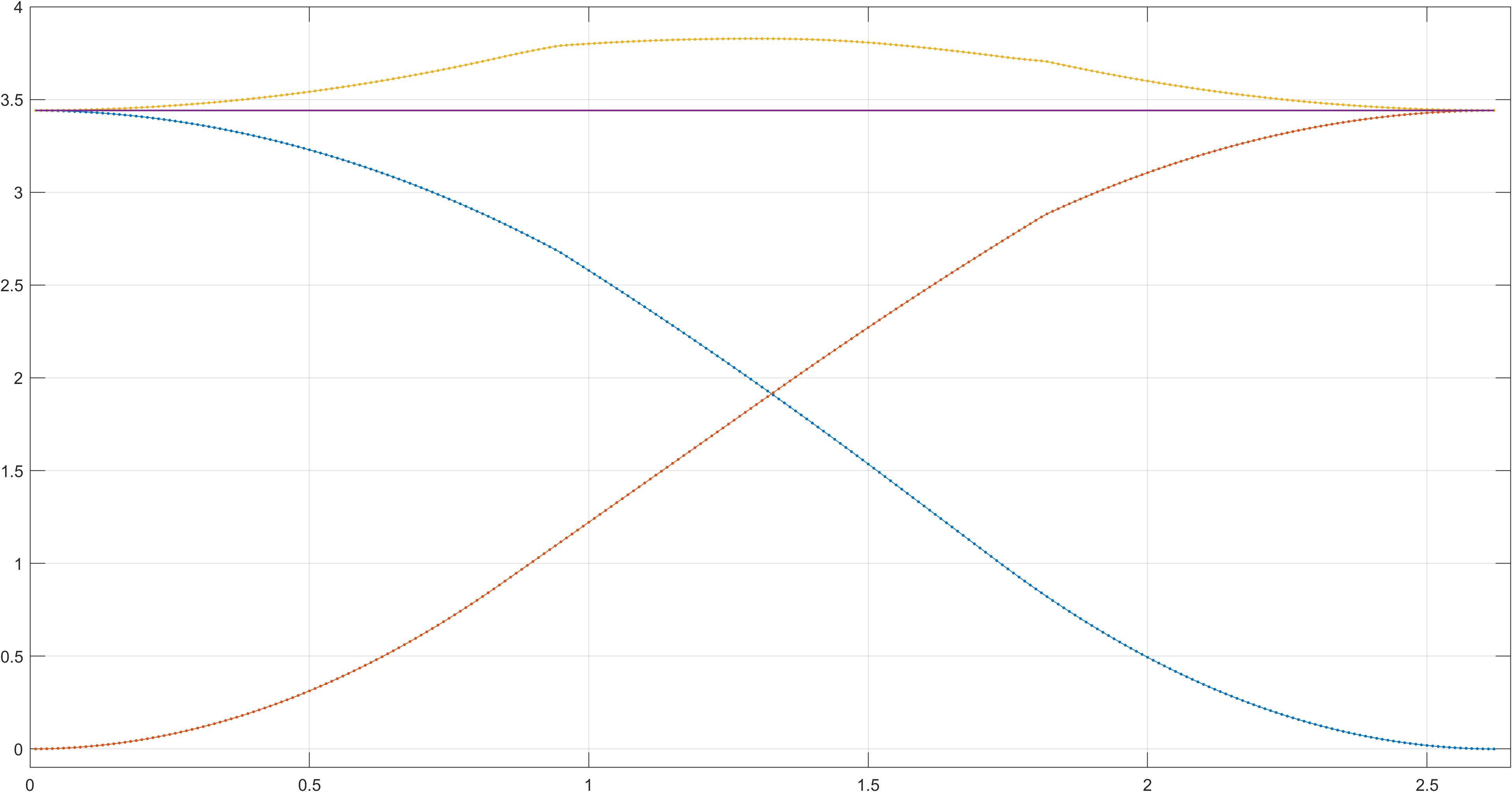}
    \caption{The blue and orange lines describe $\ehzcap(K_2), \ehzcap(K_1)$ respectively, while the yellow line is their sum. The purple line is the capacity of the pentagon product $\ehzcap(P \times T)$. }
    \label{fig:pentagonNotAdditiveExample}
\end{figure}

\begin{example}[Rotation of a cross polytope]
    \label{rotatedCrosspolytopeEx}
    As another symmetric candidate for a symplectic ball, let
    \[
      P = \mathrm{conv}\{\pm e_i\}_{i=1}^4 \subset \R^4
    \]
    be the standard cross-polytope, and set
    \[
    A = \begin{pmatrix}
     1 & 0 & 0 & 0 \\
     0 & \tfrac{\sqrt{2}}{2} & 0 & -\tfrac{\sqrt{2}}{2} \\
     0 & \tfrac{1}{\sqrt{3}} & \tfrac{1}{\sqrt{3}} & \tfrac{1}{\sqrt{3}} \\
     0 & \tfrac{1}{\sqrt{6}} & -\sqrt{\tfrac{2}{3}} & \tfrac{1}{\sqrt{6}}
    \end{pmatrix}.
    \]
    One checks that $AP$ has systolic ratio equal to $1$.

    Denote by $(v_i)_{i=1}^4$ the column vectors of $A$. Then $A$ is a rotation matrix satisfying
    \[
      \omega(v_i, v_j) = \pm \frac{1}{\sqrt{3}}
      \quad \text{for every } i\neq j \in\{1,2,3,4\}.
    \]
    One verifies that there are no Lagrangian faces in $\partial(AP)$ and that $\partial (AP)$ is foliated by action-minimizers. Hence $AP$ is generalized Zoll and cuts additive.

    It is unknown whether this rotation of the cross-polytope is symplectomorphic to the ball.
\end{example}

\begin{remark}
    One can numerically verify that Example~\ref{ScapQEx} satisfy $c_1(Y)=c_2(Y)$ by slightly perturbing the facets and then applying the algorithm from~\cite{Ch-H}. For the perturbed polytope the only closed characteristics with Conley--Zehnder index $5$ have actions extremely close to those of the corresponding minimizer. With additional work, this strategy should yield a rigorous proof that this body is generalized Zoll.
    For perturbations of $P \times T$ in Example~\ref{pentagonEx}, however, this straightforward approach breaks down. In that case there are six essentially distinct action levels for closed characteristics of Conley--Zehnder index $5$, and one of them lies very close to the minimal action. As a result, the same numerical procedure does not provide a clear guess as to whether $P \times T$ is generalized Zoll.

\end{remark}

\subsection{Further discussion on the Zoll property}

An additional related property of convex domains which may be regarded as an extension of the Zoll property to non-smooth domains is the coincidence of $\ehzcap(K)$
and the diameter $c_{\gamma}(K)$ of the spectral norm. 
The spectral norm of a Hamiltonian diffeomorphism is defined via a homological min-max process on filtered Floer homology. It was introduced by Viterbo \cite{Viterbo-specGF} via generating functions, while the Floer-theoretic counterparts were developed in \cite{Schwarz2000, Oh2005, oh-spectral}. The \emph{diameter of the spectral norm} 
\[
c_{\gamma}(K) := \sup\{\gamma(\phi_H)\}
\]
over all Hamiltonian isotopies $H$ supported in $K$, was first considered in \cite{frauenfelderSchlenk}, and studied as a symplectic capacity in \cite{AAC}
 where, using the coincidence between the symplectic homology capacity and the EHZ capacity which was proved independently in \cite{abbondandolo-kang} and \cite{irie}, they showed that the ball $B^{2n}(1)$ satisfies
\[
\ehzcap(B^{2n}(1)) = c_{\gamma}(B^{2n}(1)).
\]
This observation suggests an additional extension of the Zoll property in the non-smooth setting.

The following statement about a (not necessarily smooth) convex body is a simple consequence of Theorem~1 in \cite{cant} together with the subadditivity from~\cite{pazit} and we omit its proof.

\begin{thm}[\cite{cant, pazit}]\label{c_gamma_implies_additive_thm}
    Let $K \subset \R^{2n}$ be convex with $\ehzcap(K) = c_{\gamma}(K)$. Then $K$ is cuts additive.
\end{thm}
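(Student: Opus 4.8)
\medskip
\noindent\emph{Sketch of the argument.}
Fix a hyperplane $H$ cutting $K$ into convex bodies $K_1$ and $K_2$. By the subadditivity of the EHZ capacity under hyperplane cuts from \cite{pazit} one already has $\ehzcap(K)\le\ehzcap(K_1)+\ehzcap(K_2)$, so the entire content is the reverse inequality $\ehzcap(K)\ge\ehzcap(K_1)+\ehzcap(K_2)$.

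The reverse inequality comes from Theorem~1 of \cite{cant}, in the form of superadditivity of the spectral diameter under a hyperplane cut of a convex body,
\[
c_{\gamma}(K)\ \ge\ c_{\gamma}(K_1)+c_{\gamma}(K_2).
\]
If this is not literally the stated form of \cite[Theorem~1]{cant}, it follows from the estimate there by a routine shrinking-and-limiting argument: for small $\delta>0$ replace $K_1,K_2$ by slightly shrunk convex copies $K_1^{\delta},K_2^{\delta}$ (scaling each piece toward one of its interior points), so that $K_1^{\delta}$ and $K_2^{\delta}$ are disjoint compact subsets of $\mathrm{int}\,K$ separated by a thin slab around $H$; choose compactly supported Hamiltonians $F_i$ in $\mathrm{int}\,K_i^{\delta}$ with $\gamma(\phi_{F_i})$ arbitrarily close to $c_{\gamma}(K_i^{\delta})$; since $\phi_{F_1}\phi_{F_2}$ is supported in $\mathrm{int}\,K$ one has $\gamma(\phi_{F_1}\phi_{F_2})\le c_{\gamma}(K)$, while the mechanism of \cite{cant} gives $\gamma(\phi_{F_1}\phi_{F_2})\ge\gamma(\phi_{F_1})+\gamma(\phi_{F_2})$ because the supports of $F_1,F_2$ lie on opposite sides of a hyperplane; hence $c_{\gamma}(K)\ge c_{\gamma}(K_1^{\delta})+c_{\gamma}(K_2^{\delta})$, and letting $\delta\to0$ with continuity of $c_{\gamma}$ on convex bodies (for the Hausdorff metric) yields the displayed bound. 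Feeding in the standard comparison $c_{\gamma}(L)\ge\ehzcap(L)$ for convex bodies $L$ (see \cite{AAC, abbondandolo-kang, irie}) together with the hypothesis $\ehzcap(K)=c_{\gamma}(K)$ closes the loop:
\[
\ehzcap(K)\;=\;c_{\gamma}(K)\;\ge\;c_{\gamma}(K_1)+c_{\gamma}(K_2)\;\ge\;\ehzcap(K_1)+\ehzcap(K_2)\;\ge\;\ehzcap(K),
\]
the last step being again \cite{pazit}. Every inequality is therefore an equality; in particular $\ehzcap(K)=\ehzcap(K_1)+\ehzcap(K_2)$, and since $H$ was an arbitrary cut, $K$ is cuts additive. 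As a byproduct the chain also forces $c_{\gamma}(K_i)=\ehzcap(K_i)$ for $i=1,2$, so the property $\ehzcap=c_{\gamma}$ descends to both pieces of every hyperplane cut.

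The only genuinely nontrivial ingredient is \cite[Theorem~1]{cant}; once it is granted, the proof is merely the sandwich above, which is why the statement can be recorded without proof. The delicate points all lie inside that input: one needs the true \emph{additivity} of the spectral norm for a hyperplane-separated pair of Hamiltonians (not just the automatic triangle-inequality \emph{subadditivity}), its stability under the degeneration $\delta\to0$ in which $K_1^{\delta},K_2^{\delta}$ collapse onto the common face $K\cap H$, and a fixed choice of ambient space in which all the spectral diameters are computed — say, a large ball containing $K$, or $\R^{2n}$ — so that monotonicity of $c_{\gamma}$ applies to every inclusion used.
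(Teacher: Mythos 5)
Your argument is exactly the one the paper has in mind: it records the statement as ``a simple consequence of Theorem~1 in \cite{cant} together with the subadditivity from~\cite{pazit}'' and omits the proof, and your sandwich $\ehzcap(K)=c_{\gamma}(K)\ge c_{\gamma}(K_1)+c_{\gamma}(K_2)\ge \ehzcap(K_1)+\ehzcap(K_2)\ge \ehzcap(K)$ is precisely that deduction, with the shrinking-and-limiting step supplying the needed technical bridge to Cant's separated-supports statement. The proposal is correct and follows the same route as the paper.
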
 

\medskip \medskip

We have the following conjectural picture in the smooth case.
\begin{conjecture} \label{smoothMaximizerEquivalenceConj}
    Let $K \subset \R^{2n}$ be smooth and convex, then the following conditions are equivalent.
    \begin{enumerate}
        \item $K \subset \R^{2n}$ is symplectomorphic to the ball.
        \item $K$ is a local maximizer of the systolic ratio with respect to $C^2$ perturbations.
        \item $K$ is a local maximizer of the systolic ratio with respect to the Hausdorff distance.
        \item $\partial K$ is foliated by action minimizers.
        \item $K$ is cuts additive.
        \item $K$ is cuts extremizer.
        \item $\ehzcap(K) = c_{\gamma}(K)$.
        \item $\ehzcap(K) = c_n(K).$
        \item The FR index of the action-minimizers is $n$.
    \end{enumerate}
\end{conjecture}

\begin{remark}
    It is known that Conjecture~\ref{smoothMaximizerEquivalenceConj}, apart from Property 3, holds in $\R^4$ for strictly convex domains.
    
    Indeed, 
    Monotonicity implies that a $C^2$ local maximizer must be a cuts extremizer, which by Corollary~\ref{extremizerIsAdditiveCor} yields cuts additivity and the Zoll property.
The fact that Zoll domains are local maximizers with respect to the $C^2$ topology in all dimensions is shown in \cite{ABHS, abbondandoloBenedetti, abbondandolo-benedetti-edtmair}. 

It is shown in \cite{Gi-Gu-Ma} that $K$ is Zoll if and only if $\ehzcap(K) = c_n(K)$, and that this happens if and only if the FR index of the action-minimizers equals $n$ (see also \cite{Matijevic}). 

Theorem~\ref{c_gamma_implies_additive_thm} implies that $\ehzcap(K) = c_\gamma(K)$ forces cuts additivity.

    It is known that all of the above properties hold for the ball, and all other implications are due to the result in \cite{ABHS} which states that in dimension four, every Zoll convex domain is symplectomorphic to the ball.

    With the exception of Properties 1, 3, and 7, all other properties remain equivalent in higher dimensions.
\end{remark}

We remark that Conjecture~\ref{smoothMaximizerEquivalenceConj} is false without the smoothness condition. 
If $K$ is a local maximizer of $\rho_{\text{sys}}$ it does not imply that it is a symplectic ball; as discussed above, being foliated by action minimizers does not imply generalized Zoll; and Example~\ref{pentagonEx} shows that being cuts extremizer does not imply cuts additivity. This suggests a more nuanced picture in the non-smooth case.

\medskip \medskip

\noindent {\bf Structure of the paper:}
In Section~\ref{nonSmoothSection} we discuss the dynamical behaviors of action-minimizers in the non-smooth setting, and prove Theorem~\ref{nonSmoothClassificationThm}, Proposition~\ref{noIsotropicGlidingThm}, and Theorem~\ref{compactQuotientTopologyThm}.
Section~\ref{dynCutSection} is concerned with the dynamical characterization of cuts additivity, in which we prove Proposition~\ref{additiveDescriptionThm} and show, in Lemma~\ref{combinatorialCutLemma}, a combinatorial description of cuts additivity for polytopes.
In Section~\ref{additiveZollSection} we discuss the equivalence of the cuts additivity and generalized Zoll properties, and prove Theorem~\ref{ZollIsAdditiveWellDefinedThm} and Theorem~\ref{additiveIsZollNonSmoothThm}.
Finally, in the appendix of the paper, we prove Proposition~\ref{simplexCapCubeIsZollThm} and Proposition~\ref{pentagonProp}.

\medskip \medskip

\noindent{\bf Acknowledgment:} 
I wish to thank Alberto Abbondandolo, Yaron Ostrover, and Shira Tanny for many helpful suggestions. 
This work was partially supported by a grant from the Institute for Advanced Study by the Erik Ellentuck Fellow Fund.

\section{Non-smooth action minimizers}\label{nonSmoothSection}

In this section, we give the precise definitions of moving along coisotropic faces and isotropic gliding, and then prove Theorem~\ref{nonSmoothClassificationThm}, Proposition~\ref{noIsotropicGlidingThm}, and Theorem~\ref{compactQuotientTopologyThm}. 

\medskip

Let us start with some preliminaries on generalized characteristics (we refer to e.g. \cite{Kunzle, singular_survey_1} for more details).
Let $K \subset \R^{2n}$ be convex. For $x \in \partial K$ we define the \emph{normal cone} of $K$ at $x$ by
\[
 N_K(x) = \{ v \in \R^{2n} : \langle v, x-y \rangle \geq 0 \ \forall\, y\in K \}.
\]
When $\partial K$ is smooth, $N_K(x) = \R_+ u$, where $u$ is the outer unit normal to $\partial K$ at $x$.

A \emph{characteristic} in the non-smooth setting is a curve
\[
\gamma \in W^{1,2}([0,T], \partial K), \quad \gamma(0)=\gamma(T),
\]
such that
\[
 -J \dot{\gamma}(t) \in N_K(\gamma(t))
\]
for almost every $t\in[0,T]$. 
Recall that $W^{1,2}$ denotes the Hilbert space of absolutely continuous functions with square-integrable derivative. 

Throughout the paper we normalize $K$ so that $\ehzcap(K) = 1$, and we normalize characteristics by requiring
$ \dg(t) \in J \partial (\|\cdot\|_K^2)$
for almost every $t$, where 
$$\|\cdot\|_K := \inf\{ \lambda : x \in \lambda K\}$$ 
is the norm associated with the convex body $K$, and $\partial (\|\cdot\|^2_K)$ is the subdifferential (see \cite{subdiff_1}). Equivalently $h_K(-J \dg(t)) = 2$, where 
$$h_K(v) = \sup\{ \langle x,v\rangle:x \in K\}$$
is the support function of $K$. This normalization requires translating $K$ so that the origin lies in its interior. Under this normalization, the action of a loop coincides with its period, and we view closed characteristics with minimal action as loops $\gamma: S^1 \to \partial K$ of period $1$, where $S^1 = \R / \mathbb{Z}$.
The space $\mathrm{Sys}(K)$ of normalized action-minimizing closed characteristics is now naturally endowed with an $S^1$ action given by time translations. 

For $x \in \partial K$, the characteristic direction is contained in
    \[
    D_K(x) := J N_K(x) \cap T_K(x),
    \]
    where
    \[
      T_K(x) := \{ v \in \R^{2n} : \langle v, u\rangle \leq 0 \ \forall u \in N_K(x)\}
    \]
    is the tangent cone of $K$ at $x$. Note that $T_K(x)$ describes those directions that locally remain inside $K$.

\medskip

We now recall Clarke's dual action principle, and introduce a version of Clarke functional that would enable us to consider Clarke's duality even if the origin is not in the interior of the body, a useful fact when analyzing hyperplane cuts.
We consider the minimizers of the Clarke functional
\[
I^p_K: W^{1,2}(S^1,\R^{2n}) \to \R, \quad
I^p_K(\gamma) := \left( \frac{1}{2^p}\int_0^1 h_K(\dot{\gamma}(t))^p\,dt\right)^{\frac{2}{p}}
\]
under the constraint $\mathcal{A}(\gamma) = 1$.  
For every minimizer $\gamma$ there exists a translation $x_\gamma \in \R^{2n}$ such that $\sqrt{\ehzcap(K)} \gamma + x_\gamma \in \mathrm{Sys}(K)$, and $I_K^p(\gamma) = \ehzcap(K)$ for all $p \geq 1$ (see \cite{ArtsteinAvidanOstrover2008}).

    We denote $I_K(\gamma) := I_K^2(\gamma)$ and, for invariance under translations, we consider $\Phi_K(\gamma) := (I^1_K(\gamma))^\frac{1}{2}.$
    Note that for any $\gamma \in W^{1,2}(S^1,\R^{2n})$, not necessarily an action-minimizer, we have
    $$ \ehzcap(K) \leq \frac{\Phi(\gamma)^2}{\mathcal{A}(\gamma)}$$
    We observe that when translating $K$ by $\widetilde{K} = K + x_0$ for $x_0 \in \R^{2n}$, 
    $$\int_0^1 h_{\widetilde{K}}(-J\dg(t))dt = \int_0^1 h_K(-J\dg(t)) + \langle x_0, -J \dg(t) \rangle dt = \int_0^1 h_K(-J \dg(t))dt.$$
    So this minimization problem is invariant under translations, and in particular we do not need the assumption that the origin is in the interior of $K$, albeit then we can no longer normalize so that $h_K(\dg)$ is constant.

    \medskip \medskip

    We now turn to discuss the dynamical behavior of action-minimizers.
    We call $E = \widetilde{E} \cap N_K(x)$, where $\widetilde{E}$ is a linear subspace of $\R^{2n},$ an \emph{extreme face} of $N_K(x)$ if for every $u \in E$, there are no $v_1,v_2 \in N_K(x) \setminus E$ and $a \in (0,1)$ such that
\[
u = a v_1 + (1-a) v_2.
\]
When $\widetilde{E}$ is one dimensional, we call $E$ an extreme ray.

The following result can be viewed as an extension of the work \cite{pazit} to the case where $K$ is not necessarily a polytope.
\begin{thm} \label{extremeOrIsotropicThm}
    Let $\gamma \in \mathrm{Sys}(K)$. For almost every $t \in S^1$, 
    if $\dg(t)$ is a non-trivial linear combination of elements in $J N_K(\gamma(t))$, then their pairwise $\omega$-values vanish.
\end{thm}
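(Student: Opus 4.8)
\emph{Reformulation and strategy.} For a.e.\ $t$ the vector $-J\dg(t)$ is nonzero and lies in $N_K(\gamma(t))$; let $E(t)$ be the minimal face of $N_K(\gamma(t))$ containing it, so that $-J\dg(t)$ lies in the relative interior of $E(t)$ and, by extremality of faces, any conic decomposition $-J\dg(t)=\sum_i c_i u_i$ with $c_i>0$, $u_i\in N_K(\gamma(t))$, has all $u_i\in E(t)$. The theorem is then equivalent to: for a.e.\ $t$, either $E(t)$ is a single ray, or $\mathrm{span}(E(t))$ is $\omega$-isotropic (equivalently, the extreme rays of $E(t)$ are pairwise $\omega$-orthogonal). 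I argue by contradiction via Clarke's variational principle. Normalizing $\ehzcap(K)=1$, the loop $\gamma$ is (a translate of) a minimizer of $I_K^1$ under $\mathcal A(\cdot)=1$, with $I_K^1(\gamma)=\mathcal A(\gamma)=1$ and $h_K(-J\dg)\equiv 2$. (In the convention $-J\dg\in N_K$ used throughout, Clarke's functional reads $I_K^1(\eta)=\big(\tfrac12\int_0^1 h_K(-J\dot\eta)\,dt\big)^2$, differing from the displayed formula by the linear symplectomorphism $J$.) If the dichotomy fails on a set of positive measure, I will produce a loop $\eta\in W^{1,2}(S^1,\R^{2n})$, equal to $\gamma$ off a tiny interval, with $I_K^1(\eta)=I_K^1(\gamma)$ \emph{exactly} and $\mathcal A(\eta)>\mathcal A(\gamma)$; then $\ehzcap(K)\le\Phi_K(\eta)^2/\mathcal A(\eta)=1/\mathcal A(\eta)<1=\ehzcap(K)$, a contradiction.

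\emph{The local model.} Assume the dichotomy fails on a measurable $A\subset S^1$, $|A|>0$. For $t\in A$, $\mathrm{span}(E(t))$ is not isotropic, so some extreme rays $u_1(t),u_2(t)$ of $E(t)$ satisfy $\omega(u_1(t),u_2(t))\ne 0$, and since $-J\dg(t)$ lies in the relative interior of $E(t)$ there is $\tau(t)>0$ with $r(t):=-J\dg(t)-\tau(t)\big(u_1(t)+u_2(t)\big)\in E(t)\subset N_K(\gamma(t))$. Using that $t\mapsto N_K(\gamma(t))$ and $t\mapsto E(t)$ are measurable closed-convex-cone-valued maps, one selects $u_1,u_2,\tau,r$ measurably (splitting $A$ into countably many measurable pieces of fixed combinatorial type if needed), and Lusin's theorem then yields a positive-measure $A'\subset A$ on which $t\mapsto\big(\gamma(t),u_1(t),u_2(t),h_K(u_1(t)),h_K(u_2(t)),\tau(t),r(t)\big)$ is continuous with $|\omega(u_1(t),u_2(t))|\ge\beta>0$. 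Fix a Lebesgue density point $s$ of $A'$ and a short interval $I\ni s$ with $|I\setminus A'|\le\kappa|I|$; on $I\cap A'$ all these quantities are within $O(|I|)$ of their values at $s$.

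\emph{The coefficient surgery.} On $I\cap A'$ set $-J\dot\eta(t)=\tilde c_1(t)u_1(t)+\tilde c_2(t)u_2(t)+\tilde c_0(t)r(t)$ with $\tilde c_1=\tilde c_2=\tau+d_{1,2}$, $\tilde c_0=1+d_0$ (all nonnegative for small $d$), and $\dot\eta=\dg$ elsewhere, requiring $\int_I(\dot\eta-\dg)=0$ so that $\eta$ is a loop equal to $\gamma$ outside $I$. Impose pointwise the single \emph{budget} constraint $d_1 h_K(u_1)+d_2 h_K(u_2)+d_0 h_K(r)=0$. Because $u_1(t),u_2(t),r(t)\in N_K(\gamma(t))$ all attain their support value at the common point $\gamma(t)$, subadditivity of $h_K$ is saturated on this conic combination, so $h_K(-J\dot\eta(t))=h_K(-J\dg(t))$ pointwise on $I\cap A'$ and hence $I_K^1(\eta)=I_K^1(\gamma)$ with \emph{no} second-order leakage---this exactness is exactly why I restrict to coefficient-only perturbations. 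For the action, writing $\phi:=\dot\eta-\dg$ and $\Psi(t):=\int_s^t\phi$, a short integration by parts using $\int_I\phi=0$ gives
\[
\mathcal A(\eta)-\mathcal A(\gamma)=\int_I\omega\big(\Psi(t),\dg(t)\big)\,dt+Q,\qquad |Q|=O\big(\|\phi\|_\infty\,|\bar D|\big),
\]
where $\bar D_k:=\int_I\big(\int_s^t d_k\big)\,dt$ and $|\bar D|:=\max_k|\bar D_k|\sim\|\phi\|_\infty|I|^2$. Substituting $\phi=J(d_1u_1+d_2u_2+d_0 r)$ and $\dg(t)=J\big(\tau u_1(t)+\tau u_2(t)+r(t)\big)$, using $\omega(J\cdot,J\cdot)=\omega(\cdot,\cdot)$ and the continuity estimates of the previous step, the leading term becomes a linear functional $L(\bar D_1,\bar D_2,\bar D_0)$ which, after the budget constraint is used to eliminate one parameter, is a nonzero scalar multiple of $\omega(u_1(s),u_2(s))$ times a remaining free parameter; the cancellation that would kill the $\omega$-term in a one-parameter perturbation is avoided by the extra freedom of modulating $\tilde c_0$ as well. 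Since the closure condition $\int_I\phi=0$ imposes only finitely many linear constraints on the functions $d_k$, I can choose them so that $L$ is a prescribed nonzero multiple of $\omega(u_1(s),u_2(s))$; picking its sign forces $\mathcal A(\eta)>\mathcal A(\gamma)$, and taking $\|\phi\|_\infty$ then $|I|$ and $\kappa$ small makes $Q$ and the $O(|I|)$, $O(\kappa)$ errors negligible beside $|\bar D|$. This yields the contradiction.

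\emph{Main difficulty.} The substantive part is the passage to the local model: one must upgrade an a.e.\ statement about a merely $L^2$ velocity---whose pointwise extreme-ray decomposition along a measurably varying face of the normal cone is itself only measurable---to a quantitatively controlled finite-dimensional picture. The measurable-selection and Lusin/density steps, together with checking in the general (non-polyhedral) case that the first variation is not forced to vanish identically by the closure and budget constraints, are where the work concentrates; the remainder is a routine first-order perturbation. It is essential that the competitor $\eta$ is tested only through the inequality $\ehzcap(K)\le\Phi_K(\eta)^2/\mathcal A(\eta)$, valid for \emph{all} $W^{1,2}$ loops, so $\eta$ need not lie on $\partial K$ nor be a characteristic.
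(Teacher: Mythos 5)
Your overall skeleton (Clarke duality, measurable selection plus Lusin to get a controlled local model, and a competitor loop with the same Clarke value but larger action) is the same as the paper's, but the key analytic step fails: the first variation you compute is identically zero, independently of whether $\omega(u_1,u_2)$ vanishes. Indeed, for a.e.\ $t$ at which a fixed vector $u$ is an active normal, one has $\langle u,\gamma(s)\rangle=h_K(u)$ for $s$ in a set of density one near $t$, hence $\langle u,\dg(t)\rangle=0$; equivalently $\omega\bigl(u,-J\dg(t)\bigr)=0$. Writing $w=-J\dg=\tau u_1+\tau u_2+r$, this forces $\omega(u_1,w)=\omega(u_2,w)=\omega(r,w)=0$, i.e.\ $\omega(u_1,r)=-\tau\,\omega(u_1,u_2)$ and $\omega(u_2,r)=\tau\,\omega(u_1,u_2)$. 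Substituting these relations into your linear functional $L(\bar D_1,\bar D_2,\bar D_0)=\sum_k\bar D_k\,\omega(e_k,w)$ makes every coefficient vanish, so $L\equiv 0$ on all of parameter space (not only on the budget hyperplane), and no choice of $d_k$ produces a first-order gain in action. This is just the Lagrange/KKT condition at the Clarke minimizer: coefficient-only, first-order perturbations that preserve $I^1_K$ can only reproduce the Euler--Lagrange identity $\langle u_i,\dg\rangle=0$, which carries no information about the pairwise $\omega$-values. Your remark that modulating $\tilde c_0$ "avoids the cancellation" is therefore incorrect; the cancellation is structural.

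The information you need sits precisely in the term you discard: the quadratic remainder $Q=\mathcal A(\Psi)=\tfrac12\int_I\omega(\Psi,\dot\Psi)\,dt=\tfrac12\sum_{i<j}\omega(e_i,e_j)\int_I(D_i d_j-D_j d_i)\,dt$, which is second order in $|I|$ but is the \emph{leading} term once $L\equiv0$. The paper's proof (Lemma~\ref{nonExtremeIsIsotropicLemma}) exploits exactly this: it compares $\gamma$ with the two time-rearrangements of the interval $(t_0-\varepsilon,t_0+\varepsilon)$ in which all of the $u$-motion is performed before all of the $v$-motion, and vice versa. Both rearrangements preserve $h_K(-J\dg)\equiv2$ pointwise (hence the Clarke functional exactly), and their action differences from $\gamma$ are $\varepsilon^2\,a(t_0)(1-a(t_0))\,\omega(u(t_0),v(t_0))+o(\varepsilon^2)$ with opposite roles, so minimality squeezes $\omega(u(t_0),v(t_0))=0$ at every Lebesgue point. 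To repair your argument you would have to promote $Q$ to the leading term and show it can be given either sign when some $\omega(e_i,e_j)\neq0$ while keeping the budget and closure constraints --- which, after the dust settles, is essentially the paper's rearrangement computation. As written, the proposal does not prove the theorem.
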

\begin{remark}
    We remark that Theorem~\ref{extremeOrIsotropicThm} does not hold if one removes the assumption that $\gamma$ is an action-minimizer. For instance, the velocity $\dg(t)$ is not contained in an extreme ray of $J N_K(\gamma(t))$ when $\gamma(t)$ lies on an edge of a four-dimensional polytope which is a \emph{bad 1-face} in the sense of~\cite{Ch-H}.
\end{remark}

Let us proceed to the proof of Theorem~\ref{extremeOrIsotropicThm}.
The following lemma roughly states that outside movements on $J$ times isotropic faces of the normal cone, the velocity of $\gamma$ is always on an extreme ray.
\begin{lemma} \label{nonExtremeIsIsotropicLemma}
    Let $\gamma \in \text{Sys}(K)$, and let $u_i(t) \in N_K(\gamma(t)), i=1,\ldots,k$ be measurable selections normalized by $h_K(u_i(t))  = 2$, and satisfying that there exists measurable $a_i(t) : S^1 \to \R, i=1,\ldots,k$ with $0 < a_i(t) < 1$ and with $-J \dot{\gamma}(t) = \sum a_i(t) u_i(t)$ almost everywhere. Then one has for almost every $t$ and all $i\neq j$, $\omega(u_i(t),u_j(t)) = 0.$ 
\end{lemma}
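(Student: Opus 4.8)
The plan is to use Clarke's dual action principle together with the observation that an action-minimizer $\gamma$ is a critical point of the Clarke functional under the action constraint. Concretely, I would fix the minimizer $\gamma \in \mathrm{Sys}(K)$ and, assuming toward a contradiction that on a positive-measure set $S \subset S^1$ there exist indices $i \neq j$ with $\omega(u_i(t), u_j(t)) \neq 0$, construct a variation of $\gamma$ that strictly decreases the Clarke functional $I^1_K$ (equivalently $\Phi_K$) while preserving the action $\mathcal{A}$, contradicting minimality. The variation should be a "local swap'': on a small subinterval, replace the piece of the velocity that points in the direction $a_i u_i + a_j u_j$ by a rearranged version that uses $u_i$ and $u_j$ on disjoint time subintervals rather than simultaneously. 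Since $h_K$ is positively homogeneous and subadditive, and since $h_K(u_i) = h_K(u_j) = 2$, such a rearrangement does not increase $\int h_K(\dg)\,dt$ (by convexity of $h_K$ it can only help), but it changes the enclosed symplectic area $\mathcal{A}$ by an amount proportional to $\int_S a_i a_j\, \omega(u_i, u_j)\, dt \neq 0$; one then rescales or reparametrizes to restore $\mathcal{A} = 1$, and the first-order area change forces a strict decrease of the normalized functional $\Phi_K(\gamma)^2/\mathcal{A}(\gamma)$ unless $\omega(u_i,u_j)$ vanishes a.e. on $S$.

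The key steps, in order, would be: (1) recall that $\gamma$ minimizes $\Phi_K(\gamma)^2/\mathcal{A}(\gamma)$ over $W^{1,2}(S^1,\R^{2n})$, with value $\ehzcap(K) = 1$, and that $\dg = \sum a_i u_i$ with $h_K(u_i) = 2$ gives $h_K(-J\dg(t)) \le 2\sum a_i(t)$ with equality when the $u_i$ are "aligned'' for $h_K$, i.e. lie on a common face of $K$; (2) after passing to a positive-measure subset, assume $a_i, a_j$ are bounded below by $\delta > 0$ and $\omega(u_i, u_j)$ has constant sign; (3) build the competitor loop $\gamma_\varepsilon$ by modifying $\dg$ on a set $S_\varepsilon \subset S$ of measure $\varepsilon$, splitting the contribution $a_i u_i + a_j u_j$ into two half-intervals carrying $2 a_i u_i$ and $2 a_j u_j$ respectively; (4) compute that $\int h_K(\dg_\varepsilon) \le \int h_K(\dg)$ (using subadditivity and the alignment) while $\mathcal{A}(\gamma_\varepsilon) = \mathcal{A}(\gamma) + c\,\varepsilon + O(\varepsilon^2)$ with $c \neq 0$; (5) normalize $\gamma_\varepsilon$ to have action $1$ and conclude $\Phi_K(\gamma_\varepsilon)^2 < \Phi_K(\gamma)^2 = 1$ for small $\varepsilon$ of the right sign, the desired contradiction. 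Finally, since $i, j$ were arbitrary and the exceptional set for each pair has measure zero, a countable union (over rational thresholds) gives the a.e. statement.

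The main obstacle I anticipate is step (4): controlling the effect of the rearrangement on $h_K(\dg)$ and on closure of the loop. Rearranging velocities changes the curve's endpoint, so I must add a small correction to $\gamma_\varepsilon$ to keep it closed and to keep it on $\partial K$ — and this correction must itself be $O(\varepsilon)$ and not spoil the sign of the area change. One clean way around staying exactly on $\partial K$ is to work entirely on the dual side: Clarke's principle lets one minimize over all of $W^{1,2}(S^1, \R^{2n})$ (not just loops in $\partial K$), with the body entering only through $h_K(\dg)$, so the competitor need not lie on $\partial K$ at all; one only needs $\dg_\varepsilon$ to be a genuine $L^2$ function with the prescribed rearranged values and the loop to close up, which is arranged by choosing the split so that $\int_{S_\varepsilon} \dg_\varepsilon = \int_{S_\varepsilon} \dg$. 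That reduces the whole argument to: a measure-preserving rearrangement of a fixed set of velocity vectors on a short interval, chosen to conserve the mean velocity (hence closure) and not increase $\int h_K(\dg)$, but to change $\mathcal A$ to first order in $\varepsilon$ by an amount controlled by $\omega(u_i, u_j)$. I expect the bookkeeping to mirror the polytope argument in \cite{pazit}, with the only new point being that $N_K(\gamma(t))$ is now a general convex cone, handled by the measurable-selection hypothesis already built into the statement.
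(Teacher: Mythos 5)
Your plan is essentially the paper's proof: a local rearrangement of the velocities that keeps $\int h_K(-J\dot{\gamma})\,dt$ and the closure of the loop unchanged while shifting the action by a leading term proportional to $a_i a_j\,\omega(u_i,u_j)$, contradicting minimality via Clarke duality; the paper implements the rearrangement by a time change (all the $u_i$-time first, then all the $u_j$-time), which closes the loop exactly and sidesteps the correction term you worry about. Two small repairs to your outline: the area shift is of order $\varepsilon^2$, not $\varepsilon$ (harmless, since the Clarke constraint is preserved exactly so any strict increase of action suffices), and since $\varepsilon$ is the measure of the modified set (hence positive) the contradiction is obtained not by ``choosing the sign of $\varepsilon$'' but by performing the swap in \emph{both} orders --- the two orderings produce area changes of opposite sign, and Clarke duality forbids any increase, forcing $\omega(u_i,u_j)=0$ at a.e.\ Lebesgue point.
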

\begin{remark}
    There always exist measurable selections $u_i(t),a_i(t)$ as above.
    Note that if $\dot{\gamma}(t)$ is along an extreme ray, then one is able to put $u_i(t)=-J\dot{\gamma}(t)$ for every $i$.
\end{remark}
\begin{remark}
    The condition $h_K(u_i(t)) = 2$ is consistent with our normalization $h_K(-J \dg(t)) = 2,$ since $h_K$ restricted to the same normal cone is a linear function.
\end{remark}
\begin{proof}
    Without loss of generality, we may assume that $\ehzcap(K) = 1.$
    For simplicity, we assume that $k=2$, and put 
    $$\dg(t) = a(t) J u(t) + (1-a(t)) J v(t).$$ 
    The proof for $k >2$ is similar.
    Recall that we normalize $\gamma$ so that $h_K(-J\dot{\gamma}(t)) = 2$ for almost every $t$, so $\gamma$ is of period 1, and $\mathcal{A}(\gamma) = 1.$
    In addition, by Clarke's duality, every 
    $\gamma': S^1 \to \R^{2n}$ 
    with $h_K(-J\dot{\gamma}'(t)) = 2$ almost everywhere, satisfies 
    $$\mathcal{A}(\gamma') \leq 1.$$ 
    Let $t_0 \in S^1$ and let $\varepsilon > 0.$
    Roughly speaking, we will show that for a generic choice of $t_0$ with $\omega(u(t_0), v(t_0)) \neq 0$, one of the loops that moves $\varepsilon$-time either in $u$ and then in $v$ or in $v$ and then in $u$ instead of their convex combination, has larger action which would contradict the minimality of $\gamma$.
    For $s \in (-\varepsilon, \varepsilon),$ denote 
    $$f(s) = \int_{-\varepsilon}^s a(t_0+\theta)d\theta, \;\;\quad g(s) = s+\varepsilon - f(s).$$
    Denote $\tau_1 = f(\varepsilon),$
    $\tau_2 = g(\varepsilon).$
    Since $f,g$ are monotone increasing, they have inverses $f^{-1}(s), g^{-1}(s)$ defined for $s \in (0, \tau_1)$ and $s \in (0,\tau_2)$ respectively.
    Let $\gamma_1$ be defined by 
    \[
\dot{\gamma}_1(t) =
\begin{cases}
  \dot{\gamma}(t), & t \in (0,\,t_0 - \varepsilon), \\[6pt]
  J u\!\bigl(t_0+f^{-1}(t - t_0 + \varepsilon)\bigr), & t \in (t_0 - \varepsilon,\,t_0 - \varepsilon + \tau_1), \\[6pt]
  J v\!\bigl(t_0+g^{-1}(t - t_0 - \tau_1 + \varepsilon)\bigr), & t \in (t_0 - \varepsilon + \tau_1,\,t_0 + \varepsilon), \\[6pt]
  \dot{\gamma}(t), & t \in (t_0 + \varepsilon,\,1),
\end{cases}
\]
    and $\gamma_2$ by
    \[
\dot{\gamma}_2(t) =
\begin{cases}
  \dot{\gamma}(t), & t \in (0,\,t_0 - \varepsilon), \\[6pt]
  J v\!\bigl(t_0+g^{-1}(t - t_0 + \varepsilon)\bigr), & t \in (t_0 - \varepsilon,\,t_0 - \varepsilon + \tau_2), \\[6pt]
  J u\!\bigl(t_0+f^{-1}(t - t_0 - \tau_2 + \varepsilon)\bigr), & t \in (t_0 - \varepsilon + \tau_2,\,t_0 + \varepsilon), \\[6pt]
  \dot{\gamma}(t), & t \in (t_0 + \varepsilon,\,1),
\end{cases}
\]
    Since $u,v$ were chosen so that $h_K(u) = h_K(v) = 2,$ we have
    $$\frac{1}{4}\int_0^1 h_K(-J\dg(t))^2dt = \frac{1}{4}\int_0^1 h_K(-J\dg_1(t))^2dt = \frac{1}{4}\int_0^1 h_K(-J\dg_2(t))^2dt = 1.$$
    With respect to action, we have that
    $$\mathcal{A}(\gamma) - \mathcal{A}(\gamma_1) = \frac{1}{2} \int_{t_0-\varepsilon}^{t_0+\varepsilon} \int_{t_0-\varepsilon}^{t} a(t)(1-a(s))\omega(u(t),v(s))dsdt,$$
    $$\mathcal{A}(\gamma_2) - \mathcal{A}(\gamma) = \frac{1}{2}\int_{t_0-\varepsilon}^{t_0+\varepsilon} \int_{t_0-\varepsilon}^{t} a(s)(1-a(t))\omega(u(s),v(t))dsdt.$$
    Denote $Y(t,s) := a(t) (1-a(s)) \omega(u(t),v(s)).$
    For $t_0$ a Lebesgue point for $a, u, v$, we have
    \begin{align*} 
    &\frac{1}{2\varepsilon} \int_{t_0-\varepsilon}^{t_0+\varepsilon} |a(t) - a(t_0)| dt = o(1), \;\; \frac{1}{2\varepsilon} \int_{t_0-\varepsilon}^{t_0+\varepsilon} \|u(t) - u(t_0)\| dt = o(1), \\ 
    &\frac{1}{2\varepsilon} \int_{t_0-\varepsilon}^{t_0+\varepsilon} \|v(t) - v(t_0)\| dt = o(1). 
    \end{align*}
    Since $h_K(u(t)) = h_K(v(t)) = 2$ and $a(t) \in (0,1)$, all functions are uniformly bounded which automatically yields
    $$ \frac{1}{(2\varepsilon)^2} \int_{t_0-\varepsilon}^{t_0+\varepsilon} \int_{t_0-\varepsilon}^{t_0+\varepsilon} |Y(t,s) - Y(t_0,t_0)| = o(1).$$
    Plugging this into the formula for the action gives
    $$ \mathcal{A}(\gamma) - \mathcal{A}(\gamma_1) = \frac{1}{2} \int_{t_0-\varepsilon}^{t_0+\varepsilon} \int_{t_0-\varepsilon}^{t} Y(t_0,t_0) + (Y(t,s) - Y(t_0,t_0))ds dt = \varepsilon^2 Y(t_0,t_0) + o(\varepsilon^2).$$
    Similarly,
    $$ \mathcal{A}(\gamma_2) - \mathcal{A}(\gamma) =\varepsilon^2 Y(t_0,t_0) + o(\varepsilon^2).$$
    The extremality of $\gamma$ gives $\mathcal{A}(\gamma) - \mathcal{A}(\gamma_1)\geq 0, \; \; \mathcal{A}(\gamma_2) - \mathcal{A}(\gamma) \leq 0,$ which implies, as $\varepsilon \to 0$, $Y(t_0,t_0) = 0.$

    \end{proof}

    \begin{proof}[Proof of Theorem~\ref{extremeOrIsotropicThm}]
        Assume by contradiction, that in a positive measure set $\dg(t) = \sum a_i(t) J u_i(t)$ for some vectors on extreme points of $N_K(\gamma(t)) \cap S$, $$u_i(t) \in N_K(\gamma(t)) \cap S, \;\;\; a_i(t) \in (0,1), \;\;\; i=1,\ldots,2n$$ 
        where $S = \{ u \in \R^{2n}: h_K(u)=2\}$, and there exists $i,j \in \{1,\ldots,2n\}$ so that $\omega(u_i(t),u_j(t)) \neq 0.$
        In fact, as an application of the Jankov--von Neumann uniformization theorem, one can choose $u_i(t), a_i(t)$ to be measurable. Morever, they can then be extended to measurable selections for all times.
        Hence we have selections of extreme points $$ u_i : S^1 \to N_K(\gamma(t)) \cap S, \qquad a_i:S^1 \to (0,1),$$
        with $\dg(t) = \sum a_i(t) J u_i(t)$, satisfying that on a positive measure set, there exists $i\neq j$ with $\omega(u_i(t),u_j(t)) \neq 0,$ which contradicts Lemma~\ref{nonExtremeIsIsotropicLemma}.

    \end{proof}
    
\medskip\medskip

Let us now give the precise definition of the distinction between moving along coisotropic faces and isotropic gliding.
Let $\gamma \in \mathrm{Sys}(K)$.
Let $E$ be a \emph{coisotropic face} of $\partial K$. I.e. $E$ is an extreme convex subset of $\partial K,$  and $TE = \text{span}(E-E)$ is coisotropic.
In that case, we have $D_K(x) \subset TE$ for every $x \in \text{relint}(E).$
We denote by $N_K(E) = \bigcap_{x \in E} N_K(x).$ Equivalently, $N_K(E) = N_K(x)$ for $x \in \text{relint}(E).$ Note that $N_K(E)$ lies on an isotropic subspace.
\begin{definition}\label{coisotropicFaceDef}
    We say that $\gamma \in \text{Sys}(K)$ \emph{moves along the coisotropic face} $E$ at time $t$ if there exists an open interval $t \in (t_0,t_1) \subset S^1$ with $\gamma(s) \in E$ and 
$\dg(s) \in J N_K(E)$
for almost every $t_0 < s < t_1.$
We will say that $\gamma$ \emph{strictly moves along} $E$ at time $t$, if there exists an open interval as above which satisfies in addition $D_K(\gamma(s))  \subset TE$ for all $t_0 < s < t_1.$
\end{definition}

\begin{remark}
    If $\gamma \in \text{Sys}(K)$ moves along the coisotropic face $E$ at times $(t_0,t_1)$, then every other closed characteristic $\gamma'$ which moves along $E$ at times $(t_0,t_1)$ and agrees with $\gamma$ outside $(t_0,t_1)$ is also an action-minimizer. 
\end{remark}

\begin{definition} \label{isotropicGlidingDef}
Let $\gamma \in \mathrm{Sys}(K)$. We say that a positive measure set $A \subset S^1$ is an \emph{isotropic gliding set} if:
\begin{itemize}
    \item For every $t \in A$, $\dot{\gamma}(t)$ lies in the relative interior of an isotropic subspace of $J N_K(\gamma(t))$ of dimension greater than one.
    \item For every $t \in A$, $\gamma$ does not move along any coisotropic face of $\partial K$ at time $t$ (in the sense defined above).
\end{itemize}
\end{definition}

\begin{proof}[Proof of Theorem~\ref{nonSmoothClassificationThm}]
This is an immediate consequence of Theorem~\ref{extremeOrIsotropicThm}.
Specifically, for almost every $t$, $-J\dg(t)$ belongs either to an extreme ray or to an isotropic subspace of $N_K(\gamma(t))$.
In the latter case, $\gamma(t)$ may move along a coisotropic face.
If the set of times at which $\gamma(t)$ does not move along a coisotropic face and $-J\dg(t)$ does not belong to an extreme ray has positive measure, then it is an isotropic gliding set.
\end{proof}

\medskip \medskip

We now move to the proof of Proposition~\ref{noIsotropicGlidingThm}.
Let us first make the following observation regarding action-minimizing characteristic dynamics, which will be useful later.

\begin{definition}
    Let $K \subset \R^{2n}$ be a convex body, and let $\gamma$ be a characteristic.
    Let a normal $u \in N_K(\gamma(t))$ be on an extreme ray of $N_K(\gamma(t)).$
    We say that $u$ is $\gamma$-\emph{active} at time $t$ if for a small interval $I$ surrounding $t$, one can write $\dg(s) = a(s)Ju + J\nu(s)$ for almost every $s \in I$ such that
    \begin{itemize}
        \item $\nu(s) \in N_K(\gamma(s))$.
        \item $a(t) > 0.$
        \item $a(s) \geq 0$ and $a(s)>0$ only if $u \in N_K(\gamma(s))$.
        \item $a$ and $\nu$ are measurable.
        \item $t$ is a Lebesgue point for $a$ and for $\nu$.
    \end{itemize}
    If there is no ambiguity with the choice of closed characteristic, we say that $u$ is \emph{active} at time $t$.
    When there is a set of normals $\{n_{1},\ldots,n_{m}\} \subset N_K(\gamma(t))$ with $\dg(s) = \sum_{j=1}^m a_j(s) n_{j}$, and $a_j(t)>0$ (defined similarly with $t$  a Lebesgue point for $a_j, j=1,\ldots,m$), we call $\{n_{1},\ldots,n_{m}\}$ an \emph{active set} at time $t$.
    Note that by Theorem~\ref{extremeOrIsotropicThm}, when $\gamma \in \text{Sys}(K)$ each active set spans an isotropic subspace. Moreover, the space of such linear combinations is convex, and we get that the $\omega$-value vanishes for every pair of active normals at time $t$.
    The set that includes all active normals is called the \emph{maximal active set} at time $t$.
\end{definition}

\begin{lemma} \label{normalsCrossLemma}
Let $K \subset \R^{2n}$ be a convex body, and let $\gamma \in \text{Sys}(K).$
Suppose that $u \in \R^{2n}$ is active at times $t_1$ and $t_2$ with $t_1< t_2.$
Then 
$$ \int_{t_1}^{t_2} \omega(u, -J\dg(t)) dt = 0. $$
    
\end{lemma}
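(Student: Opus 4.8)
The plan is to reuse the variational competitor idea from the proof of Lemma~\ref{nonExtremeIsIsotropicLemma}, but now with a competitor modification that is not infinitesimal: we reroute the characteristic so that the ``$u$-part'' of the motion which is spread between times $t_1$ and $t_2$ is instead performed all at once in a short burst, leaving everything else unchanged. Concretely, since $u$ is active at $t_1$ and $t_2$ we may write, on a full interval $[t_1,t_2]$ (after possibly enlarging the set of times where the $u$-coefficient is allowed to be zero), $\dg(t) = a(t) J u + J\nu(t)$ with $a(t)\ge 0$ measurable, $\nu(t)\in N_K(\gamma(t))$, and $a(t)>0$ only where $u\in N_K(\gamma(t))$; set $\tau := \int_{t_1}^{t_2} a(t)\,dt$. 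Define a competitor loop $\widetilde\gamma$ which, on $[t_1,t_2+\tau]$, first travels along $J\nu$ (time-reparametrized to use up the full $[t_1,t_2]$ worth of $\nu$-motion) and then, on the remaining interval of length $\tau$, travels along the constant direction $J u$; outside $[t_1,t_2]$ we keep $\dg$. Because $u\in N_K(\gamma(s))$ precisely on the set where $a(s)>0$, and $N_K$ only ``shrinks'' as we move into the relative interior — more carefully, one checks using the extreme-ray hypothesis and convexity of the normal cones that $\gamma(t_2) + sJu \in \partial K$ for $s\in[0,\tau]$ — the competitor stays on $\partial K$ and is a genuine characteristic.

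The point of this rerouting is that it does not change $\int_0^1 h_K(-J\dg)\,dt$: the normalization $h_K(u)=2$ matches $h_K(-J\dg(s))=2$, and $h_K$ restricted to a fixed normal cone is linear, so the total ``$h_K$-cost'' is preserved under time-reparametrization and reordering (this is exactly the invariance already exploited in Lemma~\ref{nonExtremeIsIsotropicLemma}). Hence $\widetilde\gamma$, after normalizing, remains an admissible competitor for the Clarke problem, and minimality of $\gamma$ forces $\mathcal{A}(\widetilde\gamma)\le \mathcal{A}(\gamma)$. A direct computation of the symplectic area difference — the area picked up by first doing the $\nu$-motion then the $u$-burst, versus interleaving them as in $\gamma$ — yields
\[
\mathcal{A}(\gamma) - \mathcal{A}(\widetilde\gamma) = \tfrac12\int_{t_1}^{t_2} \omega\!\left(J u,\, J\nu(t)\right) a(t)\,dt + (\text{boundary terms that cancel}),
\]
and since $\omega(Ju,J\nu(t)) = \omega(u,\nu(t))$ while $-J\dg(t) = a(t)u + \nu(t)$, this is $\tfrac12\int_{t_1}^{t_2}\omega(u,-J\dg(t))\,dt$ up to the antisymmetry of $\omega$ making the $\omega(u,u)$ terms vanish. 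Running the symmetric competitor (do the $u$-burst first, then the $\nu$-motion) gives the reverse inequality, and together they force $\int_{t_1}^{t_2}\omega(u,-J\dg(t))\,dt = 0$.

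The main obstacle I anticipate is the technical verification that the rerouted competitor $\widetilde\gamma$ genuinely lies on $\partial K$: one must show that appending the straight segment in direction $Ju$ at $\gamma(t_2)$ of the correct length lands back on a point from which $\gamma$ can be continued, i.e. that $\gamma(t_2)+\tau Ju$ equals the point the original $\gamma$ reached and that the intermediate segment stays on the boundary. This uses crucially that $u$ lies on an \emph{extreme ray} of the normal cone at all the relevant times (so that $Ju\in D_K = JN_K\cap T_K$ there, keeping the segment both tangent and non-separating) together with the fact, implicit in the activeness definition, that $u\in N_K(\gamma(s))$ exactly where the $u$-coefficient is positive; one handles the measure-zero issues by the Lebesgue-point hypotheses built into the definition of ``active.'' The area bookkeeping and the reparametrization estimates are then routine, parallel to Lemma~\ref{nonExtremeIsIsotropicLemma}.
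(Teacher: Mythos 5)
Your plan diverges from the paper's proof in a way that breaks the argument. The paper does not perform the ``all at once in a short burst'' rearrangement you describe; it transfers an \emph{infinitesimal} amount of $u$-mass between two small intervals $I_1,I_2$ around $t_1$ and $t_2$, rescaling the coefficient of $Ju$ by $(1\mp\tau_2\delta)$ on $I_1$ and $(1\pm\tau_1\delta)$ on $I_2$ while leaving $\dg$ untouched elsewhere. Because the modification is localized at the two endpoints, the first-order change in action is the transferred mass times the symplectic pairing of $u$ with the \emph{entire} arc swept between $t_1$ and $t_2$, i.e.\ $\pm\tau_1\tau_2\delta\bigl(\int_{t_1}^{t_2}\omega(u,-J\dg(t))\,dt+o(\varepsilon)\bigr)$ --- the same quantity with both signs, which minimality then forces to vanish. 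Your global rearrangement does not produce this quantity. If you move all of the $u$-motion on $[t_1,t_2]$ to the end, the cross terms you pick up are weighted by how much $u$-mass lies before versus after each time $t$: a computation of the kind you sketch gives $\mathcal{A}(\gamma)-\mathcal{A}(\widetilde\gamma)=\int_{t_1}^{t_2}\omega(u,\nu(t))A(t)\,dt$ with $A(t)=\int_{t_1}^{t}a(s)\,ds$, while the symmetric competitor ($u$-burst first) gives $\int_{t_1}^{t_2}\omega(u,\nu(t))(A(t)-\tau)\,dt$. These are two \emph{different} weighted integrals, both constrained to be $\ge 0$ by minimality; they do not combine to give the unweighted identity $\int_{t_1}^{t_2}\omega(u,-J\dg(t))\,dt=0$. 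Your displayed formula for the area difference, with the extra factor $a(t)$ and no weight, is therefore not correct, and the ``reverse inequality'' step fails.

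A secondary point: the obstacle you flag as the main difficulty --- verifying that the rerouted competitor stays on $\partial K$ --- is a non-issue in the paper's framework. Clarke's dual action principle only requires the competitor loop to have the same value of the linear functional $\Phi_K$ (which is preserved because $h_K(u)=2$ matches the normalization and $h_K$ is linear on each normal cone); the competitor need not be a characteristic nor lie on $\partial K$. Conversely, your proposed justification that $\gamma(t_2)+sJu\in\partial K$ for $s\in[0,\tau]$ is false in general and is not needed. The fix is to abandon the burst rearrangement and adopt the paper's localized two-point mass transfer.
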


\begin{proof}
    Let $a: S^1 \to \R$ be a choice for a measurable function describing the coefficient of $u$ in $\dg.$
    Meaning, $a(t) \geq 0$ for every $t \in S^1,$ $a(t)>0$ only if $u \in N_K(\gamma(t)),$ and $\dg(t) = a(t) Ju + J\nu(t),$ for almost every $t \in S^1,$ where $\nu(t) \in N_K(\gamma(t)).$
    Choose small intervals $I_1, I_2$ surrounding $t_1,t_2$ respectively. Denote $\varepsilon = \max\{|I_1|,|I_2|\}$ and $\tau_1 = \int_{I_1} a(t)dt , \; \tau_2 = \int_{I_2} a(t)dt.$ Note that $\tau_1,\tau_2 > 0$.
    Let $\delta>0$ be small enough so that $ \tau_1\delta,  \tau_2\delta < 1.$
    Let $\gamma_1$ be defined by 
    $$ \dg_1(t) = 
    \begin{cases}
  \dot{\gamma}(t), & t \not\in I_1 \cup I_2, \\[6pt]
  (1-\tau_2 \delta)a(t) Ju + J \nu(t), & t \in I_1,\\[6pt]
  (1+\tau_1 \delta ) J u + J\nu(t), & t \in I_2
\end{cases}
$$
    Similarly, $\gamma_2$ is defined by 
    $$\dg_2(t) = 
    \begin{cases}
    \dg(t), & t \not\in I_1 \cup I_2,\\[6pt]
    (1+\tau_2 \delta)a(t) Ju + J \nu(t), & t \in I_1,\\[6pt]
    (1-\tau_1 \delta) J u + J\nu(t), & t \in I_2
        
    \end{cases}$$
    The values of the linear Clarke functional $\Phi_K(\gamma_i), i=1,2$ coincide with $\Phi_K(\gamma)$, and the respective action differences from $\gamma$ are 
    $$ \tau_1 \tau_2 \delta \left( \pm\int_{t_1}^{t_2} \omega(u, -J\dg(t)) + o(\varepsilon)\right)$$ 
    as $|I_1|,|I_2| \to 0.$
    By a standard Clarke's duality argument, we get the required.
    
    We note that one has an $o(\varepsilon)$ term and not just $o(1)$ term above, since Theorem~\ref{extremeOrIsotropicThm} guarantees that $\omega(u, \dg(t)) = 0$ for almost every $t$ with $a(t) > 0.$
    We remark that $o(1)$ is sufficient for our purposes, but the $o(\varepsilon)$ estimate is needed for the proof of Lemma~\ref{finiteNormalsLemma}.
    
\end{proof}

\begin{lemma}\label{finiteNormalsLemma}
Let $K \subset \R^{2n}$ be a convex body, and let $\gamma \in \text{Sys}(K).$
Suppose that $u,v \in \R^{2n}$ with $\omega(u,v) \neq 0$ satisfy the following. The normal $u$ is active at times $t_1,t_3$ and $v$ is active at $t_2$, with $t_1<t_2<t_3.$ Then $v$ is not active for every $t \not\in (t_1,t_3).$
    
\end{lemma}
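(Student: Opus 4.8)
The plan is to argue by contradiction. Suppose $v$ is active at some time $t_0 \notin (t_1,t_3)$; by relabeling (using the $S^1$-symmetry and possibly swapping the roles of $t_1$ and $t_3$) we may assume $t_0 < t_1 < t_2 < t_3$, so that the cyclic order on $S^1$ is $t_0, t_1, t_2, t_3$. The idea is to apply Lemma~\ref{normalsCrossLemma} twice: once to the normal $u$, which is active at $t_1$ and $t_3$, giving
\[
\int_{t_1}^{t_3} \omega(u, -J\dg(t))\,dt = 0,
\]
and once to the normal $v$, which is active at $t_0$ and $t_2$, giving
\[
\int_{t_0}^{t_2} \omega(v, -J\dg(t))\,dt = 0.
\]
The goal is to extract from these two vanishing integrals a contradiction with $\omega(u,v)\neq 0$, by combining them with a perturbation that simultaneously modifies the $u$-components near $t_1,t_3$ and the $v$-components near $t_0,t_2$.

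The key step is to build a one-parameter family of competitor loops refining the construction in Lemma~\ref{normalsCrossLemma}, now perturbing on four small intervals $I_0,I_1,I_2,I_3$ around $t_0,t_1,t_2,t_3$. Concretely, on $I_1$ and $I_3$ we shift mass $\delta$ of the $u$-coefficient from $I_1$ to $I_3$ (as in Lemma~\ref{normalsCrossLemma}), and on $I_0$ and $I_2$ we shift mass $\delta'$ of the $v$-coefficient from $I_0$ to $I_2$; we keep the linear Clarke functional $\Phi_K$ unchanged, exactly as before, since all inserted velocities lie in $JN_K(\gamma(s))$ with $h_K$-value matching the normalization. The resulting action change, computed via the bilinear formula for $\mathcal A$, decomposes into: (a) a $u$-only term of order $\delta$ times $\int_{t_1}^{t_3}\omega(u,-J\dg)\,dt = 0$; (b) a $v$-only term of order $\delta'$ times $\int_{t_0}^{t_2}\omega(v,-J\dg)\,dt = 0$; (c) a cross term of order $\delta\delta'$ proportional to $\omega(u,v)$, picking up a definite sign determined by which of the intervals $I_0,\dots,I_3$ are nested inside which. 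Choosing the signs of $\delta$ and $\delta'$ independently, one of the four sign combinations makes the $O(\delta\delta')$ cross term strictly positive while the $O(\delta)$ and $O(\delta')$ terms vanish; this contradicts action-minimality of $\gamma$. Here the fact that Lemma~\ref{normalsCrossLemma} was proved with an $o(\varepsilon)$ (rather than merely $o(1)$) error is essential: it guarantees the first-order terms vanish to high enough order that the genuinely second-order cross term $\delta\delta'\,\omega(u,v)$ dominates as the interval lengths shrink.

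The main obstacle I anticipate is the bookkeeping of the cross term's sign and making sure it does not vanish: one must verify that in the four-interval configuration $t_0<t_1<t_2<t_3$, the double integral producing the $\omega(u,v)$ contribution does not itself integrate to something proportional to one of the already-vanishing quantities. This reduces to checking that the cross term is (up to positive constants $\tau_0\tau_2\tau_1\tau_3$ coming from the coefficient masses) simply $\pm\,\omega(u,v)$ times the \emph{length} of the time interval over which both a shifted $u$-velocity and a shifted $v$-velocity are simultaneously ``in play'', which for the nested ordering is a fixed positive quantity independent of the fine structure of $\dg$. Once this is isolated, the sign can be flipped at will by the choice of $\delta,\delta'$, and minimality is contradicted. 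A secondary technical point is to confirm that one may choose all the $I_j$ so small that $I_0,\dots,I_3$ are pairwise disjoint and each $t_j$ is a Lebesgue point for all the relevant coefficient and normal selections, which is possible since activeness already builds in the Lebesgue-point requirement.
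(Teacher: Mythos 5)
Your proposal is correct and follows essentially the same route as the paper's proof: argue by contradiction with a fourth active time outside $(t_1,t_3)$ (the paper places it after $t_3$, you place it before $t_1$), perform the two simultaneous mass shifts of Lemma~\ref{normalsCrossLemma}, kill the first-order single-normal terms using that lemma's conclusion and its $o(\varepsilon)$ error estimate, and extract the sign-adjustable cross term $\pm c\,\omega(u,v)$ whose non-vanishing comes precisely from the interleaved (linked) arrangement of the two pairs of times. The only slip is terminological: you call this arrangement ``nested,'' but for a genuinely nested pair of time intervals the cross term cancels --- it is exactly the interleaving forced by $t_2\in(t_1,t_3)$ and $t_0\notin(t_1,t_3)$ that makes it nonzero.
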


\begin{proof}
    Argue by contradiction, and assume that there is $t_4 > t_3$ for which $v$ is active.
    As in the proof of Lemma~\ref{normalsCrossLemma}, using Clarke's duality, one can increase the coefficients of $u$ near $t_3$ and reduce them near $t_1$. Simultaneously we either increase the coefficients of $v$ near $t_4$ and reduce them near $t_2,$ or vice versa.
    The proof of Lemma~\ref{normalsCrossLemma} yields that the leading terms in the action differences are now $\pm c \omega(u,v)$ for some constant $c$, and repeating the same arguments as before, we get $\omega(u,v) = 0,$ contradicting the assumptions.

\end{proof}

\begin{definition}
    Let $K \subset \R^{2n}$ be a convex polytope, and let $\gamma \in \text{Sys}(K).$
    Note that the extreme rays of $N_K(\gamma(t))$ are among rescalings of outer normal vectors to the facets of $K$. 
    Denote by $F_1,\ldots, F_N$ the faces of $K$.
    Denote $A_{F} := \{ t: \gamma(t) \in \text{relint}(F)\}$ the set of times for which $\gamma$ is in the relative interior of a face $F$ of $\partial K$. Note that $A_{F_i}$ is a disjoint cover of $S^1.$
    We call $t \in S^1$ a \emph{face transition} if $\forall \delta >0,$ there are $i \neq j$ and $t_1,t_2 \in (t-\delta, t+\delta)$ with $t_1 \in A_{F_i}$ and $t_2 \in A_{F_j}.$
\end{definition}

\begin{corollary}\label{finiteFacesLemma}
    Let $K \subset \R^{2n}$ be a convex polytope, and let $\gamma \in \text{Sys}(K).$
    Then the number of face transitions is finite.
\end{corollary}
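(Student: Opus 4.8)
\textbf{Proof plan for Corollary~\ref{finiteFacesLemma}.}

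The plan is to argue by contradiction: suppose $\gamma \in \text{Sys}(K)$ admits infinitely many face transitions. Since $S^1$ is compact, the set of face transitions then has an accumulation point $t^* \in S^1$. Near $t^*$ the characteristic $\gamma$ must repeatedly enter and leave relative interiors of faces of the finite polytope $K$; since there are only finitely many faces $F_1,\dots,F_N$, by the pigeonhole principle there is a single face $F$ whose relative interior is visited by $\gamma$ at times accumulating to $t^*$ from (say) the left, and likewise some face $F'$ (possibly equal to $F$) with the same property, and moreover the transitions between distinct faces near $t^*$ involve only finitely many faces. The key point is to extract, from this accumulation, a pair of extreme-ray normals $u$ (an outer facet normal belonging to $N_K(x)$ for $x$ in the relevant faces on one side) and $v$ (a facet normal on the other side) that are \emph{active} at an interleaved infinite sequence of times $t_1 < t_2 < t_3 < \cdots$ all close to $t^*$, with $u$ active at the odd-indexed times and $v$ at the even-indexed ones (or some similar alternating pattern), and with $\omega(u,v)\neq 0$. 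If instead every pair of facet normals that become active near $t^*$ has vanishing $\omega$-pairing, then all velocities near $t^*$ lie in a fixed isotropic subspace, and one should be able to rule out infinitely many transitions by a direct argument (the motion is essentially along a single coisotropic face, contradicting the definition of a face transition, or forces $\gamma$ to be non-minimizing).

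The heart of the argument is then Lemma~\ref{finiteNormalsLemma}. Having produced facet normals $u,v$ with $\omega(u,v)\neq 0$ such that $u$ is active at $t_1 < t_3$ and $v$ is active at some $t_2 \in (t_1,t_3)$, Lemma~\ref{finiteNormalsLemma} tells us that $v$ cannot be active at any time outside $(t_1,t_3)$. But the alternating structure near $t^*$ gives another time $t_2' \notin (t_1,t_3)$ — for instance a later transition time — at which $v$ is active, which is the desired contradiction. More carefully: I would first shrink to a small interval $I = (t^*-\delta, t^*+\delta)$ containing infinitely many transition times, pick two distinct faces $F_i, F_j$ that are each visited in relative interior at times accumulating to $t^*$, choose facet normals $u$ and $v$ that distinguish them (so $u$ is active on $A_{F_i}$-type times and $v$ on $A_{F_j}$-type times), verify $\omega(u,v)\neq 0$ is the relevant case, and then locate four interleaved active times $t_1 < t_2 < t_3 < t_4$ in $I$ with $u$ active at $t_1,t_3$ and $v$ active at $t_2,t_4$ — this contradicts Lemma~\ref{finiteNormalsLemma} directly.

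The main obstacle I anticipate is the bookkeeping needed to guarantee that the normals $u$ and $v$ are genuinely \emph{active} (in the precise technical sense, including the Lebesgue-point conditions) at the chosen interleaved times, rather than merely being normals of faces that $\gamma$ passes through. Being in the relative interior of a face $F$ at time $t$ forces $-J\dg(t) \in N_K(F)$, but one needs a positive coefficient on the specific extreme ray $u$ at a Lebesgue point; this requires choosing the sampling times within $A_{F_i}$ carefully (almost every such time is a Lebesgue point, and on $A_{F_i}$ with $F_i$ of positive-dimensional normal cone one must check a chosen extreme ray carries positive weight on a positive-measure subset). A secondary subtlety is the degenerate case in which every competing normal near $t^*$ pairs trivially under $\omega$ with every other: here one should observe that the maximal active set near $t^*$ spans a fixed isotropic subspace $W$, so $\gamma$ restricted near $t^*$ has velocities confined to $JW \cap N_K(\gamma(t))$, and argue that the relevant portion of $\partial K$ is then a single coisotropic face (or lies in one), so the purported transitions are not face transitions after all — contradicting the hypothesis.
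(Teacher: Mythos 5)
Your high-level strategy --- pigeonhole over the finitely many faces/normals followed by an application of Lemma~\ref{finiteNormalsLemma} to an interleaved pattern of active times --- is the same as the paper's, and the reduction you would eventually need (that for each face $F$ the set $\{t:\gamma(t)\in F\}$ has finitely many connected components) is exactly how the paper organizes the argument. However, there is a genuine gap at the heart of your plan: the mechanism producing a pair of normals with \emph{nonvanishing} $\omega$-pairing. You propose to take facet normals $u,v$ ``distinguishing'' the two alternately visited faces $F_i,F_j$ and to ``verify $\omega(u,v)\neq 0$ is the relevant case.'' Nothing forces this: first, $\gamma(t)\in\mathrm{relint}(F_j)$ does not make any particular normal of $F_j$ active (if $F_i\subset\overline{F_j}$, the active normals at times in $A_{F_i}$ may all already be normals of $F_j$); second, even when you do find active normals on either side of a transition, they may perfectly well $\omega$-commute. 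The paper's key idea, which is absent from your sketch, is that the pairing is not between normals of the two visited faces but between each active normal and a single (possibly never active) normal $w\in N_K(F)$ of the face being exited and re-entered: since $h_K(w)-\langle\gamma(t),w\rangle$ vanishes at the entry and exit times and is positive in between, the excursion forces an active $v$ with $\langle Jv,w\rangle<0$ on the way out and an active $u$ with $\langle Ju,w\rangle>0$ on the way back, and pigeonhole is then applied to the \emph{triple} $(u,v,w)$.

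Your fallback for the ``degenerate'' case is also not correct as stated. If all competing active normals near $t^*$ pairwise $\omega$-commute, it does not follow that the transitions ``are not face transitions after all'': the paper's definition of a face transition counts passages between the relative interiors of \emph{any} two distinct faces, including a coisotropic face $E$ and its proper subfaces, and a characteristic moving along $E$ can a priori touch lower-dimensional subfaces of $E$ repeatedly. Moreover the dichotomy ``either some interleaved active pair has $\omega\neq 0$, or all velocities near $t^*$ lie in a fixed isotropic subspace'' is not exhaustive, since the active sets may vary from one excursion to the next. Both difficulties dissolve once one adopts the paper's viewpoint: re-entering $F$ after leaving the supporting hyperplane of $w$ automatically supplies active normals pairing nontrivially with $w$, with no case split needed.
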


\begin{proof}
    Denote $\mathcal{N} := \{n_1, \ldots, n_k\}$ to be the unit outer normals to the facets of the polytope.
    Let $F = \{x \in \partial K : h_K(n_{i_j}) = \langle x,n_{i_j} \rangle, \; j=1,\ldots,m \}$ denote a face of $K$ which is the intersection of the supporing hyperplanes with normals $\{n_{i_1}, \ldots, n_{i_m}\}$.
    It is enough to prove that for each $F$, the set $\{ t\in S^1 : \gamma(t) \in F\}$ is a finite union of connected sets.

    Assume that $\gamma(t^{(l)}_1), \gamma(t^{(l)}_3) \in F$, and $\gamma(t^{(l)}_2) \not\in F,$ where $l \in \N$, and 
    $$t^{(1)}_1 < t^{(1)}_2 < t^{(1)}_3<t^{(2)}_1<t_2^{(2)}<t_3^{(2)}<\ldots.$$
    It is straightforward to check that there exists an active normal $v^{(l)} \in \mathcal{N}$ for some $t \in (t^{(l)}_1,t^{(l)}_2)$ and a normal $w^{(l)}_1 \in \mathcal{N} \cap N_K(F)$ with $\langle Jv^{(l)}, w^{(l)} \rangle < 0.$ Then there must be a normal $u^{(l)} \in \mathcal{N}$ which is active for some $t \in (t^{(l)}_2,t^{(l)}_3)$ and satisfies $\langle J u^{(l)}, w^{(l)} \rangle > 0.$
    Since there are a finite number of vectors in $\mathcal{N},$ there is some $u,v,w$ so that $u=u^{(l)}, v=v^{(l)}, w=w^{(l)}$ for infinite values of $l$, which contradicts Lemma~\ref{finiteNormalsLemma}. 
    
\end{proof}

\medskip

We are now ready to prove Proposition~\ref{noIsotropicGlidingThm}.

    \begin{proof}[Proof of Proposition~\ref{noIsotropicGlidingThm}]
        Suppose first that $K$ is a convex polytope, and let $\mathcal{N}$ be the set of normals to the facets of the polytope.
        Assume that $A \subset S^1$ is an isotropic gliding set for a characteristic $\gamma$ on $\partial K.$
        By the definition of $A$, there exist active sets with at least two normals for almost every $t \in A$.
        By Lemma~\ref{finiteFacesLemma}, for almost every $t \in A$, there is a face $F$ of $\partial K$, where $\gamma((t-\delta, t+\delta)) \subset \text{relint}(F)$ for some sufficiently small $\delta > 0.$
        If $\dg(s)$ is constant on $(1-\delta,1+\delta),$ by the definition of $A$, $\dg(s) \equiv \sum_{i=1}^m a_i n_i$ with $n_i \in \mathcal{N},$ $a_i > 0$, and $m > 1.$ Take 
        $$E = \{ x \in \partial K : h_K(n_i) = \langle x, n_i \rangle, \; i=1,\ldots,m \}$$ 
        to get $\gamma((t-\delta, t+\delta)) \subset E,$ and $\dg((t-\delta, t+\delta)) \subset J N_K(E).$
        Otherwise, let $$\mathcal{Q} :=  N_K(F) \cap N_K(F)^\omega = \{v \in N_K(F) : \omega(u,v) = 0, \; \forall u\in N_K(F)\}.$$ 
        From the fact that $\omega(v,-J\dg(s)) = \langle v, \dg(s) \rangle = 0,$ for all $v \in N_K(F),$ and $-J \dg(s) \in N_K(F)$ one has $-J\dg(s) \in \mathcal{Q}$ for almost every $s \in (t-\delta, t+\delta).$
        Let $$E = \{x \in \partial K: h_K(v) = \langle x, v \rangle \; \forall v\in \mathcal{Q}\}.$$
        One has $\mathcal{Q} \subset N_K(E)$, and hence $-J \dg((t-\delta, t+\delta)) \subset N_K(E).$
        Since $\mathcal{Q} \subset N_K(F)$, we have $F \subset E,$ and hence $\gamma((t-\delta, t+\delta)) \subset E.$ 
        Since $\dg(s)$ is not constant over $(t-\delta,t+\delta)$, $\dim N_K(E) >1,$ which completes the proof.

        \medskip
    
        Next assume that $K = T_1 \times T_2 \subset \R^n_q \times \R^n_p$ is a Lagrangian product, and $A \subset S^1$ is an isotropic gliding set for an action-minimizer $\gamma \in \text{Sys}(K).$
        From the billiard characterization of characteristics on Lagrangian products \cite{AA-O} we know that closed characteristics are Minkowski billiards in $T_1$ with respect to the norm $\|\cdot\|_{T_2^\circ}$ with minimal $T_2^\circ$-length.
        From the analysis of different billiard trajectories carried out in \cite{AA-O} (cf. \cite{capacity_mahler}), the only possible isotropic gliding in a positive measure set $A$ is when for $t \in A$, $\gamma(t)$ moves along $\partial T_1 \times \partial T_2$.
        Recall that closed billiard trajectories in $
T_1$ of minimal $T_2^\circ$-length correspond to length-minimizing closed polygonal curves with at most $n+1$ vertices that cannot be translated into the interior of $T_1$.
This was proved for the Euclidean case in Theorem 1.1 of~\cite{bezdek-bezdek}, and was later extended to Minkowski billiards in, e.g., \cite[Theorem 2.1]{non_symmetric_mahler} and~\cite[Theorem 1]{Mink-Bill-Rudolf}.
        Moreover, from the proofs ibid it is clear that if one starts from a $T_2^\circ$-length minimizer billiard trajectory $\pi_q\gamma$ which is not a polygonal curve, there is a choice of a polygonal curve $l$ composed out of $n+1$ points along $\pi_q \gamma$ which is a billiard trajectory of the same $T_2^\circ$-length. 
        One can check that $l$ corresponds to an action-minimizing closed characteristic $\gamma_l$ with the same action as $\gamma.$
        Suppose that the vertices of $l$ include the points $\pi_q \gamma(t_1), \pi_q \gamma(t_2).$
        Denote $\Delta q = \pi_q (\gamma(t_2) - \gamma(t_1)),$ and let $p^* \in T^2$ attain the support of $h_{T_2}(\Delta q) = \sup_{p \in T_2} \langle \Delta q, p\rangle = \langle \Delta q, p^* \rangle.$  
        We get
        $$ \int_{t_1}^{t_2} h_{T_2}(\pi_q \dg(t)) dt \geq \int_{t_1}^{t_2} \langle (\pi_q \dg(t), p^* \rangle = \langle \Delta q, p^* \rangle.$$
        By minimality of $\pi_q \gamma$ with respect to $T_2^\circ$-length, the above inequality is an equality, which holds only if 
        $$ p^* \in \arg\max_{p \in T_2} \langle \pi_q \dg(t), p \rangle \; \text{ for almost every }t\in (t_1,t_2).$$
        Hence $\pi_q \dg(t) \in N_{T_2}(p^*)$.
        Similarly, for $\pi_p \gamma$ we have a finite number of time intervals where $\pi_p \dg(t) \in N_{T_1}(q^*)$ for some point $q^*$ depending only on the interval. Hence, for almost every $t \in A$, we can find an intersection of the above intervals for $\pi_q \dg$ and $\pi_p \dg$ so that for some $t \in (a,b) \subset S^1,$ $\pi_q \dg(t) \in N_{T^2}(p^*)$ and $\pi_p \dg(t) \in N_{T^1}(q^*)$.
        Since $\dg(t)$ belongs to an isotropic subspace of $N_K(\gamma(t))$, one has $p^*\perp q^*,$ and hence $\gamma((t_1,t_2))$ moves along the coisotropic subspace $\{x \in \R^{2n} :h_{T_2}(p^*) = \langle \pi_q(x), p^* \rangle\} \cap \{ x\in \R^{2n} : h_{T_1}(q^*) = \langle \pi_p(x), q^* \rangle\}$.
        
    \end{proof}

\begin{remark}\label{polytopeGlidingRemark}
    If $K$ is a convex polytope with outer normals $\{n_1,\ldots,n_k\}$ and $\gamma$ is a characteristic (not necessarily action-minimizer or even closed). It still holds that for almost every $t$ where $\dg(t) = \sum_{j=1}^m a_j J n_{i_j}$ with $\omega(n_{i_j},n_{i_k}) = 0$, there exists a coisotropic face $E$ and an interval $I=(t,t+\delta)$ in which $\gamma(I) \subset E,$ but it is no longer true that one can choose $E$ so that $\dg(I) \subset J N_K(E).$
    Indeed, 
    assume that $\dg(t_0) = \sum_{j=1}^m a_j Jn_{i_j}$
        as there are a finite set of normals, outer semi-continuity of $N_K$ implies that there is $\delta>0$ small enough so that
        $$\bigcup_{s \in [t_0, t_0+\delta)} N_K(\gamma(s)) \subset N_K(\gamma(t_0)).$$        
        Consider the face 
        $$E = \partial K \cap \{x : \langle x, n_{i_j}\rangle = h_K(n_{i_j}), \quad j\in\{1,\ldots,m\}\}.$$
        From the fact that $\omega(n_{i_j}, n_{i_k}) = 0,$ $E$ is coisotropic.
        Let
        $$ \psi_i(t) = h_K(n_i) - \langle n_i, \gamma(t) \rangle \geq 0, \quad i \in \{1,\ldots, k\}.$$
        Consider the function
        $$\psi(t) = \sum_{j=1}^m a_{j}\psi_{i_j}(t).$$
        Since $\dg(s) \in J N_K(\gamma(t_0))$ for almost every $s \in (t_0, t_0 + \delta)$,
        $$ \frac{d \psi(s)}{ds} = - \langle \sum a_{j} n_{i_j}, \dg(s) \rangle =  \langle \dg(t_0), -J \dg(s) \rangle \leq 0.$$
        This follows from the fact that $\dg(t_0) \in T_K(\gamma(t_0)).$
        Hence $\psi \equiv 0$ on $(t_0, t_0 + \delta)$,  but as $a_{i_j} > 0,$ we have $\psi_{i_j} = 0$ for all $j \in \{1,\ldots, m\},$ and hence $\gamma((t_0, t_0 + \delta)) \subset E.$

        \medskip

        However, one can still construct examples for polytopes with isotropic gliding for non action-minimizers.
        Consider a convex polytope $K_1$ in $\R^4$ with an edge $E$ which is the intersection of three hyperplanes with normals $n_1,n_2,n_3$, with $\omega(n_i,n_j) \neq 0, \; i\neq j, \; i,j \in \{1,2,3\},$ such that $TE \subset JN_K(E),$ i.e. the direction of the edge is in the positive span of $Jn_1, Jn_2, Jn_3.$
        This is called a \emph{bad edge} in \cite{Ch-H}.
        Let $K_2 \subset \R^4$ be a different polytope with a Lagrangian face $F$ with $N_K(F) = \text{cone}\{n_4,n_5\}.$
        In the symplectic product $K_1 \times K_2 \subset \R^8,$ consider a point in $\text{relint}(E) \times \text{relint}(F)$.
        For a small interval $I$, choose a fat Cantor set $A \subset I$ with positive measure. There exists a characteristic with velocities along the edge $E$ for $t \in I \setminus A,$ and with nontrivial convex combination of $n_4,n_5$ for $t \in A.$
        We get that $A$ is an isotropic gliding set.

\end{remark}

\medskip\medskip

We proceed to the proof of Theorem~\ref{compactQuotientTopologyThm}. 
Let us first give the definition of the equivalence relation $\sim$.

We denote by $\Omega_L(\gamma) \subset S^1$ the open set of times $t$ for which $\gamma$ \emph{strictly} moves along a coisotropic face at time $t$.
Since $\gamma$ has positive action,
$\Omega_L(\gamma) \neq S^1.$ However, it may happen that $S^1 = \overline{\Omega_L(\gamma)}.$
\begin{definition}\label{coisotropicCollapseDef}
Let $\gamma_1,\gamma_2 \in \text{Sys}(K).$
We say that $\gamma_1$ and $\gamma_2$ are \emph{equal up to coisotropic faces} and write $\gamma_1 \sim \gamma_2$, if:
\begin{enumerate}
    \item $\Omega_L(\gamma_1) = \Omega_L(\gamma_2)$; denote this common set by $\Omega_L$.
    \item For all $t \in S^1 \setminus\Omega_L$, $\gamma_1(t) = \gamma_2(t).$
\end{enumerate}  
\end{definition}

 The condition of strictly moving along each coisotropic face guarantees that equivalent closed characteristics must be equal at those times when they pass from moving along one coisotropic face to another.

\medskip

\begin{definition}\label{pseudoMetricDef}
    
The quotient pseudo-metric between equivalence classes is defined by
\[
d_\sim([\gamma_1], [\gamma_2]) = \inf \sum_{i=1}^k d_{H^1}(z_i,w_i),
\]
where the infimum is taken over finite sequences $z_1,\dots,z_k, w_1,\dots,w_k \in \mathrm{Sys}(K)$ such that $z_1\sim \gamma_1$, $z_{i+1} \sim w_i$ for $i=1,\dots,k-1$, and $w_k \sim \gamma_2$.

\end{definition}

\medskip

We start with the following~lemma.

    \begin{lemma}\label{c0Impliesc1Lemma}
        Let $\gamma_k \in W^{1,2}([0,T],\partial K)$ be a sequence of characteristics converging $C^0$ to $\gamma$. Let $X \subset [0,T]$ be a compact set of positive measure so that for $t\in X$, $\dg(t)$ is on an extreme ray of $N_K(\gamma(t))$.
        Then
        $$\int_{X} \|\dg(t) - \dg_k(t) \|^2 dt \xrightarrow{k \to \infty} 0.$$  
    \end{lemma}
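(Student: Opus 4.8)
We have characteristics $\gamma_k \to \gamma$ in $C^0$. On the set $X$, $\dot\gamma(t)$ is on an extreme ray of $N_K(\gamma(t))$, which by the earlier discussion (Lemma~\ref{nonExtremeIsIsotropicLemma} and the active-normal analysis) means the dynamics is "stable" there. We want to conclude $L^2$ convergence of derivatives on $X$.

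**Key idea.** The velocities $\dot\gamma_k$ are uniformly bounded (they satisfy $h_K(-J\dot\gamma_k) = $ const via normalization, and $K$ is bounded with $0$ in interior), so $\dot\gamma_k$ is bounded in $L^2$, hence has a weakly convergent subsequence in $L^2$. Since $\gamma_k \to \gamma$ in $C^0$, any weak $L^2$ limit of $\dot\gamma_k$ must be $\dot\gamma$. So $\dot\gamma_k \rightharpoonup \dot\gamma$ weakly in $L^2([0,T])$. To upgrade weak convergence to strong convergence on $X$, it suffices to show $\|\dot\gamma_k\|_{L^2(X)} \to \|\dot\gamma\|_{L^2(X)}$ (norm convergence + weak convergence = strong in Hilbert space).

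**Using the extreme ray structure.** For $t \in X$, write $-J\dot\gamma(t) = \lambda(t) u(t)$ with $u(t)$ the unit generator of the extreme ray of $N_K(\gamma(t))$ and $\lambda(t) \geq 0$. Because $u(t)$ generates an *extreme ray*, it is the unique unit normal at $\gamma(t)$ in that direction, and $N_K$ is outer semicontinuous, so as $k \to \infty$ and $\gamma_k(t) \to \gamma(t)$, any unit vector in $N_K(\gamma_k(t))$ that is "close in direction" to $u(t)$ must actually be close to $u(t)$. But $-J\dot\gamma_k(t)/\|\dot\gamma_k(t)\| \in N_K(\gamma_k(t))$ need not a priori be close to $u(t)$ — this is exactly where one must work. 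The point is: since $\gamma_k(t)$ converges to a point where the normal cone is "thin" in the relevant direction (extreme ray), the normalized velocity directions of $\gamma_k$ at $t$ are forced to converge to $u(t)$ for a.e.\ $t \in X$. Then $\dot\gamma_k(t) \to \mu_k(t) J u(t)$ in direction. Combined with weak $L^2$ convergence (which pins down the "amount"), one gets $\dot\gamma_k \to \dot\gamma$ in $L^2(X)$.

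Let me think more carefully about why the *direction* converges.

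The plan is to reduce the statement to a soft weak-convergence argument on $[0,T]$ and then upgrade to strong convergence on $X$, using that along an extreme ray the normal cone is dual to an \emph{exposed} point of the polar body. Since our characteristics are normalized so that $h_K(-J\dg_k(t))=2$ for a.e.\ $t$ and $K$ is a bounded body with $0$ in its interior, the velocities $\dg_k$ are uniformly bounded in $L^\infty([0,T],\R^{2n})$, hence in $L^2$. Passing to a subsequence $\dg_k\rightharpoonup w$ weakly in $L^2$, and testing against $\mathbbm{1}_{[0,t]}$ together with $\gamma_k\to\gamma$ in $C^0$ gives $\gamma(t)=\gamma(0)+\int_0^t w$, so $w=\dg$; as the limit is independent of the subsequence, $\dg_k\rightharpoonup\dg$ weakly in $L^2([0,T],\R^{2n})$, in particular weakly in $L^2(X,\R^{2n})$. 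Now set $m_k(t):=-\tfrac12 J\dg_k(t)$ and $m(t):=-\tfrac12 J\dg(t)$; the normalization means exactly that $m_k(t),m(t)\in\partial K^\circ$ for a.e.\ $t$ (with $K^\circ$ the polar body), and $m_k\rightharpoonup m$ weakly in $L^2(X)$. Since for $x\in\partial K$ one has $N_K(x)\cap\{h_K=2\}=2\{q\in K^\circ:\langle q,x\rangle=1\}$, which is the face of $K^\circ$ exposed by $x$, and extreme points of a face of $K^\circ$ are extreme points of $K^\circ$, the hypothesis that $\dg(t)$ lies on an extreme ray of $N_K(\gamma(t))$ for $t\in X$ becomes: $m(t)$ is an extreme point of $K^\circ$ for a.e.\ $t\in X$.

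For the core step, assume first that $m(t)$ is an \emph{exposed} point of $K^\circ$ for a.e.\ $t\in X$, and choose — by a measurable selection argument of Jankov–von Neumann type, as in the proof of Theorem~\ref{extremeOrIsotropicThm} — a measurable map $t\mapsto z(t)\in S^{2n-1}$ with $\langle q,z(t)\rangle<\langle m(t),z(t)\rangle$ for every $q\in K^\circ\setminus\{m(t)\}$. Because $m_k(t)\in K^\circ$, we get $\langle m_k(t),z(t)\rangle\le\langle m(t),z(t)\rangle$ for a.e.\ $t\in X$, and for each $\rho>0$ the gap $\Delta_\rho(t):=\langle m(t),z(t)\rangle-\sup\{\langle q,z(t)\rangle:q\in K^\circ,\ \|q-m(t)\|\ge\rho\}$ is strictly positive (a strict max of a linear functional over a compact set). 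By weak convergence, $\int_X\langle m_k(t),z(t)\rangle\,dt\to\int_X\langle m(t),z(t)\rangle\,dt$, so $\int_X\bigl(\langle m(t),z(t)\rangle-\langle m_k(t),z(t)\rangle\bigr)\,dt\to 0$; the integrand is nonnegative and at least $\Delta_\rho(t)$ on $B_k^\rho:=\{t\in X:\|m_k(t)-m(t)\|\ge\rho\}$, whence $\int_{B_k^\rho}\Delta_\rho\to 0$. Splitting $X$ according to $\Delta_\rho(t)\ge 1/j$ (so $|X\setminus X_j|\to 0$ as $j\to\infty$) gives $|B_k^\rho|\to 0$ for every $\rho>0$, i.e.\ $m_k\to m$ in measure on $X$. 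Combined with the uniform bound, bounded convergence yields $m_k\to m$ in $L^2(X)$, that is $\int_X\|\dg_k-\dg\|^2=4\int_X\|m_k-m\|^2\to 0$, as claimed.

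The delicate point — which I expect to be the real heart of the argument, rather than the functional analysis above — is that an extreme point of a convex body need not be exposed, so the clause "assume $m(t)$ is exposed" must be justified or removed. It is automatic when $K$ is a polytope, since then $K^\circ$ is a polytope and every extreme point is a vertex, hence exposed; and whenever $\gamma$ only ``glides'' along faces of $K$ of codimension $\le 1$, the relevant points of $\partial K^\circ$ are exposed as well. In general one must isolate the set of times where $m(t)$ is a non-exposed extreme point of $K^\circ$ and, there, replace $z(t)$ by a $\rho$-dependent functional strictly separating $m(t)$ from $\{q\in K^\circ:\|q-m(t)\|\ge\rho\}$, carefully absorbing the slack coming from the thin cap around $m(t)$; this is the bookkeeping I expect to require the most care. (An alternative route, avoiding the polar reformulation, is a direct variation-of-the-orbit argument in the spirit of Lemma~\ref{normalsCrossLemma}, perturbing $\gamma_k$ on short intervals inside $X$; but the duality picture seems the cleanest way to organize the proof.)
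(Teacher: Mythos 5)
Your route is genuinely different from the paper's and, modulo the one step you yourself flag, it is correct. You pass to the polar body, observe that the normalization forces $m_k(t)=-\tfrac12 J\dot\gamma_k(t)\in K^\circ$, derive weak $L^2$ convergence $m_k\rightharpoonup m$ from the $C^0$ convergence and the uniform bound, and then upgrade to strong convergence on $X$ by testing against an exposing functional of the extreme point $m(t)$ --- essentially a Visintin-type ``weak convergence to extreme-point values implies strong convergence'' argument. The paper instead works pointwise in time: it fixes a (nearly) exposing direction $u(t)$ for the extreme ray, integrates the drift $\langle\gamma_k-\gamma,u\rangle$ over short intervals, and uses the $C^0$ bound on this drift to control the measure of the bad set where $\|\dot\gamma_k-\dot\gamma\|$ is large. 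Both proofs hinge on the same separation phenomenon, and both must confront the fact that an extreme point need not be exposed; the paper resolves this with Straszewicz's theorem (exposed points are dense among extreme points), accepting a small positive slack $\varepsilon_2$ on the near side and an error of order $\varepsilon_2/\varepsilon_0+\delta_0$ that is killed by letting $\delta_0\to0$.

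The gap you leave open closes more easily than you fear, and with exact rather than approximate separation: for an extreme point $m$ of the compact convex set $K^\circ$ and any $\rho>0$, the set $C_\rho:=\mathrm{conv}\bigl(K^\circ\setminus B(m,\rho)\bigr)$ is compact, convex, and does not contain $m$ (otherwise Carath\'eodory would exhibit $m$ as a convex combination of points of $K^\circ$ at distance at least $\rho$ from $m$, contradicting extremality). Strict separation of $m$ from $C_\rho$ produces exactly the $\rho$-dependent functional $z_\rho(t)$ you ask for, with no slack to absorb; your weak-convergence step then gives $|B_k^\rho|\to0$ for each fixed $\rho$, i.e.\ convergence in measure on $X$, and the uniform bound finishes the proof. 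One still has to check measurability of $t\mapsto z_\rho(t)$ and of $\Delta_\rho$, which follows from the same Jankov--von Neumann selection you already invoke. With that addition your argument is complete, and in some respects cleaner than the paper's: note that it uses the characteristic condition on $\gamma_k$ only through the normalization $m_k(t)\in K^\circ$, so the outer semicontinuity of $N_K$, which the paper leans on, plays no role in your version.
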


    \begin{proof}
    
    We normalize $K$ with $\ehzcap(K) = 1,$ and normalize $\gamma$ with $h_K(-J\dg(t)) = 2$ for almost every $t \in X.$
    For $x \in \partial K$ set $C(x) = J (N_K(x) \cap S),$
    where $S = \{ h_K(v)=2\}.$
    Note that $x \mapsto C(x)$ is outer semi-continuous, and $t \mapsto \dg(t) \in C(\gamma(t))$ is a measurable selection of an extreme point in $C(\gamma(t))$.
    
    The idea of the proof is to track a ``support vector drift" of $\gamma_k(t) - \gamma(t).$ Roughly speaking, since $\dg(t)$ is an extreme element of $C(\gamma(t))$ and $\dg_k(t)$ is very close to $C(\gamma(t))$ from outer semi-continuity, there exists a support vector $u(t)$ to $C(\gamma(t))$ such that one has that $\langle \dg_k(t)-\dg(t), u(t) \rangle$ is never positive, and whenever $\dg_k(t)$ is far enough from $\dg(t)$ it is negative and bounded away from $0$. In a tiny interval where $u(t)$ is nearly constant, the derivative of $\langle \gamma_k(t)-\gamma(t), u \rangle$ is the negative value $\langle \dg_k(t)-\dg(t), u(t) \rangle$, which forces $\gamma_k$ to move away from $\gamma$ in direction $-u$ in a way independent of $k$. This forces $\gamma_k$ to be $C^0$-far from $\gamma$ which yields a contradiction.

    \medskip

    More precisely, Straszewicz’s theorem states that exposed points are dense in the set of extreme points. Hence, for $\delta_0 > 0$ small we can find a measurable selection $u(t) \in \R^{2n}$ with $\|u(t)\| = 1$ and $\varepsilon_0, \varepsilon_2$ such that for almost every $t \in X$,
    $$ \langle n-\dg(t),u(t) \rangle < \varepsilon_2, \forall n \in C(\gamma(t)), \text{ and } \langle n-\dg(t), u(t) \rangle < -\varepsilon_0 \text{ whenever } \|n - \dg(t)\| \geq \delta_0 .$$
    Moreover, one can choose $\varepsilon_0, \varepsilon_2$ to satisfy $\frac{\varepsilon_2}{\varepsilon_0} \to 0$ as $\delta_0 \to 0$.
    The uniformity comes from the compactness of the set $\{ (t,n) \in X \times C(\gamma(t)) : \|n - \dg(t)\| \geq \delta_0 \}.$

    We will prove the lemma by showing that on a full measure subset of $X,$ $\lim_{k\to \infty}\|\dg(t) - \dg_k(t)\| \leq c ( \frac{\epsilon_2}{\varepsilon_0} + \delta_0),$ where $c>0$ is a constant. Taking the intersections of these subsets over smaller and smaller $\delta_0$ shows that the limit vanishes for almost every $t \in X.$ 
    In order to do so, it is enough to prove that for a sufficiently small interval $I'$ surrounding a Lebesgue point of $X$, 
    $$\int_{I'} \|\dg(t)-\dg_k(t)\|^2 dt < c (\frac{\varepsilon_2}{\varepsilon_0}+\delta_0) |I'| + f(|I'|) +  g(k, |I'|),$$
    where $c>0$ is a constant, $f(|I'|) = o(|I'|)$ as $|I'| \to 0,$ and $g(k, |I'|) = o(1)$ as $k \to \infty$.

\medskip

    Let $I'$ be such an interval with $|X \cap I'| \geq |I'| - \varepsilon/2,$ where $\varepsilon = o(|I'|).$
    Lusin's theorem gives a compact set $I \subset I'$ of measure $|I| > |I'|-\varepsilon$ where for almost every $t \in I$, $t \mapsto \dg(t)$ and $t \mapsto u(t)$ are uniformly continuous and $\dg(t)$ is on an extreme ray of $N_K(\gamma(t)).$
    Let $u'$ be a $C^1$ uniform approximation of $u$ with $\|u(t) - u'(t)\| \leq \rho$ for every $t \in I$, and $\|\dot{u}'(t)\| \leq C$ depending on $\rho$ for every $t \in I'$.
    From outer semi-continuity, for $k$ large enough, there is a uniform distance $d_{C^0}(\dg_k(t), C(\gamma(t))) < \min\{ \delta_0, \varepsilon_2\}$ for every $t \in I'$. In addition $\| \gamma_k(t) - \gamma(t)\| < \varepsilon_1 \ll \frac{\varepsilon_0}{C}.$
    Define the ``bad set"
    $$A = \{ t \in I: \|\dg(t) - \dg_k(t)\| > 2 \delta_0 \}.$$
    Since for $\dg_k(t)$ there exists $n(t) \in C(\gamma(t)),$ with $\|n(t) - \dg_k(t)\| < \min\{\delta_0, \varepsilon_2\},$ we have that for $t \in A,$
    $$ \| n(t) - \dg(t)\| \geq 2\delta_0 - \delta_0 = \delta_0.$$
    Hence
    $$ \langle \dg_k(t) - \dg(t),u(t) \rangle < -\varepsilon_0 + \|\dg_k(t)-n(t)\| \|u(t)\| < -\frac{\varepsilon_0}{2}.$$
    Define the ``drift" functional $\Phi(t) = \langle \gamma_k(t) - \gamma(t), u'(t) \rangle$ and note that $ | \Phi(t) | <  \varepsilon_1 (1+\rho).$
    We have
    $$ \dot{\Phi}(t) = \langle \dg_k(t) - \dg(t), u'(t) \rangle + \langle \gamma_k(t) - \gamma(t), \dot{u}'(t) \rangle < \langle \dg_k(t) - \dg(t), u'(t) \rangle + C\varepsilon_1.$$
    We also note that $\|\dg\|, \|\dg_k\| < C_1$ are uniformly bounded by the normalization $h_K(-J\dg) = h_K(-J\dg_k) = 2.$
    For $t \in A,$ we have 
    $$\langle \dg_k(t) - \dg(t), u'(t) \rangle <  -\frac{\varepsilon_0}{2} + 2C_1 \rho .$$
    For $t \in I \setminus A$ we have
    \begin{align*}
    \langle \dg_k(t) - \dg(t), u'(t) \rangle &= \langle \dg_k(t) - n(t), u(t) \rangle + \langle n(t) - \dg(t), u(t) \rangle + \langle\dg_k(t) - \dg(t), u'(t)-u(t)\rangle \\
    &< 2\varepsilon_2 + 2C_1 \rho .
    \end{align*}
    For $t \not\in I$ we still have a uniform bound
    $\dot{\Phi}(t) < C_2 + C \varepsilon_1.$
    Integrate $\dot{\Phi}$ over $I' = (t_1,t_2)$ to get
        $$ \Phi(t_2) - \Phi(t_1) = \int_{I'\setminus I} \dot{\Phi}(t)dt + \int_{I\setminus A} \dot{\Phi}(t) dt + \int_A \dot{\Phi}(t) dt.$$
The above bounds on $\dot{\Phi}$ and on $\Phi$ now readily yield
    $$ -2\varepsilon_1 (1+\rho) < (C_2+C\varepsilon_1) \varepsilon + (2 \varepsilon_2 + 2C_1 \rho) (|I| - |A|) + (-\frac{\varepsilon_0}{2} + 2C_1 \rho + C \varepsilon_1)|A| $$
    which bounds the measure of $A$ by
    $$ |A| < \frac{2 \varepsilon_1 (1+\rho) + \varepsilon (C_2+C \varepsilon_1)  + (2C_1 \rho+ 2\varepsilon_2 )|I'|}{\frac{\varepsilon_0}{2} + 2\varepsilon_2 - C\varepsilon_1}.$$
    We note that $|\{ t \in I' : \|\dg_k(t) - \dg(t)\| \geq 2\delta_0\}| < |A| + \varepsilon$,
    Finally, from the uniform bound $\|\dg_k(t)\|,\|\dg(t)\| \leq C_1,$ one has
    $$ \int_{I'} \| \dg(t) - \dg_k(t)\| dt < 2 \delta_0 |I'| + 2 C_1 (|A| + \varepsilon).$$
    This proves the required bound by sending first $\varepsilon_1$, then $\rho$, and then $\varepsilon$ to zero.
    
    \end{proof}

\begin{lemma}\label{coisotropicFaceMeansStrictlyLemma}
    Let $K \subset \R^{2n}$ be convex, and suppose that $\gamma \in \text{Sys}(K)$ moves along a coisotropic face $E$ at time $t$ for $t \in (a,b)$, and $\dg(t)$ is not a constant vector which lies on an extreme ray of $N_K(E)$. Then there exists a perturbation $\gamma' \in \text{Sys}(K)$ arbitrarily $H^1$-close to $\gamma$ that strictly moves inside a coisotropic face $E'$ at time $t$ for $t \in (a,b).$
    Moreover, one can choose $\gamma'$ and $E'$, so that $\gamma'(s) + \varepsilon(s) J n \in E'$ for every $s \in (a,b),$ $n \in N_K(E')$ and $\varepsilon(s) > 0$ sufficiently small.
    In particular, one can choose $\dg'(s)$ to never lie on extreme rays of $N_K(E').$
\end{lemma}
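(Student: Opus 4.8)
The plan is to push $\gamma$ slightly into the relative interior of a suitable coisotropic face via a convex interpolation, which yields $H^1$-closeness and strictness at once. First I would normalize ($\ehzcap(K)=1$, $h_K(-J\dg)\equiv2$) and restrict to $I=(a,b)$, where $\gamma(s)\in E$ and $-J\dg(s)\in N_K(E)$ for a.e.\ $s$; write $V:=JN_K(E)$ (an isotropic subspace with $V\subseteq TE$), $\mathcal V:=\{v\in V:h_K(-Jv)=2\}$ (so $\dg(s)\in\mathcal V$ a.e.), and note that since all velocities lie in $V$ the arc $\gamma(I)$ lies in $\gamma(a)+V$ and $m:=\tfrac1{b-a}(\gamma(b)-\gamma(a))=\tfrac1{b-a}\int_a^b\dg$ lies in $\mathcal V$. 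Then I would fix the target face: let $E'$ be a coisotropic face of $\partial K$ with $\gamma(I)\subseteq E'$ that is minimal with this property; since $E$ itself qualifies one has $E'\subseteq E$, hence $\dg(s)\in V\subseteq JN_K(E')\subseteq TE'$, $\gamma$ also moves along $E'$ on $I$, $h_K$ is linear on $N_K(E')$, and $\mathcal V':=\{v\in JN_K(E'):h_K(-Jv)=2\}$ is a nonempty affine set containing $\dg(s)$ a.e.\ and $m$.

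The core device is the following. Suppose I can produce an auxiliary curve $\beta\in W^{1,2}([a,b],E')$ with $\beta(a)=\gamma(a)$, $\beta(b)=\gamma(b)$, $\dot\beta(s)\in\mathcal V'$ for a.e.\ $s$, and $\beta(s)\in\operatorname{relint}(E')$ for all $s\in(a,b)$. For $\theta\in(0,1]$ let $\gamma'_\theta$ equal $\gamma$ outside $I$ and $(1-\theta)\gamma+\theta\beta$ on $I$. Then $\gamma'_\theta$ is continuous, lies in $E'$ on $I$, has $-J\dg'_\theta(s)=(1-\theta)(-J\dg(s))+\theta(-J\dot\beta(s))\in N_K(E')\subseteq N_K(\gamma'_\theta(s))$ --- so it is a characteristic moving along $E'$ on $I$, hence an action-minimizer by the Remark after Definition~\ref{coisotropicFaceDef} --- and keeps the normalization $h_K(-J\dg'_\theta)\equiv2$ because $h_K$ is linear on $N_K(E')$. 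Moreover $\|\gamma'_\theta-\gamma\|_{H^1}\le\theta\,\|\beta-\gamma\|_{H^1}\to0$ as $\theta\to0$, and --- \emph{crucially} --- for $s\in(a,b)$ the point $\gamma'_\theta(s)$ is a convex combination with positive weight on $\beta(s)\in\operatorname{relint}(E')$, so $\gamma'_\theta(s)\in\operatorname{relint}(E')$ and therefore $D_K(\gamma'_\theta(s))=JN_K(E')\cap T_K(\gamma'_\theta(s))\subseteq JN_K(E')\subseteq TE'$; that is, $\gamma'_\theta$ strictly moves along $E'$ on $(a,b)$. This also yields the first ``moreover'' clause: since $\gamma'_\theta(s)\in\operatorname{relint}(E')$ and $Jn\in JN_K(E')\subseteq TE'$ for $n\in N_K(E')$, the point $\gamma'_\theta(s)+\varepsilon Jn$ stays in $\operatorname{relint}(E')\subseteq E'$ for small $\varepsilon>0$.

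The remaining point --- and the one I expect to be the main obstacle --- is the construction of $\beta$, i.e.\ moving from $\gamma(a)$ to $\gamma(b)$ through $\operatorname{relint}(E')$ with velocities in the admissible set $\mathcal V'$. Reducing to a short piecewise-linear (plus bump) path and using that $\R_{>0}\mathcal V'=\{w\in JN_K(E'):h_K(-Jw)>0\}$ while $\gamma(b)-\gamma(a)=(b-a)m$ with $h_K(-Jm)=2>0$, this comes down to the geometric statement $(\gamma(a)+JN_K(E'))\cap\operatorname{relint}(E')\neq\emptyset$ (together with its mirror at $\gamma(b)$; the interior junction is handled by inserting a tiny segment in the direction $m$). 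This is trivial when $\gamma(a)\in\operatorname{relint}(E')$. In general I would induct on the codimension of the face: were the intersection empty, $\gamma(a)+V$ would stay in $\partial E'$ near $\gamma(a)$, forcing $\gamma$ to move on a smaller subinterval along the proper sub-face $E''\subsetneq E'$ containing $\gamma(a)+V$; minimality of $E'$ makes $E''$ non-coisotropic, and then Theorem~\ref{extremeOrIsotropicThm} forces $\dg$ to be constant and to lie on an extreme ray of $N_K(E'')$, hence of $N_K(E)$ --- contradicting the hypothesis that $\dg$ is not a constant vector on an extreme ray of $N_K(E)$. (If $\gamma(a)$ is a vertex of the relevant sub-face one runs this argument at an interior time of $I$ instead of at $a$.) Finally, for the last ``moreover'' clause I would choose the velocities of $\beta$ --- compatibly with the requirements above, and hence $\dg'_\theta(s)=(1-\theta)\dg(s)+\theta\dot\beta(s)$ --- so that $-J\dot\beta(s)$, and thus $-J\dg'_\theta(s)$, lies in the relative interior of the smallest face $N^\ast$ of $N_K(E')$ containing all the normals $-J\dg(s)$; since $N^\ast$ is at least $2$-dimensional by the hypothesis and pointed (as $K$ is a body), its relative interior avoids every extreme ray of $N_K(E')$, so $-J\dg'_\theta(s)$ is never on an extreme ray of $N_K(E')$.
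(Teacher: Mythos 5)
Your convex-interpolation device is sound as far as it goes: given an auxiliary curve $\beta$ with velocities in $\mathcal V'$ passing through $\operatorname{relint}(E')$, the curves $\gamma'_\theta=(1-\theta)\gamma+\theta\beta$ are indeed normalized action minimizers (by linearity of $h_K$ on $N_K(E')$ and the Remark after Definition~\ref{coisotropicFaceDef}) that strictly move along $E'$. This corresponds to the easy case of the paper's proof, where some point of $\gamma((a,b))$ already lies in the relative interior of the face and one interpolates with straight segments. The genuine gap is exactly where you anticipated it: the existence of $\beta$. Your fallback argument --- that if $(\gamma(a)+JN_K(E'))\cap\operatorname{relint}(E')=\emptyset$ then Theorem~\ref{extremeOrIsotropicThm} forces $\dg$ to be a constant vector on an extreme ray --- does not hold. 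Theorem~\ref{extremeOrIsotropicThm} only says that a.e.\ velocity decomposes into pairwise $\omega$-orthogonal normals; since $-J\dg(s)$ already lies in the isotropic cone $N_K(E)$, it yields nothing new here, and non-coisotropy of the sub-face $E''$ does not force constancy. Concretely, $\gamma$ can be confined to a proper non-coisotropic face $E''\subsetneq E'$ whose tangent space nevertheless contains the whole isotropic subspace spanned by $JN_K(E')$ (e.g.\ in $\R^6$, $TE''=JW\oplus\R v$ with $W=\operatorname{span}N_K(E')$ two-dimensional and $v\notin(JW)^{\omega}$), in which case $\dg$ varies freely in a two-dimensional cone without ever being able to enter $\operatorname{relint}(E')$. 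Even in $\R^4$, $\dg$ can be constant along an edge but equal to $cJ(n_1+n_2)$, i.e.\ \emph{not} on an extreme ray --- a case your hypothesis does not exclude and your contradiction does not reach. (A further soft spot: minimality of $E'$ is among coisotropic faces containing all of $\gamma(I)$, whereas your $E''$ only contains $\gamma$ near $a$, so coisotropy of $E''$ would not contradict minimality either.)

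The paper closes precisely these cases with two mechanisms absent from your argument. First, when every $Jn_i$ is tangent to $K$ along the sub-face $F$ containing $\gamma([a,b])$ (i.e.\ $\langle\nu,Jn_i\rangle\le0$ for all $\nu\in N_K(F)$ and all $i$), one does not push into $\operatorname{relint}(E')$ at all: one perturbs $\gamma$ into $\operatorname{relint}(F)$ and verifies the strictness condition $D_K\subset TE$ directly there. Second, when some $Jn_i$ fails to be tangent along $F$, the paper invokes Clarke duality: reordering the velocities of $\gamma$ on $(a,b)$ so that $Jn_i$ comes first produces a \emph{translated} action minimizer $\gamma_i$, whose translation vector $v_i\in TE$ relocates the curve to a point where $Jn_i$ is an admissible tangent direction, and convex combinations of $\gamma$ with the $\gamma_i$ give the desired perturbation. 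This use of minimality through translated reorderings is the key idea your proposal is missing. A secondary, more technical issue is the endpoint matching in your three-piece construction of $\beta$: after inserting the initial and final excursions, realizing the remaining displacement with velocities in $\mathcal V'$ is not automatic when $m$ lies on the relative boundary of $\mathcal V'$.
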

\begin{proof}
    Suppose that $\gamma \in \text{Sys}(K)$ moves along a coisotropic face $E$ at time $t$ for every $t \in (a,b)$.
    Write $\dg(t) = \sum_{i=1}^m a_i(t) J n_i$ for $\{n_1,\ldots, n_m\}$ extreme points of $N_K(E),$ and such that for every $i$ there exists $t \in (a,b)$ with $a_i(t) > 0.$
    We can assume without loss of generality that 
    $$E= \left\{x \in \partial K : h_K(n_i) = \langle x, n_i \rangle, \; \forall i\in \{1,\ldots,m\} \right\}. $$
    If for some $t$, $\gamma(t) \in \text{relint}(E)$, then the straight segments $l_1 = [\gamma(a), \gamma(t)]$ and $l_2 =[\gamma(t), \gamma(b)]$ both lie inside the relative interior, and one can, using a convex combination, connect $l_1\#l_2$ with $\gamma$ to get a small perturbation $\gamma'$ with $\gamma'([a,b]) \subset \text{relint}(E).$
    Hence we can assume $\gamma([a,b]) \subset F$ for some face $F$ of $E$.
    Indeed, if $\gamma(t_1)$ and $\gamma(t_2)$ do not lie on the same face, then again a convex combination between $\gamma$ and the straight segment $[\gamma(t_1), \gamma(t_2)]$ will put some part of $\gamma$ in the interior. 

    It is a useful fact that $\gamma$ remains an action minimizer after reordering the velocities on $(a,b)$. 
    Indeed, let $\beta_i = \int_a^b a_i(t)dt.$ Then for every $\phi: [a,b] \to (0,\infty)^m$ measurable with $\int_a^b \phi(t)dt = (\beta_1,\ldots,\beta_m)$, define $\gamma^\phi$ by
    \[
\gamma^\phi(s) := \gamma(a) + \int_{a}^s \phi(r)\,dr,
\quad s \in (a,b).
\]
Then one can check that, up to time reparamterization, $\gamma^\phi$ is an extremizer of the Clarke functional. This is due to the fact that $\dg(t) \in J N_K(E)$ for $t \in (a,b).$ 
By Clarke's duality, 
for every $\phi$ the curve $\gamma^\phi$ is a translated action-minimizing closed characteristic on $\partial K$, moving along $E$.
This follows from the fact that $E$ is composed of the supporting hyperplanes with normals that participate in $\dg(t)$ for some $t \in (a,b).$

    If for every normal $\nu \in N_K(F)$ and every $i$, $\langle \nu, J n_i \rangle \leq 0,$ then in particular $D_K(\gamma(t)) \subset TE,$ and, maybe after a small perturbation which guarantees that $\gamma((a,b)) \subset \text{relint}(F)$, a sufficiently small translation in direction $J n_i$ remains inside $E$.
    In this situation, we set $\gamma'=\gamma$ and we are done.

    Set $\gamma_i$ to be a reordering of $\dg$ so that $\dg_i$ is equal to $J n_i$ for a very small time interval near $a$, and then follow arbitrarily close to $\dg$.
    Let $v_i = \gamma_i(a) - \gamma(a)$ be the translation, and note that $v_i \in TE.$
    Since $J n_i \in D_K(\gamma_i(a)),$ we have that for every $\nu \in N_K(\gamma_i(a))$, $\langle \nu, Jn_i \rangle \leq 0.$
    This is the case after an arbitrarily small translation in the direction $v_i$. In addition, note that as $\dg(s) = \dg_i(s)$ for $s \not\in(a,b)$, we have that $v_i \in T_{\gamma(s)} \partial K$ for every $s$, and $v_i \perp -J\dg(s)$ for $s \not\in (a,b)$. 
    Hence any convex combination of $\gamma$ and $\gamma_i$ belongs to $\text{Sys}(K)$, and one can choose it to lie  arbitrarily close to $\gamma$. 

    Repeating this process for every $i=1,\ldots,m$, we get the required.

\end{proof}

    \begin{proof}[Proof of Theorem~\ref{compactQuotientTopologyThm}]
    Let $\gamma_k \in \text{Sys}(K)$ be a sequence of action-minimizing closed characteristics normalized with $h_K(\dg(t))=2.$
    From \cite[Proposition~2.1]{Matijevic}, $\gamma_k$ has a converging subsequence in the $C^0$ topology, also denoted $\gamma_k$.
    Denote its limit by $\gamma$.
    Denote by $\Omega \subset S^1$ to be the set of times where $\gamma$ moves along a coisotropic face. 
    For $t \in \Omega(\gamma),$ let $E(t)$ be the maximal (with respect to inclusion) coisotropic face of $\partial K$, where $\gamma$ moves along $E=E(t)$ at time $t$. By definition of moving along $E$, there is an interval surrounding $t$ where $\gamma \subset E$ and $\dg \subset N_K(E)$. Denote by $(a(t),b(t))$ the maximal such interval.
    One can cover $\Omega$ up to measure $\frac{1}{k}$ using a finite set $\mathcal{I}_k$ of such intervals.
    We remove from $\mathcal{I}_k$ those intervals where $\dg(t)$ is constant and lies on an extreme ray of $J N_K(E(t)).$
    For each interval $(a,b) \in \mathcal{I}_k$, using Lemma~\ref{coisotropicFaceMeansStrictlyLemma}, we sequentially approximate $\gamma$ to get a $\gamma'_k \in \text{Sys}(K)$ that satisfies the conditions in Lemma~\ref{coisotropicFaceMeansStrictlyLemma} for every interval in the cover.
    Our goal now is to find $\tilde{\gamma}_k$ that would satisfy $\tilde{\gamma}_k \sim \gamma'_k$ and $d_{H^1}(\gamma_k, \tilde{\gamma}_k) \ll 1.$
    Let $E(t)$ now denote the corresponding coisotropic face of $\gamma'_k$ for whenever $t$ belongs to an interval $(a,b) \in \mathcal{I}_k.$
        
    Define the measurable function $v_k(t) = \text{proj}_{JN_K(E(t))}(\dg_k(t))$. 
    Let $\tilde{\gamma}_k$ be defined as follows. For $t \notin \bigcup_{I \in \mathcal{I}_k} I, \quad \tilde{\gamma}_k(t) = \gamma'_k(t)$. For $t \in (a,b)$ we set 
        $$\tilde{\gamma}_k(t) = \gamma'_k(a) + \int_a^t v_k(s) -\zeta ds ,$$
        where 
        $$\zeta = \frac{\int_a^b v_k(s) ds - (\gamma'_k(b) - \gamma'_k(a))}{b-a}.$$

    We note that, maybe after a small adjustment of $\gamma'_k$ to not follow extreme points of $JN_K(E),$ one has for $k$ sufficiently large, that $\tilde{\gamma}_k \sim \gamma'_k$. 
    From outer semi-continuity of $N_K(x)$ together with compactness of $S^1$, $v_k(t)$ is uniformly arbitrarily close to $\dg_k(t)$ for sufficiently large $k$. On the other hand, from $C^0$ convergence, we know that $\gamma_k(a), \gamma_k(b)$ are arbitrarily close to $\gamma'_k(a), \gamma'_k(b)$ respectively. We get that  $\tilde{\gamma}_k$ is $H^1$-close to $\gamma_k$ for almost every $t \in \bigcup_{I \in \mathcal{I}_k} I$. 
    Lemma~\ref{c0Impliesc1Lemma} then readily implies that $$\int_{S^1 \setminus \Omega(\gamma)} \|\dot{\tilde{\gamma}}_k(t) - \dg_k(t) \|^2 dt \xrightarrow{k \to \infty} 0.$$ 
    Hence, since $|\Omega| - |\bigcup_{I \in \mathcal{I}_k} I|$ was chosen arbitrarily small, we have $d_{H^1}(\gamma_k, \tilde{\gamma}_k) \to 0,$ and the fact that $\tilde{\gamma}_k$ is equal up to coisotropic faces to $\gamma'_k$, which is arbitrarily $H^1$-close to $\gamma$, proves the required.

    \end{proof}

\section{Dynamical characterization of Cuts Additivity}\label{dynCutSection}

    The goal of this section is to prove Proposition~\ref{additiveDescriptionThm} and provide a combinatorial description for the case of convex polytopes.

\begin{proof}[Proof of Proposition~\ref{additiveDescriptionThm}]

    Suppose that $\gamma$ is an action minimizing closed characteristic that hits $H$ at $t_1,t_2 \in S^1$ and $\gamma(t_2) - \gamma(t_1) \in \R_+ J n$. 
    Let $\gamma_1 = \gamma([t_1,t_2]) \# [\gamma(t_2),\gamma(t_1)],$ and $\gamma_2 = \gamma([t_2,t_1]) \# [\gamma(t_1),\gamma(t_2)].$ We note that $\gamma_1$ and $\gamma_2$ are closed characteristics on the boundary of $\partial K_1$ and $\partial K_2$ respectively, and hence $\mathcal{A}(\gamma_i) \geq \ehzcap(K_i), \; i=1,2$.
    On the other hand we have $\mathcal{A}(\gamma_1) + \mathcal{A}(\gamma_2) = \mathcal{A}(\gamma) = \ehzcap(K).$ using Theorem~1.8 from \cite{pazit} (subadditivity for hyerplane cuts), we have $\ehzcap(K) \leq \ehzcap(K_1) + \ehzcap(K_2),$ and hence $\gamma_1$ and $\gamma_2$ must be action-minimizers, and $\ehzcap(K) = \ehzcap(K_1) + \ehzcap(K_2).$

    In the other direction, let $H$ be a hyperplane cut of $K$ into $K_1$ and $K_2$ with unit normal $n$, such that $\ehzcap(K) = \ehzcap(K_1) + \ehzcap(K_2).$ Assume without loss of generality that $0 \in H.$
    Let $\gamma_1$ and $\gamma_2$ be action one minimizers of $\Phi_{K_1}, \Phi_{K_2}$ respectively. 
    Since $\ehzcap(K_i) < \ehzcap(K),$ we know that both $\gamma_1$ and $\gamma_2$ have parts $l_1, l_2 \subset S^1$ with velocity $J n, -Jn$ respectively.
    Following \cite{pazit}, one can choose $\gamma_1$ and $\gamma_2$ so that $l_1$ and $l_2$ are connected. 
    Denote $a_1 = |\gamma_1(l_1)|,a_2 =|\gamma_2(l_2)|$ their respective lengths.
    Let $\tilde{\gamma}_1 = a_2 \gamma_1,$ $\tilde{\gamma}_2 = a_1 \gamma_2 + x_0,$ where $x_0$ is chosen so that $\tilde{\gamma}_2(l_2)$ is glued to $\tilde{\gamma}_1(l_1).$
    Define $\gamma = \tilde{\gamma}_1 \# \tilde{\gamma}_2$ the concatenation of $\tilde{\gamma}_1$ and $\tilde{\gamma}_2$ along $\tilde{\gamma}_1(l_1),$ reparametrized to have period 1.
    Note that since $0 \in H$, one has $h_{K_i}(\dg_i(t))|_{l_i} = 0,$ for $i=1,2.$
    On the other hand for $t \not \in l_i$, $h_{K_i}(\dg_i(t)) = h_K(\dg_i(t)).$
    We thus have $a_2 \Phi_{K_1}(\gamma_1) + a_1\Phi_{K_2}(\gamma_2) = \Phi_K(\gamma)$, and hence

    $$a_2 \ehzcap(K_1)^{1/2} + a_1 \ehzcap(K_2)^{1/2} = \Phi_K(\gamma) \geq \left(\mathcal{A}(\gamma) \ehzcap(K) \right)^{1/2} = ( (a_1^2 + a_2^2) \ehzcap(K))^{1/2}. $$
    After squaring both sides and rearranging one gets
    $$ -(a_1 \ehzcap(K_1)^{1/2} - a_2 \ehzcap(K_2)^{1/2})^2 \geq 0.$$
    This is only possible if $a_1 \ehzcap(K_1)^{1/2} = a_2 \ehzcap(K_2)^{1/2}$, and if the above inequalities are equalities. In particular, the lengths of the segments of the corresponding closed characteristics on $\partial K_1$ and $\partial K_2$ on the hyperplane $H$ agree, and $\frac{\gamma}{\mathcal{A}(\gamma)}$ is a minimizer of $\Phi_K$ and thus corresponds to a closed characteristic on $\partial K$. 
    In particular, we get that part of this closed characteristic is a translation $x_0 \in \R^{2n}$ of the part of the closed characteristic $z_1$ corresponding to $\gamma_1$ on $\partial K_1$ which lies outside $H$.
    From the fact $u \in N_K(x_1) \cap N_K(x_2) \implies \langle u, x_2 - x_1 \rangle = 0,$ we have that $x_0 \perp -J \dot{z}_1$ everywhere on $\partial K$, and one can integrate over $z_1 \cap \partial K$ to get $x_0 \perp -J \int \dot{z}_1(t) dt = Cn$, for some $C>0$. Hence  $\frac{\gamma}{\mathcal{A}(\gamma)}$ indeed splits along $H$ to closed characteristics on $\partial K_1$ and $\partial K_2$ respectively.
    
\end{proof}

We now discuss a combinatorial description of Proposition~\ref{additiveDescriptionThm} for convex polytopes.
For this let us recall the formula from \cite[Theorem~1.1]{pazit}:
Let $K$ be a convex polytope. Then
\begin{equation}\label{formulaEq} \ehzcap(K) = \frac{1}{2} \left[ \max_{(\beta_i, n_i)_{i=1}^k \in M(K)} \sum_{1\leq j < i < k} \beta_i \beta_j \omega(n_i,n_j)  \right]^{-1},\end{equation}
where 
$$ M(K) = \left\{ (\beta_i, n_i)_{i=1}^k : n_i \text{ unit outer normal to } \partial K, \beta_i\geq0, \sum \beta_i h_K(n_i) = 1, \sum \beta_i n_i = 0 \right\} .$$
Here $M(K)$ includes all finite sequences and we allow for repetitions.
We note that \cite[Lemma~3.5]{pazit} proves that there exists a maximizing sequence for which each unit outer normal of $\partial K$ appears at most once.

\begin{definition}\label{combinatorialCutDef}
    Let a convex polytope $K$ be cut by a hyperplane $H$ with normal vector $v$ into $K_1, K_2$, at height $h_K(v) - t$, for $t \in (0, h_K(v))$.
     A maximizing sequence $(\beta_i, n_i)_{i=1}^k \in M(K)$ of \eqref{formulaEq} is a combinatorial cut for $(v,t)$, if there exists $1 < m < k$ satisfying that $n_1,\ldots,n_m$ is a sequence of unit outer normals of $K_1$, and $\sum_{i=1}^m \beta_i n_i = c v $, with 
     \begin{equation} \label{cEq}
     c = \frac{\sum_{i=1}^m \beta_i h_K(n_i) - 2 \ehzcap(K) \sum_{1\leq i < j \leq m} \beta_i \beta_j \omega(n_i,n_j)}{h_K(v) - t}.
     \end{equation}
\end{definition}

\begin{remark}
    The sequence $(\beta_i,n_i)_{i=1}^m$ together with the normal $-v$, after appropriate scaling, comprises a maximizing sequence in \eqref{formulaEq} for $K_1,$ while $(\beta_i,n_i)_{i=m+1}^k$ together with the normal $v$ does the same for $K_2$.
\end{remark}
\begin{remark}\label{arbitraryCutRemark}
    Let us analyze the constraint \eqref{cEq}.
    First note that one always has $\sum_{i=1}^m \beta_i h_K(n_i) \geq c h_K(v),$ with equality if and only if there exists $x \in \partial K,$ with $n_1,\ldots,n_m,v \in N_K(x).$
    Suppose that there exists such an $x$.
    Then if one is able to find an element $(\beta_i,n_i) \in M(K)$ satisfying $\sum_{i=1}^m \beta_i n_i = c v $ for some $c>0$, and 
    $$0 < \sum_{1\leq i < j \leq m} \beta_i \beta_j \omega(n_i,n_j) < \frac{c h_K(v)}{2\ehzcap(K)},$$ 
    then there exists $0 < t < h_K(v)$ so that $(\beta_i,n_i)$ is a combinatorial cut for $(v,t).$
    Moreover, if one is able to multiply the coefficients $(\beta_i)_{i=1}^m$ by a small constant $\lambda$ and still find a corresponding maximizing element in $M(K)$, then there exists a combinatorial cut for every small enough $t$. 
    
\end{remark}

\begin{lemma}\label{combinatorialCutLemma}
    A convex polytope $K$ is additive with respect to a hyperplane cut with normal $v$ at height $h_K(v)-t$ if and only if there exists a combinatorial cut for $(v,t)$. Moreover, consider the set $B$ of closed characteristics $\gamma$ which are cut by $H$ with $\gamma(0), \gamma(T) \in H$ and $\gamma(T) - \gamma(0)$ parallel to $J v$. 
    Then $B$ is homotopically equivalent to the space of combinatorial cuts.
    
\end{lemma}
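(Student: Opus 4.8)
The plan is to reduce the statement to Proposition~\ref{additiveDescriptionThm} together with the polytope formula \eqref{formulaEq}, working through both the ``if and only if'' and the homotopy equivalence in parallel. First I would set up the correspondence between data: on one side, a closed characteristic $\gamma$ on $\partial K$ cut by $H$ into action-minimizers $\gamma_1,\gamma_2$ for $\partial K_1,\partial K_2$ as in Proposition~\ref{additiveDescriptionThm}; on the other, a maximizing sequence $(\beta_i,n_i)_{i=1}^k\in M(K)$ of \eqref{formulaEq} split at index $m$. Recall from \cite[Theorem~1.1]{pazit} and \cite[Lemma~3.5]{pazit} that for a polytope the maximizers of \eqref{formulaEq} are exactly the normals-and-velocity-lengths data of action-minimizing closed characteristics (each $\beta_i$ is the time spent with velocity $Jn_i$, up to the overall normalization), so a maximizing sequence \emph{is} a closed characteristic up to reordering of its velocity segments, which by the ``reordering remark'' in the proof of Lemma~\ref{coisotropicFaceMeansStrictlyLemma}-type arguments (and directly in \cite{pazit}) does not change membership in $\mathrm{Sys}(K)$. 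I would first verify that the combinatorial cut condition $\sum_{i=1}^m\beta_i n_i = cv$ is precisely the statement that, after reordering so that segments $1,\dots,m$ come first, the chord $\gamma(t_2)-\gamma(t_1)$ joining the two visits to $H$ lies in $\R_+\, Jv$: indeed $\gamma(t_2)-\gamma(t_1) = J\big(\sum_{i=1}^m \beta_i n_i\big)$ up to the normalization constant, so this chord is a positive multiple of $Jv$ iff $\sum_{i=1}^m\beta_i n_i \in \R_+ v$. Then the value of $c$ in \eqref{cEq} is forced by computing the height at which $H$ must sit for $\gamma_1$ (the piece over normals $1,\dots,m$ closed up with the segment along $H$) to be a closed characteristic of $\partial K_1$ whose action equals $\ehzcap(K_1)$ — this is exactly the bookkeeping of splitting $\mathcal{A}(\gamma)=\mathcal{A}(\gamma_1)+\mathcal{A}(\gamma_2)$ using $h_{K}(n_i)$ versus $h_{K_1}(-v)$, i.e.\ the identity $\sum\beta_i h_K(n_i) = c\,h_K(v) + c\,(t - ?)$ together with the area term $\sum_{i<j}\beta_i\beta_j\omega(n_i,n_j)$; so the ``iff'' is a translation of Proposition~\ref{additiveDescriptionThm} into the polytopal normals language, with \eqref{cEq} encoding at which height $h_K(v)-t$ the hyperplane realizes the split.

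More concretely: given a combinatorial cut for $(v,t)$, the remarks following Definition~\ref{combinatorialCutDef} produce maximizing sequences for $K_1$ and $K_2$ (append $-v$ resp.\ $v$ with the appropriate coefficient), hence by \eqref{formulaEq} action-minimizers $\gamma_1,\gamma_2$ on $\partial K_1,\partial K_2$ whose sum of actions equals $\ehzcap(K)$ (this is where \eqref{cEq} is used, to check that the total action adds up correctly), and then subadditivity \cite[Theorem~1.8]{pazit} forces equality, i.e.\ cuts additivity for this cut. Conversely, if $K$ is additive for this cut, Proposition~\ref{additiveDescriptionThm} gives an action-minimizing $\gamma$ splitting along $H$; realizing $\gamma$ via \eqref{formulaEq} as a maximizing sequence, ordering its segments so the $K_1$-part comes first, and reading off $m$ and the $\beta_i,n_i$, the chord condition $\gamma(t_2)-\gamma(t_1)\in\R_+Jv$ becomes $\sum_{i=1}^m\beta_i n_i = cv$ and the height of $H$ pins down $c$ via \eqref{cEq}. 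One subtlety to handle carefully is that a priori the visits of $\gamma$ to $H$ could be at more than two times, or $\gamma$ could run along $H$; I would argue (using that on the minimizer the velocity between the two crossings integrates to a vector in the direction $Jv$, and strict convexity of the polytope transverse to facets is not available, so instead) that one may always choose the maximizing sequence / the representative $\gamma$ so that the $H$-part is a single connected arc, exactly as in the ``one can choose $\gamma_1$ and $\gamma_2$ so that $l_1$ and $l_2$ are connected'' step in the proof of Proposition~\ref{additiveDescriptionThm}, which is the same reordering trick.

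For the homotopy equivalence, I would exhibit explicit continuous maps in both directions between $B$ (closed characteristics cut by $H$ with $\gamma(0),\gamma(T)\in H$ and $\gamma(T)-\gamma(0)\parallel Jv$) and the space $\mathcal{C}$ of combinatorial cuts for $(v,t)$, topologized as a subset of $M(K)$ (a subset of a union of finite-dimensional simplices of coefficients $(\beta_i)$ over ordered tuples of normals). The map $B\to\mathcal{C}$ sends $\gamma$ to its vector of velocity-segment lengths and normals, ordered with the $K_1$-segments first; the map $\mathcal{C}\to B$ sends a maximizing sequence to the concatenation-of-straight-segments closed characteristic (the ``$\#$'' construction of Proposition~\ref{additiveDescriptionThm} and of \eqref{formulaEq}), suitably translated so $\gamma(0)\in H$. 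The composite $\mathcal{C}\to B\to\mathcal{C}$ is the identity up to the choice of ordering within the $K_1$- and $K_2$-blocks; the composite $B\to\mathcal{C}\to B$ replaces $\gamma$ by a reordered-and-straightened representative, and one builds a homotopy to the identity by (i) linearly interpolating each curved segment to the corresponding straight segment inside the (flat, since $K$ is a polytope) facet it lies on — here one uses that inside a facet any two paths with the same endpoints are straight-line homotopic within the facet, and the velocity stays in $JN_K$ throughout so the interpolants remain characteristics of the same action by Clarke duality — and (ii) homotoping the ordering of segments, which is possible because permuting velocity segments of a characteristic is realized by a path in $\mathrm{Sys}(K)$ (reorder two adjacent blocks continuously, shifting time). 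I expect the \textbf{main obstacle} to be making this reordering/straightening homotopy genuinely continuous \emph{as a family}, i.e.\ uniformly over $B$: one must be careful that the combinatorial type (which facets are visited, in what cyclic order, which ones carry zero length) can jump, so $B$ and $\mathcal C$ are stratified spaces and the homotopy must be compatible with the strata; I would address this by first passing to the (finite) stratification by combinatorial type, checking the equivalence stratum-by-stratum and on the closure relations, and invoking that a stratified map which is a homotopy equivalence on each stratum and on links is a homotopy equivalence — alternatively, and more robustly, by noting that both $B$ and $\mathcal{C}$ deformation retract onto the common finite cell complex of ``straight'' combinatorial cuts, making the equivalence manifest.
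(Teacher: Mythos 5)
Your overall strategy matches the paper's: translate between maximizing sequences of \eqref{formulaEq} and closed characteristics, use subadditivity from \cite{pazit} to upgrade an inequality to an equality, and for the homotopy equivalence deformation retract both $B$ and the space of combinatorial cuts onto a common subspace of ``canonical'' cuts with a fixed order of normals on each coisotropic face (your ``more robust'' fallback is exactly the paper's argument, including the removal of Lagrangian swaps). One presentational difference: for the ``if'' direction the paper does not route through Proposition~\ref{additiveDescriptionThm} at all; it works purely at the level of \eqref{formulaEq}, appending $-v$ (resp.\ $v$) with coefficient $c$ to the two blocks, checking the constraints of $M(K_1)$ and $M(K_2)$, and computing directly. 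Your route via actual characteristics is workable but requires the sequence-to-characteristic dictionary, which the paper only needs for the homotopy statement.

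The one genuine gap is that you never execute the computation that makes Definition~\ref{combinatorialCutDef} the \emph{right} definition: you write ``$\sum\beta_i h_K(n_i) = c\,h_K(v) + c\,(t-?)$'' and defer to ``bookkeeping.'' The heart of the forward direction is precisely that substituting \eqref{cEq} yields $H_1 = 2\ehzcap(K)A_1 = A_1/A$ and $H_2 = A_2/A$ (where $H_i$ are the normalizing sums for $K_1,K_2$ and $A = A_1 + A_2$ uses $\omega(cv,-cv)=0$), whence
\[
\ehzcap(K_1)+\ehzcap(K_2) \;\le\; \frac{H_1^2}{2A_1}+\frac{H_2^2}{2A_2} \;=\; \frac{A_1}{2A^2}+\frac{A_2}{2A^2} \;=\; \ehzcap(K),
\]
and only then does subadditivity close the argument. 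Without verifying that the specific constant $c$ of \eqref{cEq} produces exactly this cancellation, the claim that a combinatorial cut implies additivity is unproven; the analogous check is also what makes the converse work. The rest of your write-up (the chord condition $\sum_{i=1}^m\beta_i n_i\in\R_+ v$ being equivalent to $\gamma(t_2)-\gamma(t_1)\in\R_+ Jv$, the connectedness of the $H$-segments, and the stratification concerns for the homotopy) is consistent with the paper.
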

\begin{proof}
    Let $$(\beta_i, n_i)_{i=1}^k \in M(K)$$ be a combinatorial cut for $(v,t).$
    Denote $$A_1 = \sum_{1\leq i < j \leq m} \beta_i \beta_j \omega(n_i,n_j),$$ and $$A_2 = \sum_{m+1\leq i < j \leq k} \beta_i \beta_j \omega(n_i,n_j).$$ Note that $$A_1 + A_2 = \sum_{1\leq i < j \leq k} \beta_i \beta_j \omega(n_i,n_j) =: A$$
    Indeed, from the constraint $\sum_{i=1}^k \beta_i n_i = 0$ we have $\sum_{i=m+1}^k \beta_i n_i = -c v$, and 
    $$ A = A_1 + A_2 + \sum_{i=1}^m \sum_{j=m+1}^k \beta_i \beta_j \omega(n_i,n_j) = A_1 + A_2 + \omega(cv,-cv) = A_1 + A_2.$$
    Denote $$H_1 = \sum_{i=1}^m \beta_i h_K(n_i) - c (h_K(v) - t).$$
    Consider the sequence $$(\beta'_i, n'_i)_{i=1}^{m+1} = (\frac{\beta_i}{H_1},n_i)_{i=1}^m \cup (\frac{c}{H_1}, -v).$$
    We have $$\sum_{i=1}^{m+1} \beta'_i n'_i = \frac{1}{H_1}\sum_{i=1}^m \beta_i n_i - \frac{c}{H_1}v = 0.$$
    Moreover, 
    $$\sum_{i=1}^{m+1} \beta'_i h_{K_1}(n'_i) = \frac{1}{H_1}\sum_{i=1}^m \beta_i h_K(n_i) - \frac{c}{H_1} (h_K(v) - t) = 1.$$
    Since the constraints of $M(K_1)$ are satisified, we have
    $\ehzcap(K_1) \leq \frac{H_1^2}{2A_1}.$
    Consider the sequence of normals for $K_2$:
    $$(\beta''_i, n''_i)_{i=m}^k = (\frac{c}{H_2}, v) \cup (\frac{\beta_i}{H_2}, n_i),$$
    where $$H_2 = \sum_{i=m+1}^k \beta_i h_K(n_i) + c (h_K(v) - t).$$
    The constraints for $M(K_2)$ are again satisfied and
    $$ \ehzcap(K_2) \leq \frac{H_2^2}{2A_2} = \frac{(1 - H_1)^2}{2A_2}.$$
    Denote $H =\sum_{i=1}^m \beta_i h_K(n_i)$ and note that after substituting $c$ using~\eqref{cEq} one gets 
    $$H_1 = H - (H - 2\ehzcap(K) A_1) = 2 \ehzcap(K) A_1 = \frac{A_1}{A},$$
    $$H_2 = 1-H_1 = \frac{A_2}{A}.$$
    Finally,
    $$ \ehzcap(K_1) + \ehzcap(K_2) \leq \frac{A_1}{2A^2} + \frac{A_2}{2A^2} = \frac{1}{2A} = \ehzcap(K), $$
    and from subadditivity we get $\ehzcap(K_1) + \ehzcap(K_2) = \ehzcap(K).$

    In the other direction, if $\ehzcap(K_1) + \ehzcap(K_2) = \ehzcap(K)$, then applying Formula~\eqref{formulaEq} to $K_1$ and $K_2$ separately with each normal appearing at most once, we get two sequences that one can show, after repeating similar calculations as above, that they form a combinatorial cut.

    \medskip

    Let $\gamma \in B$, and let $\gamma_1,\gamma_2$ be the two pieces obtained by cutting $\gamma$ along $H$, so that $\gamma_i$ is an action-minimizing closed characteristic on $\partial K_i$, $i=1,2$.

    By Lemma~\ref{nonExtremeIsIsotropicLemma}, whenever $\dot{\gamma}(t)$ does not lie in an isotropic subspace of $N_K(\gamma(t))$, it is equal to $J$ times a unit outer normal of $\partial K$. Moreover, for any motion of $\gamma$ along a coisotropic face (generated by unit outer normals whose pairwise $\omega$-values vanish), the total movement along the face
    yields a linear combination of the vectors $J n$, where $n$ ranges over the unit outer normals of that face.
    
    All such linear combinations arising from motions along coisotropic faces together with the coefficients of each unit outer normal, yields a representation of $c Jv$ as a linear combination of the $J$-images of unit outer normals. 
    Similarly, $\gamma_2$ gives a representation of $-c Jv$.

    The homotopy equivalence is constructed by choosing for each coisotropic face a fixed order of the normals comprising that face. 
    If there are several ways to write the total movement along each face as a linear combination, we choose, for each face, a continuous function from the normal cone to the coefficients space of convex combinations (there are several standard ways to achieve this).
    Changing the velocities of an action-minimizing closed characteristic while it moves inside a fixed coisotropic face is a convex operation, so we can continuously deform $\gamma_1$ within each coisotropic face, in a canonical way, so that the motion splits into separate segments with velocities $J n$, one for each normal appearing in the unique linear combination, with the specified order of that coisotropic face.
    By means of Lemma~\ref{finiteFacesLemma}, one indeed gets a finite linear combination.
    An analogous procedure for $\gamma_2$ produces a similar homotopy, and performing these homotopies simultaneously for $\gamma_1$ and $\gamma_2$, and then for all $\gamma \in B$ gives a deformation retraction of $B$ onto the subspace of combinatorial cuts given by the specified order of normals for each coisotropic face.

    In addition, the space of combinatorial cuts also deformation retracts onto the same subspace by swapping adjacent normals with vanishing $\omega$ value.

\end{proof}

\section{Cuts Additivity and Zoll}\label{additiveZollSection}

In this section we prove Theorem~\ref{ZollIsAdditiveWellDefinedThm} and Theorem~\ref{additiveIsZollNonSmoothThm}.

\begin{lemma}
\label{additive_or_double_lemma}
Let a convex $K \subset \R^{2n}$ be such that it is foliated by action minimizing closed characteristics and the characteristic direction is well-defined. Then for every hyperplane cut into $K_1$ and $K_2$ one has either 
$$ \ehzcap(K) = \ehzcap(K_1) = \ehzcap(K_2), \; \text{ or } \; \ehzcap(K) = \ehzcap(K_1) + \ehzcap(K_2).$$
\end{lemma}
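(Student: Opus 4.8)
The plan is to fix a hyperplane cut $K = K_1 \cup K_2$ along a hyperplane $H$ with unit normal $u$, and to exploit the fact that $\partial K$ is foliated by action-minimizing closed characteristics with well-defined dynamics. First I would invoke subadditivity from \cite[Theorem 1.8]{pazit}, which gives $\ehzcap(K) \leq \ehzcap(K_1) + \ehzcap(K_2)$ for free; the work is to show that the only two possibilities for the pair $\bigl(\ehzcap(K_1), \ehzcap(K_2)\bigr)$ are the ``degenerate'' one, in which one piece already carries a minimizer of full capacity (and hence by monotonicity both do), and the ``additive'' one covered by Proposition~\ref{additiveDescriptionThm}. The key dichotomy is dynamical: given any point $x \in H \cap \partial K$, let $\gamma_x$ be the unique characteristic through $x$ (well-defined by hypothesis), reparametrized so it is action-minimizing; since $\partial K$ is foliated by such minimizers, every point of $\partial K$ lies on one.

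The central step is to track how a minimizing characteristic $\gamma$ meets $H$. If some action-minimizing $\gamma$ stays entirely in $\overline{K_1}$ (equivalently, never crosses to the interior of $K_2$), then $\gamma$ is a closed characteristic on $\partial K_1$, so $\ehzcap(K_1) \leq \mathcal{A}(\gamma) = \ehzcap(K)$; combined with monotonicity $\ehzcap(K_1) \leq \ehzcap(K)$ this forces $\ehzcap(K_1) = \ehzcap(K)$, and symmetrically for $K_2$, landing in the first case. Otherwise every minimizer genuinely crosses $H$; since closed characteristics return to their starting point, a minimizer that leaves $K_1$ into $K_2$ must come back, so it meets $H$ at (at least) two times $t_1 < t_2$. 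I would then use the equality-case analysis behind Proposition~\ref{additiveDescriptionThm}: consider the broken loop obtained by replacing the arc $\gamma|_{(t_1,t_2)}$ with the chord $[\gamma(t_1),\gamma(t_2)]$ inside $H$, and its complement. The ``defect'' in action between $\gamma$ and the concatenation of these two closed curves on $\partial K_1$ and $\partial K_2$ is controlled; if it is strictly negative one gets a contradiction with minimality of $\gamma$ (a competitor with smaller action), and if it vanishes one is exactly in the situation $\gamma(t_2) - \gamma(t_1) \in \R_+ Ju$ of Proposition~\ref{additiveDescriptionThm}, which yields $\ehzcap(K) = \ehzcap(K_1) + \ehzcap(K_2)$.

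The point requiring care is the crossing geometry when $\gamma$ meets $H$ at more than two times, or along a segment contained in $H$: one must select an adjacent pair of crossing times so that the arc between them lies on one side, and argue that the chord connecting them is realized with the correct sign of the normal $Ju$. Here the well-definedness of the dynamics and the foliation hypothesis are what rule out pathological reentrant behavior --- roughly, if the minimizer through a boundary point of $H \cap \partial K$ is forced to wander back and forth across $H$ without any chord lying in $\R_+ Ju$, one produces a strictly better competitor, contradicting that $\gamma$ is a systole. I expect the main obstacle to be precisely this case analysis of multiple or tangential intersections with $H$, i.e.\ verifying that among all crossings there is always one realizing the equality configuration of Proposition~\ref{additiveDescriptionThm} whenever we are not in the degenerate case; the variational comparison itself (gluing arcs with chords and estimating the action) is essentially the argument already carried out in the proof of Proposition~\ref{additiveDescriptionThm} and in \cite{pazit}.
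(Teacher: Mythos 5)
There is a genuine gap, on two counts.

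First, your ``degenerate'' case does not work as stated. From a $K$-minimizer $\gamma$ staying in $\overline{K_1}$ you deduce $\ehzcap(K_1)\le\mathcal{A}(\gamma)=\ehzcap(K)$ and then invoke monotonicity $\ehzcap(K_1)\le\ehzcap(K)$ --- but these two inequalities point in the \emph{same} direction, so they do not force equality; and nothing about $K_2$ follows ``symmetrically'' from a minimizer of $K$ lying in $\overline{K_1}$. The correct dichotomy (the one the paper uses) is whether the minimizers \emph{of $K_1$ and of $K_2$} meet $H$: if the $K_1$-minimizer avoids $H$, it is a closed characteristic on $\partial K$, giving the missing inequality $\ehzcap(K_1)\ge\ehzcap(K)$, and likewise for $K_2$.

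Second, in the main case your argument reduces to the assertion that some crossing pair $t_1<t_2$ of a $K$-minimizer satisfies $\gamma(t_2)-\gamma(t_1)\in\R_+Ju$, ``or else one produces a strictly better competitor.'' No such competitor is produced: the broken loops obtained by chord replacement are not closed characteristics on $\partial K_1,\partial K_2$ unless the chord is parallel to $Ju$ with the correct sign, and they are not admissible for the minimization defining $\ehzcap(K)$ either (the chord leaves $\partial K$), so the action identity $\mathcal{A}(\gamma)=\mathcal{A}(\gamma_1)+\mathcal{A}(\gamma_2)$ yields no contradiction. This is precisely the hard step, and the paper handles it by a different construction: it starts from an action minimizer of $K_1$ (not of $K$) that visits $H$, normalized (via the equality analysis of \cite{pazit}) to have a single segment on $H$ with velocity $Jn$; it extends the off-$H$ arc, using the foliation and well-definedness of the dynamics, to a closed minimizer $\eta$ of $K$; it then proves that the extension lies entirely in $\partial K_2$ by a Clarke-duality rearrangement (swapping arcs preserves $I_K$ and the action, so the rearranged loop is again a minimizer, and the sign of $\langle\dot\eta(t_2),n\rangle$ gives a contradiction otherwise); finally the $\partial K_2$-piece of $\eta$, closed up by a chord parallel to $Jn$, realizes $\ehzcap(K_2)\le\ehzcap(K)-\ehzcap(K_1)$, with subadditivity supplying the reverse inequality. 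Your proposal does not supply a substitute for the extension step or for the rearrangement argument, so the crossing-geometry case analysis you flag as ``requiring care'' is left entirely open.
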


\begin{proof} [Proof of Theorem \ref{ZollIsAdditiveWellDefinedThm}]

Let a convex body $K\subset \R^{2n}$ be generalized Zoll such that the characteristic dynamics is well defined.
By \cite[Theorem 1.7]{Matijevic}, $K$ is foliated by action-minimizers.
From Lemma \ref{additive_or_double_lemma} we know that for every hyperplane cut one has either 
$$\ehzcap(K_1) + \ehzcap(K_2) = \ehzcap(K) \; \text{ or } \; \ehzcap(K_1) + \ehzcap(K_2) = 2 \ehzcap(K).$$
From the continuity of the EHZ capacity it must be the same value for all hyperplane cuts of $K$. 
It cannot be $2 \ehzcap(K)$ since one can choose a hyperplane so that $K_1$ has arbitrarily small volume and since $\ehzcap(K_1) = \ehzcap(K)$, $K_1$ has arbitrarily large systolic ratio which is a contradiction.
\end{proof}

\begin{proof}[Proof of Lemma \ref{additive_or_double_lemma}]
If both $K_1$ and $K_2$ do not have a closed characteristic with minimal action that intersects the cutting hyperplane $H$ then one has 
$$ \ehzcap(K) = \ehzcap(K_1) = \ehzcap(K_2). $$
Otherwise assume without loss of generality that $K_1$ has an action minimizing closed characteristic $\gamma$ which visits $H$.
Assume $\gamma$ is normalized such that $\| \dg(t) \| = 1$ for all $t$. 
Denote the normal to $H$ into $K_2$ by $n$.
Following~\cite{pazit} we may assume that there is only one segment in the set 
$ \{ t : \dg(t) = Jn \}. $
Denote the period of $\gamma$ to be $T_1$.
Assume without loss of generality that the endpoints of this segment are $\gamma(t_1)$ and $\gamma(T_1)$.
Consider the characteristic in $K$ $\gamma(t)$ for $t \in (0,t_1)$. Since $K$ is foliated by action minimizers and the dynamics is well defined, $\gamma$ has a completion to a closed characteristic $\eta$ on $K$ with minimal action.
Denote the period of $\eta$ by $T_2$.
We wish to prove that $\eta(t) \in \partial K_2$ for every $t \in (t_1,T_2)$.
Otherwise there are times $t_1 \leq t_2 < t_3 \leq T_2$ such that $\eta(t) \in \partial K_1 \setminus \partial K_2$ for every $t \in (t_2,t_3)$. In particular, we can choose $t_2$ so that the derivative $\deta(t_2)$ is defined and satisfies $\langle \deta(t_2), n \rangle < 0.$
Consider the loop $\zeta$ defined up to translations as follows:
\[ 
\dzeta(t) = \left\{ \begin{matrix} 
\deta(t + t_1), & &  t \in (0, t_2-t_1) \\ 
\deta(t + t_1 - t_2) , & & t \in (t_2-t_1,t_2) \\
\deta(t), & & t \in (t_2, T_2) \\
 \end{matrix} \right. 
\]
The action difference is 
\begin{align*}
& \int_0^{T_2} \int_0^t \langle -J \deta(t), \deta(s) \rangle ds dt  - \int_0^{T_2} \int_0^t \langle -J \dzeta(t), \dzeta(s) \rangle ds dt \\
= & 2\int_{t_1}^{t_2} \int_0^{t_1} \langle -J \deta(t), \deta(s) \rangle ds dt = 2  \langle -J (\eta(t_2) - \eta(t_1)) , c Jn \rangle \\
= & -2c \langle \eta(t_2) - \eta(t_1) ,  n \rangle = 0.
\end{align*} 
Where $c >0$ is some constant, and the last equality is due to the fact that $\eta(t_1), \eta(t_2) \in H$.
On the other hand $I_K(\eta) = I_K(\zeta)$, so by Clarke's dual action principle, some constant times $\zeta$ (after a translation) is indeed a closed characteristic with minimal action on $K$.
Since $\dzeta(t) = \deta(t)$ for $t \in (0,t_1)$, we have that $\zeta([t_2-t_1,t_2])$ is a translation of $\eta([0,t_1])$ with some vector $x_0.$ In particular $ \zeta(t_2-t_1) - \zeta(t_2) = c Jn$ for some $c > 0,$ and we have that for in $\zeta(t_2)$, $Jn$ is either tangent to $\partial K$ or points towards the interior of $K$. In any case we have that for all vectors $v \in N_K(\zeta(t_2)),$ $\langle Jn, v \rangle \leq 0.$ In particular, $\langle \partial_+\zeta(t_2), n\rangle \geq 0.$ On the other hand $\partial_+\zeta(t_2) = \deta(t_2)$, and $\langle \deta(t_2), n\rangle < 0.$ A contradiction.

Now we can define a closed characteristic on $K_2$ by taking $\eta$ between $t = t_1$ and $t = T_2$ and connecting $\eta(t_1)$ and $\eta(T_2)$ in a straight line. 
\[ 
\gamma_2(t) = \left\{ \begin{matrix} 
\frac{t}{t_1} \eta(t_1) + \frac{(t_1-t)}{t_1} \eta(T_2) , & & t \in (0,t_1) \\
\eta(t), & & t \in (t_1,T_2) 
 \end{matrix} \right. 
\]
This is a closed characteristic since $\gamma_2(t_1) - \gamma_2(0) = -c Jn .$
Since the concatenation of $\gamma_2$ and $\gamma$ is $\eta$ we have that the action of $\gamma_2$ 
$$ \mathcal{A}(\gamma_2) = \ehzcap(K) - \ehzcap(K_1).$$ 
So overall
\[ \ehzcap(K_2) \leq \ehzcap(K) - \ehzcap(K_1), \]
but from sub-additivity we get the reverse inequality which implies equality.

\end{proof}

\medskip\medskip

Consider the FR index defined using the Alexander--Spanier cohomology as in \cite{fadellRabinowitz}.
Recall that for a metric space $X$ with an $S^1$ action, we define the $S^1$-fiber bundle 
$$\pi: X \times ES^1 \to X \times_{S^1} ES^1 := (X \times ES^1)/S^1,$$ 
where $ES^1 \to BS^1$ is the universal $S^1$ bundle over the classifying space.
We set 
$$\check{H}^*_{S^1}(X) := \check{H}^*(X \times_{S^1} ES^1),$$
where $\check{H}^*$ is the Alexander--Spanier cohomology groups.
We denote $e$ to be the pull-back by the projection $X \times_{S^1} ES^1 \to BS^1$ of the generator of $\check{H}^2(BS^1)$.
The FR index is defined to be
$$ \text{ind}_{\text{FR}}(X) = \sup\{ k+1 : e^k \neq 0\}.$$
A useful fact is that if the action is free, one has the Gysin long exact sequence 
$$ \cdots \xrightarrow{\pi^*} \check{H}^{*+1}(X) \xrightarrow{\pi_*} \check{H}^*_{S^1}(X) \xrightarrow{\cup_e} \check{H}^{*+2}_{S^1}(X) \xrightarrow{\pi^*} \check{H}^{*+2}(X)\to \cdots$$

Under our normalization, $\mathrm{Sys}(K)$ is naturally an $S^1$-space. 
Recall that $K$ is \emph{generalized Zoll} if the FR index
\[
\mathrm{ind}_{\mathrm{FR}}(\mathrm{Sys}(K)) \geq n.
\]

\medskip \medskip

    \begin{proof}[Proof of Theorem~\ref{additiveIsZollNonSmoothThm}]

Normalize $K$ to have $\ehzcap(K) = 1.$ 
Let $\gamma \in \text{Sys}(K).$

For $t \in S^1$ consider the loop
\[
 \eta_t := \gamma([0,t]) \# [\gamma(t),\gamma(0)],
\]
where $[\gamma(t),\gamma(0)]$ is the straight segment joining $\gamma(t)$ and $\gamma(0)$. A simple consequence of convexity of $K$ shows that the function
\[
g_\gamma(t) := \mathcal{A}(\eta_t)
\]
is monotone increasing in $t$.
Moreover, $C^0$ compactness of $\mathrm{Sys}(K) \times [0,1]$ implies that there exists a small $\delta>0$ such that the inverse value $g_\gamma^{-1}(\varepsilon)$ is well-defined and continuous both in $\gamma$ and in $\varepsilon$ for $\varepsilon$ in a neighborhood of $\delta$. Moreover, $\|\gamma(g_\gamma^{-1}(\varepsilon)) - \gamma(0)\| \ge c > 0$
for some uniform constant $c>0$ independent of $\gamma$.

Define a continuous map
\[
f_\varepsilon : \mathrm{Sys}(K) \to S^{2n-1}
\]
by
\[
f_\varepsilon(\gamma) = -J\frac{ \gamma\bigl(g_\gamma^{-1}(\varepsilon)\bigr) -\gamma(0)}{\bigl\|\gamma\bigl(g_\gamma^{-1}(\varepsilon)\bigr) - \gamma(0)\bigr\|}.
\]
Note that $f_\varepsilon(\gamma) = v$ if and only if a hyperplane cut with normal $v$ and with $\ehzcap(K_1) =\varepsilon$ cuts $\gamma$ additively.
By our assumptions, $f_\varepsilon$ is a surjection and every fiber $f_\varepsilon^{-1}(u)$ is contractible for every $u \in S^{2n-1}$.
The Vietoris-Begle mapping theorem together with compactness of $\text{Sys}(K)$ in the $C^0$ topology gives that  $\check{H}^k(\text{Sys}(K); C^0) = H^k(S^{2n-1})$ for every $k$.

When plugging this into the Gysin sequence above, we get that the FR index $\text{ind}_{\text{FR}}(\text{Sys}(K); C^0) \geq n.$ Following~\cite{Matijevic}, this proves that $K$ is generalized Zoll.
\end{proof}

\begin{remark}

We remark that
monotonicity of the FR index gives that the FR index of small $C^0$ $S^1$-equivariant open subsets $\mathcal{U} \subset W^{1,2}(S^1,\R^{2n})$ of $\text{Sys}(K)$ are also greater than or equal to $n$, and, being open subsets of a Banach space, their Alexander--Spanier cohomologies coincide with their singular cohomologies.

\end{remark}
\begin{remark}
     The fact that equivalence classes of $\sim$ are convex and hence contractible, together with Theorem~\ref{compactQuotientTopologyThm}, show that if $K$ does not admit isotropic gliding then
$$\check{H}^k(\text{Sys}(K); C^0) = \check{H}^k(\text{Sys}(K)/\sim; C^0) = \check{H}^k(\text{Sys}(K)/\sim; d_{L}).$$

\end{remark}

\section*{Appendix}

\begin{proof}[Proof of Proposition~\ref{simplexCapCubeIsZollThm}]
We work with the combinatorial description of cuts from
Lemma~\ref{combinatorialCutLemma}.

Recall that the outer unit normals of $Y$ are
$$
  \{\pm e_1,\pm e_2,\pm e_3,\pm e_4,e_s\},\qquad
  e_s := \Bigl(\tfrac12,\tfrac12,\tfrac12,\tfrac12\Bigr),
$$
and the support function satisfies
\[
  h_Y(-e_i)=0,\quad h_Y(e_i)=\tfrac12,\quad h_Y(e_s)=\tfrac12,\qquad i=1,2,3,4.
\]
The Lagrangian faces are spanned by one normal from
\(\{\pm e_1,\pm e_3\}\) and one from \(\{\pm e_2,\pm e_4\}\).

\medskip

\noindent\textbf{Step 1: Maximizing sequences for $Y$.}
We first classify maximizing sequences in \(M(Y)\) where each normal
appears at most once. 

Up to cyclic permutations and Lagrangian swaps there is a unique maximizing order in which each normal appears at
most once:
\[
  -e_3,\ e_1,\ -e_4,\ e_2,\ e_s,\ e_3,\ -e_1,\ e_4,\ -e_2.
\]
Indeed, one sees that any reordering that reverses the sign of some nonzero \(\omega\)-pair strictly decreases \(\sum_{i<j}\beta_i\beta_j\omega(n_i,n_j)\), independently of the actual values of the \(\beta_i\).

Now write the coefficients of the normals in a maximizing sequence as
follows:
\[
  a_i \text{ for } e_i,\quad
  b_i \text{ for } -e_i\ (i=1,\dots,4),\quad
  b_s \text{ for } e_s.
\]
The constraints defining \(M(Y)\) 
imply
\begin{equation}\label{branchConstraintsEq}
  b_i = a_i + \tfrac12 b_s,\qquad
  \sum_{i=1}^4 a_i + b_s = 2.
\end{equation}
A direct computation of the action term $A := \sum_{i<j}\beta_i\beta_j\omega(n_i,n_j)$
yields
\begin{equation}\label{AFormulaEq}
  A = 2a_1a_3 + 2a_2a_4 + 2b_s - \tfrac12 b_s^2.
\end{equation}
For fixed \(b_s\in[0,2]\), maximizing \(A\) under the constraint
\(\sum_{i=1}^4 a_i = 2-b_s\) gives exactly two branches:
\[
  \text{(I)}\quad
  a_1=a_3=\tfrac{2-b_s}{2},\ a_2=a_4=0,
  \qquad\text{or}\qquad
  \text{(II)}\quad
  a_1=a_3=0,\ a_2=a_4=\tfrac{2-b_s}{2}.
\]
In branch~(I), using \eqref{branchConstraintsEq},$b_1=b_3=1,\quad b_2=b_4=\tfrac{b_s}{2},$
and in branch~(II) the roles of \((1,3)\) and \((2,4)\) are exchanged and one has $b_2=b_4=1,\quad b_1=b_3=\tfrac{b_s}{2}.$
Substituting into \eqref{AFormulaEq} gives, in both cases,
\[
  A
  = \frac{(2-b_s)^2}{2} + 2b_s - \frac{b_s^2}{2}
  = 2,
\]
so both branches consist of maximizing sequences for all
\(b_s\in[0,2]\). At \(b_s=2\) the two branches meet at $a_1=a_2=a_3=a_4=0.$

We now allow sequences in which some normals appear more than once.
A \emph{cross} is a pattern in which a normal appears twice, with a block of normals in-between where inside this block it has \(\omega\)-pairings of both signs
whose sum is zero; such a pattern cannot be eliminated by Lagrangian swaps
and cyclic permutations alone.
Consider branch~(I) with \(b_s>0\), so
\(a_2=a_4=0\) and all other coefficients are strictly positive.
Then the only possible cross is when the normal \(e_s\) “passes over’’ both
\(e_2\) and \(e_4\). This yields the
additional maximizing sequence
\[
  -e_3,\ e_1,\ e_s,\ -e_2,\ -e_4,\ e_s,\ e_3,\ -e_1.
\]
A symmetric cross arises in branch~(II),
\[
  -e_4,\ e_2,\ e_s,\ -e_1,\ -e_3,\ e_s,\ e_4,\ -e_2.
\]
One easily checks that each such cross can occur at most once.

\medskip

\noindent\textbf{Step 2: Combinatorial cuts for a given direction.}\\
Fix a direction $v = \sum_{i=1}^4 \alpha_i e_i,$
and consider hyperplane cuts with normal \(v\).
By Lemma~\ref{combinatorialCutLemma} and
Remark~\ref{arbitraryCutRemark}, it suffices to show existence of families of maximizing sequences in \(M(Y)\) of the following form:

\begin{itemize}
\item For each sequence in the family, there is a block \((\beta_i,n_i)_{i=1}^m\) consisting of unit
  outer normals for one side of the cut (say, for \(Y_1\)) such that
  \(\sum_{i=1}^m \beta_i n_i = c\,v\) with all \(\beta_i\ge 0\) and
  \(c>0\).
\item The remaining block \((\beta_i,n_i)_{i=m+1}^k\) completes this to a global
  maximizing sequence in \(M(Y)\) in one of the branches described
  in Step~1.
\item When $v \in \text{int}(N_K(x))$ for some vertex $x \in \partial Y$, 
    there is such a sequence for every $c>0$ sufficiently small.
Note that the inequality in
  Remark~\ref{arbitraryCutRemark} holds for all sufficiently small \(c\).
  Hence for each small \(c>0\) we obtain a combinatorial cut for some
  \(t=t(c)\in(0,h_Y(v))\).
\item For each $c$, the space of such combinatorial cuts is contractible.

\end{itemize}

We now go over three ``representative" cases, and we leave the rest as an exercise.

\noindent\textbf{Case (I):} \(\alpha_3,\alpha_4 > \alpha_1,\alpha_2\) and
\(\alpha_3,\alpha_4>0\).

Then
\[
  v \in N_Y\bigl(0,0,\tfrac12,\tfrac12\bigr),
\]
whose extreme normals are
\(\{e_3,e_4,-e_1,-e_2,e_s\}\). Assume without loss of generality
\(\alpha_3>\alpha_4\). This restricts to branch~(I) where \(a_2=a_4=0\), so \(e_4\)
cannot appear in the first block of the cut.

Writing
\[
  cv = 2c\alpha_4 e_s
        + c(\alpha_3-\alpha_4)e_3
        - c(\alpha_4-\alpha_1)e_1
        - c(\alpha_4-\alpha_2)e_2,
\]
we see this is the unique non-negative linear combination of $cv$.
We take the first block $(n_1,\dots,n_m)$ to be the four normals $e_s,\ e_3,\ -e_1,\ -e_2$
in that order; in this order all nonzero $\omega$-pairings within
the block are positive, which implies that this order uniquely maximizes the block.

In branch~(I) the total coefficients must satisfy
\[
  b_1=b_3=1,\qquad
  b_2=b_4=\tfrac{b_s}{2},
\]
and $a_2=a_4=0$. Note that $e_3$ cannot appear in the second block. Indeed, its coefficient in the first block is positive, and it lies between $e_s$ and $-e_1$ and cannot be connected using Lagrangian swaps to an instance of $e_3$ in the second block.
Hence its coefficient must match
the global $a_3$, we must have
\[
  a_3 = c(\alpha_3-\alpha_4),\qquad
  a_1 = c(\alpha_3-\alpha_4),
\]
and consistency with \eqref{branchConstraintsEq} forces
\[
  b_s = 2 - 2c(\alpha_3-\alpha_4),\quad
  b_1=b_3=1,\quad
  b_2=b_4 = 1 - c(\alpha_3-\alpha_4).
\]
With these coefficients, a combinatorial cut (up to
Lagrangian swaps) is
\[
\begin{aligned}
(n_1,\dots,n_m)
  &= \bigl(
      2c\alpha_4\, e_s,\,
      c(\alpha_3-\alpha_4)\,e_3,\,
      c(\alpha_4-\alpha_1)\,(-e_1),\,
      c(\alpha_4-\alpha_2)\,(-e_2)
     \bigr),\\
(n_{m+1},\dots,n_k)
  &= \bigl(
      (1-c(\alpha_4-\alpha_1))\,(-e_1),\,
      (1-c(\alpha_3-\alpha_2))\,(-e_2),\,
      -e_3,\\
  &\qquad
      c(\alpha_3-\alpha_4)\,e_1,\,
      (1-c(\alpha_3-\alpha_4))\,(-e_4),\,
      (2-2c\alpha_3)\,e_s
     \bigr).
\end{aligned}
\]

In the first block the only possible repetitions of normals arise from
splitting coefficients along Lagrangian faces (e.g.\ moving some portion of
$-e_2$ across $-e_1$ and then over $e_3$). These moves form a convex
parameter set and correspond exactly to Lagrangian swaps; hence all such
configurations deformation retract to the canonical one in which each
normal appears exactly once.

In the second block there is one additional family of configurations:
$e_s$ may “cross over’’ $-e_4$ and $-e_2$. After suitable Lagrangian swaps
we can arrange that the coefficients of the two occurrences of $-e_2$ and
$-e_4$ adjacent to $e_s$ are equal, so that $\omega(-a e_2 - a e_4,\,e_s) = 0,$
and this crossing is controlled by a single real parameter $a$. This family
is contractible and admits a canonical deformation back to the configuration
above, where each normal appears once. Thus, for Case~(I) and for all
sufficiently small $c>0$, the space of combinatorial cuts is contractible.

\medskip

\noindent\textbf{Case (II):} Exactly one $\alpha_i$ is positive.
Without loss of generality, $\alpha_3>0,$ $\alpha_1,\alpha_2,\alpha_4<0.$
Then $v \in N_Y\bigl(0,0,\tfrac12,0\bigr),$
whose extreme normals are \(\{-e_2,-e_4,e_3,-e_1\}\). In this normal cone $e_s$
does not appear, so we must write
\[
  cv
   = -c\alpha_2(-e_2)
     - c\alpha_4(-e_4)
     + c\alpha_3 e_3
     - c\alpha_1(-e_1),
\]
with all coefficients nonnegative (for $c>0$). Up to Lagrangian swaps this
gives a unique order for the first block.

In this case $e_s$ can only appear in the second block, and the only
restriction on $b_s$ is that all coefficients remain nonnegative, which
yields
\[
  2c\max\{-\alpha_2,-\alpha_4\} \;\le\; b_s \;\le\; 2-2c\alpha_3.
\]
An explicit combinatorial cut is then
\[
\begin{aligned}
(n_1,\dots,n_m)
  &= \bigl(
      -c\alpha_2\,(-e_2),\,
      -c\alpha_4\,(-e_4),\,
      c\alpha_3\,e_3,\,
      -c\alpha_1\,(-e_1)
     \bigr),\\
(n_{m+1},\dots,n_k)
  &= \bigl(
      (1+c\alpha_1)\,(-e_1),\,
      -e_3,\,
      \tfrac{2-b_s}{2}\,e_1,\,
      \bigl(\tfrac{b_s}{2}+c\alpha_4\bigr)\,(-e_4),\\
  &\qquad
      b_s\,e_s,\,
      \bigl(\tfrac{2-b_s}{2}-c\alpha_3\bigr)\,e_3,\,
      \bigl(\tfrac{b_s}{2}+c\alpha_2\bigr)\,(-e_2)
     \bigr).
\end{aligned}
\]
Here all coefficients are nonnegative for $c>0$ small and
\(b_s\) in the interval above. There are no nontrivial cross patterns:
each normal either appears only once in each block or splits along
Lagrangian faces in a convex way, and such splittings are removed by
Lagrangian swaps. Hence for fixed $c>0$ the space of cuts is an interval in the parameter $b_s$, and is therefore contractible.

\medskip

\noindent\textbf{Case (III):} \(\alpha_1,\alpha_2,\alpha_3,\alpha_4 < 0\).

In this case $v$ belongs to the normal cone of the vertex
\(\bigl(0,0,0,0\bigr)\), and
$cv$ is a nonnegative combination of \(-e_1,-e_2,-e_3,-e_4\). 
One obtains a unique linear combination for $cv$, and the resulting maximizing sequences for $Y$
described in Step~1 contribute two branches of solutions for $b_s$: Either $a_1=a_3=0$, and the admissible values of $b_s$ are $b_s \in \bigl[\,2c\max\{-\alpha_1,-\alpha_3\},\,2\,\bigr]$, or $a_2=a_4=0$ and $b_s \in \bigl[\,2c\max\{-\alpha_2,-\alpha_4\},\,2\,\bigr].$
At $b_s=2$ the two branches meet, and in both branches the coefficients of
all normals are linear functions of $c$ and $b_s$. As in the previous
cases, the only possible degeneracies come from splitting coefficients
along Lagrangian faces, which form convex parameter sets and can be
straightened by Lagrangian swaps. Therefore, for each fixed $c>0$ small
enough, the union of the two branches is an interval in $b_s$ with the two
ends glued at $b_s=2$, hence contractible. 

\medskip

\noindent\textbf{Step 3: Applying Theorem~\ref{additiveIsZollNonSmoothThm}} \\
In each of the cases above, for every direction $v$ we have constructed,
for all sufficiently small $c>0$, families of maximizing sequences
$(\beta_i(c),n_i)$ with $\sum_{i=1}^m \beta_i(c)\,n_i = c\,v,$
such that the coefficients in the first block are linear in $c$ and
strictly positive for small $c$. Thus $\sum_{1\le i<j\le m}\beta_i(c)\beta_j(c)\omega(n_i,n_j)
\sim O(c^2)$
while $\sum_{i=1}^m \beta_i(c) h_Y(n_i)
\sim O(c),$
so for sufficiently small $c>0$ the inequality in
Remark~\ref{arbitraryCutRemark} holds:
\[
  0
  < \sum_{1\le i<j\le m}\beta_i(c)\beta_j(c)\omega(n_i,n_j)
  < \frac{c\,h_Y(v)}{2\,\ehzcap(Y)}.
\]
By Remark~\ref{arbitraryCutRemark}, for each such $c$ there exists a
(unique) $t=t(c)\in(0,h_Y(v))$ such that $(\beta_i(c),n_i)$ is a
combinatorial cut for $(v,t(c))$. As $c\to 0$, we have $t(c)\to 0$.
Moreover, for each fixed $c$ the space of combinatorial cuts is contractible, so the fibers over $t(c)$
are contractible as well.

When $v$ lies in the intersection of normal cones of several vertices of $Y$,
the family of admissible parameters $c$ produced by the constructions above
may fail to extend all the way down to arbitrarily small values of $c$.
In those degenerate directions one can no longer appeal directly to
Remark~\ref{arbitraryCutRemark} to obtain small cuts. Instead, for such $v$
one verifies directly, using Definition~\ref{combinatorialCutDef} and the
same type of combinatorial analysis of coefficients and orders of normals
as in Cases~(I)–(III), that for all sufficiently small $t>0$ there exist
combinatorial cuts for $(v,t)$ and that the corresponding spaces of
combinatorial cuts are still contractible.

Thus, for every direction $v$ there exists $\varepsilon(v)>0$ such that
for all $t\in(0,\varepsilon(v))$ the cut of $Y$ by the hyperplane
$\{x\mid \langle x,v\rangle = h_Y(v)-t\}$ is additive and the space of
combinatorial cuts is contractible. Lemma~\ref{combinatorialCutLemma}
then implies that the corresponding space $f_\varepsilon^{-1}(v)$ of
action-minimizing closed characteristics cut by that hyperplane is
contractible. By Theorem~\ref{additiveIsZollNonSmoothThm}, this shows that
$Y$ is generalized Zoll.

\end{proof}

\medskip \medskip \medskip

\begin{proof}[Proof of Proposition~\ref{pentagonProp}]

Similarly to the proof of Proposition~\ref{simplexCapCubeIsZollThm}, we show how to employ Lemma~\ref{combinatorialCutLemma} in certain situations, and the rest follow in exactly the same way.

Let $v_1,\ldots,v_5$ be the vertices of $P$ and $w_1,\ldots,w_5$ the vertices of $T$.
Let $n_1,\ldots,n_5$ be the unit outer normals to $P$ so that $n_i$ is orthogonal to $[v_{i-1},v_i]$.
Similarly, $\widetilde{n}_i$ is the unit outer normal to $T$ orthogonal to the segment $[w_{i-1},w_i].$

\medskip

Assume first that $u \in N_K((v_i, w_i)),$ i.e. $u = \alpha_1 n_i + \alpha_2 n_{i+1} + \alpha_3 \widetilde{n}_i + \alpha_4 \widetilde{n}_{i+1}.$
Denote $a, h, d$ to be the edge length, height, and the diagonal length of the pentagon respectively.
A direct computation shows that the sequence
\[
\frac{1}{(4a + 2d)h}\bigl(
      a\,\widetilde{n}_{i+1},\,
      x\,n_{i+1},\,
      (a-x)\,n_i,\, 
      a\,\widetilde{n}_{i},\,
      (a+x(\tfrac{d}{a}-1))\,n_{i+4},\,
      d\,\widetilde{n}_{i+3},\,
      (d-x(\tfrac{d}{a}-1))\,n_{i+2}\,
     \bigr)
\]
is a maximizer for \eqref{formulaEq} in $M(P \times T)$ for every $x \in [0, a]$.
Consider the sequences
\[
\begin{aligned}
  &\frac{1}{(4a + 2d)h}\bigl(
      c\alpha_4\,\widetilde{n}_{i+1},\,
      c\alpha_2\,n_{i+1},\,
      c\alpha_1\,n_{i},\,
      c\alpha_3\,\widetilde{n}_i
     \bigr),\\
  &\frac{1}{(4a + 2d)h}\bigl(
      (a-c (\alpha_1+\alpha_2))\,n_i,\,
      (a-c \alpha_3)\,\widetilde{n}_i,\,
      (a+c\alpha_2(\tfrac{d}{a}-1))\,n_{i+4},\,\\
      &\qquad\qquad\qquad d\,\widetilde{n}_{i+3},\,
      (d-c\alpha_2(\tfrac{d}{a}-1))\,n_{i+2},\,
      (a-c \alpha_4)\,\widetilde{n}_{i+1}
     \bigr).
\end{aligned}
\]
Combining these sequences together to a single sequence, up to Lagrangian swaps, gives the maximizing sequence above with $x=c \alpha_2.$ 
If the action term $A_1$ of the first sequence is positive, these sequences form a combinatorial cut for every small $c$, and invoking Lemma~\ref{combinatorialCutLemma} we get additivity for $(v,t)$ with small enough $t$.

The action term of the first sequence is $A_1 = C (\alpha_1 \alpha_4 - \alpha_2 \alpha_3) $ for some constant $C > 0.$
Hence for the case $\alpha_1 \alpha_4 > \alpha_2 \alpha_3$ this defines a combinatorial cut, and for the analogous case $\alpha_2 \alpha_3 > \alpha_1 \alpha_4$ one is able to define similar sequences.

\medskip

Let us conclude with the case $\alpha_3=\alpha_4=0,$ and $\alpha_1 < \alpha_2.$
Consider the maximizer in $M(P\times T)$ of the form
\[
\frac{1}{(4a + 2d)h}\bigl(
      x\,n_i,\, 
      (d-x(\tfrac{d}{a}-1))\,n_{i+1},\,
      d\,\widetilde{n}_{i+4},\,
      (a-x)\,n_{i+2},\,
      (a+x(\tfrac{d}{a}-1))\,n_{i+3},\,
      a\,\widetilde{n}_{i+2},\,
      a\,\widetilde{n}_{i+1}\,
     \bigr)
\]
One can choose $x$ so that $\frac{x}{d-x(\tfrac{d}{a}-1)} = \frac{\alpha_1}{\alpha_2},$ and then get the following combinatorial cut for every small $\varepsilon >0.$
\[
\begin{aligned}
  &\frac{1}{(4a + 2d)h}\bigl(
      \varepsilon a\,\widetilde{n}_{i+2},\,
      \varepsilon a\,\widetilde{n}_{i+1},\,
      x\,n_i,\, 
      (d-x(\tfrac{d}{a}-1))\,n_{i+1},\,
      \varepsilon d\,\widetilde{n}_{i+3}\,
     \bigr),\\
  &\frac{1}{(4a + 2d)h}\bigl(
      (1-\varepsilon)d\,\widetilde{n}_{i+3},\,
      (a-x)\,n_{i+3},\,
      (a+x(\tfrac{d}{a}-1))\,n_{i+2},\,
      (1-\varepsilon)a\,\widetilde{n}_{i+2},\,
      (1-\varepsilon)a\,\widetilde{n}_{i+1}\,
     \bigr).
\end{aligned}
\]
In this case the coefficient of the direction $v$ is not small. However, one can verify using Lemma~\ref{combinatorialCutLemma} that this sequences indeed give a combinatorial cut for $(v,t)$ for every small enough $t$.

\end{proof}

\bibliography{references1}

\begin{thebibliography}{AAMO08}

\bibitem[AAC24]{AAC}
H. Alizadeh, M.~S. Atallah, and D. Cant.
\newblock The spectral diameter of a symplectic ellipsoid.
\newblock {\em arXiv:2408.07214}, 2024.

\bibitem[AAKO14]{capacity_mahler}
S. Artstein-Avidan, R. Karasev, and Y. Ostrover.
\newblock From symplectic measurements to the {M}ahler conjecture.
\newblock {\em Duke Math. J.}, 163(11):2003–2022, 2014.

\bibitem[AAMO08]{ArtsteinAvidanOstroverMilman}
S. Artstein-Avidan, V. Milman, and Y. Ostrover.
\newblock The {$M$}-ellipsoid, symplectic capacities and volume.
\newblock {\em Comment. Math. Helv.}, 83(2):359--369, 2008.

\bibitem[AAO08]{ArtsteinAvidanOstrover2008}
S. Artstein-Avidan and Y. Ostrover.
\newblock A {B}runn-{M}inkowski inequality for symplectic capacities of convex domains.
\newblock {\em Int. Math. Res. Not. IMRN}, (13):Art. ID rnn044, 31, 2008.

\bibitem[AAO14]{AA-O}
S. Artstein-Avidan and Y. Ostrover.
\newblock Bounds for {M}inkowski billiard trajectories in convex bodies.
\newblock {\em Int. Math. Res. Not. IMRN}, (1):165–193, 2014.

\bibitem[AB23]{abbondandoloBenedetti}
A. Abbondandolo and G. Benedetti.
\newblock On the local systolic optimality of {Z}oll contact forms.
\newblock {\em Geom. Funct. Anal.}, 33(2):299--363, 2023.

\bibitem[ABE25]{abbondandolo-benedetti-edtmair}
A. Abbondandolo, G. Benedetti, and O. Edtmair.
\newblock Symplectic capacities of domains close to the ball and {B}anach-{M}azur geodesics in the space of contact forms.
\newblock {\em Duke Math. J.}, 174(8):1567--1646, 2025.

\bibitem[ABHS18]{ABHS}
A. Abbondandolo, B. Bramham, U.~L. Hryniewicz, and P.~A.~S. Salom{\~{a}}o.
\newblock Sharp systolic inequalities for {R}eeb flows on the three-sphere.
\newblock {\em Invent. Math.}, 211(2):687--778, 2018.

\bibitem[ABKS16]{non_symmetric_mahler}
A. Akopyan, A. Balitskiy, R. Karasev, and A. Sharipova.
\newblock Elementary approach to closed billiard trajectories in asymmetric normed spaces.
\newblock {\em Proc. Amer. Math. Soc.}, 144(10):4501–4513, 2016.

\bibitem[AK22]{abbondandolo-kang}
A. Abbondandolo and J. Kang.
\newblock Symplectic homology of convex domains and {C}larke's duality.
\newblock {\em Duke Math. J.}, 171(3):739--830, 2022.

\bibitem[Bal20]{balitsky}
A. Balitskiy.
\newblock Equality cases in {V}iterbo's conjecture and isoperimetric billiard inequalities.
\newblock {\em Int. Math. Res. Not. IMRN}, (7):1957--1978, 2020.

\bibitem[BB09]{bezdek-bezdek}
D. Bezdek and K. Bezdek.
\newblock Shortest billiard trajectories.
\newblock {\em Geom. Dedicata}, 141:197--206, 2009.

\bibitem[BH12]{bramhamHofer}
B. Bramham and H. Hofer.
\newblock First steps towards a symplectic dynamics.
\newblock In {\em Surveys in differential geometry. {V}ol. {XVII}}, volume~17 of {\em Surv. Differ. Geom.}, pages 127--177. Int. Press, Boston, MA, 2012.

\bibitem[BW58]{boothbyWang}
W.~M. Boothby and H.~C. Wang.
\newblock On contact manifolds.
\newblock {\em Ann. of Math. (2)}, 68:721--734, 1958.

\bibitem[Can25]{cant}
D. Cant.
\newblock Hamiltonian linking and symplectic packing.
\newblock {\em arXiv:2507.01416}, 2025.

\bibitem[CE22]{dynconvnotconvdim4}
J. Chaidez and O. Edtmair.
\newblock 3{D} convex contact forms and the {R}uelle invariant.
\newblock {\em Invent. Math.}, 229(1):243--301, 2022.

\bibitem[CH21]{Ch-H}
J. Chaidez and M. Hutchings.
\newblock Computing {R}eeb dynamics on four-dimensional convex polytopes.
\newblock {\em J. Comput. Dyn.}, 8(4):403--445, 2021.

\bibitem[CHLS07]{capacity_survey_1}
K. Cieliebak, H. Hofer, J. Latschev, and F. Schlenk.
\newblock Quantitative symplectic geometry.
\newblock In {\em Dynamics, ergodic theory, and geometry}, volume~54 of {\em Math. Sci. Res. Inst. Publ.}, pages 1--44. Cambridge Univ. Press, Cambridge, 2007.

\bibitem[Cla79]{clarke}
F.~H. Clarke.
\newblock A classical variational principle for periodic {H}amiltonian trajectories.
\newblock {\em Proc. Amer. Math. Soc.}, 76(1):186--188, 1979.

\bibitem[Edt24]{edtmair}
O. Edtmair.
\newblock Disk-like surfaces of section and symplectic capacities.
\newblock {\em Geom. Funct. Anal.}, 34(5):1399--1459, 2024.

\bibitem[EH89]{hofer-ekeland}
I. Ekeland and H. Hofer.
\newblock Symplectic topology and {H}amiltonian dynamics.
\newblock {\em Math. Z.}, 200(3):355--378, 1989.

\bibitem[FR78]{fadellRabinowitz}
E.~R. Fadell and P.~H. Rabinowitz.
\newblock Generalized cohomological index theories for {L}ie group actions with an application to bifurcation questions for {H}amiltonian systems.
\newblock {\em Invent. Math.}, 45(2):139--174, 1978.

\bibitem[FS07]{frauenfelderSchlenk}
U. Frauenfelder and F. Schlenk.
\newblock Hamiltonian dynamics on convex symplectic manifolds.
\newblock {\em Isr. J. Math.}, 159:1--56, 2007.

\bibitem[Gei08]{Geiges_2008}
H. Geiges.
\newblock {\em An Introduction to Contact Topology}.
\newblock Cambridge Studies in Advanced Mathematics. Cambridge University Press, 2008.

\bibitem[GGM21]{Gi-Gu-Ma}
V.~L. Ginzburg, B.~Z. G\"{u}rel, and M. Mazzucchelli.
\newblock On the spectral characterization of {B}esse and {Z}oll {R}eeb flows.
\newblock {\em Ann. Inst. H. Poincar\'{e} C Anal. Non Lin\'{e}aire}, 38(3):549--576, 2021.

\bibitem[GH18]{Gu-Ha}
J. Gutt and M. Hutchings.
\newblock Symplectic capacities from positive $\mathrm{S}^1$-equivariant symplectic homology.
\newblock {\em Algebr. Geom. Topol.}, 18:3537--3600, 2018.

\bibitem[GHR22]{gutt-hutchings-ramos}
J. Gutt, M. Hutchings, and V.~G.~B. Ramos.
\newblock Examples around the strong {V}iterbo conjecture.
\newblock {\em J. Fixed Point Theory Appl.}, 24(2):Paper No. 41, 22, 2022.

\bibitem[GR24]{GuttRamos}
J. Gutt and V.~G. Ramos.
\newblock The equivalence of {E}keland-{H}ofer and equivariant symplectic homology capacities.
\newblock {\em arXiv:2412.09555}, 2024.

\bibitem[Gro85]{gromov}
M. Gromov.
\newblock Pseudo holomorphic curves in symplectic manifolds.
\newblock {\em Invent. Math.}, 82(2):307--347, 1985.

\bibitem[Her98]{hermann}
D. Hermann.
\newblock Non-equivalence of symplectic capacities for open sets with restricted contact type boundary.
\newblock {\em Prépublication d’Orsay numéro 32}, 1998.

\bibitem[HK19a]{pazit-thesis-msc}
P. Haim-Kislev.
\newblock The {EHZ} capacity of cubes and simplices.
\newblock {\em MSc Thesis at Tel-Aviv University}, 2019.

\bibitem[HK19b]{pazit}
P. Haim-Kislev.
\newblock On the symplectic size of convex polytopes.
\newblock {\em Geom. Funct. Anal.}, 29(2):440--463, 2019.

\bibitem[HKO25]{counterexample}
P. Haim-Kislev and Y. Ostrover.
\newblock A counterexample to viterbo's conjecture.
\newblock {\em To appear in Ann. of Math. (2)}, 2025.

\bibitem[HWZ98]{HWZ-3dconvex}
H. Hofer, K. Wysocki, and E. Zehnder.
\newblock The dynamics on three-dimensional strictly convex energy surfaces.
\newblock {\em Ann. of Math. (2)}, 148(1):197--289, 1998.

\bibitem[HZ94]{hofer_zehnder}
H. Hofer and E. Zehnder.
\newblock {\em Symplectic Invariants and {H}amiltonian Dynamics}.
\newblock Birkhauser Advanced Texts, Birkhauser Verlag, 1994.

\bibitem[Iri22]{irie}
K. Irie.
\newblock Symplectic homology of fiberwise convex sets and homology of loop spaces.
\newblock {\em J. Symplectic Geom.}, 20(2):417--470, 2022.

\bibitem[KS19]{karasev-sharipova}
R. Karasev and A. Sharipova.
\newblock Viterbo's conjecture for certain {H}amiltonians in classical mechanics.
\newblock {\em Arnold Math. J.}, 5(4):483--500, 2019.

\bibitem[K{\"u}n90]{singular_survey_1}
A. K{\"u}nzle.
\newblock Une capacit\'e symplectique pour les ensembles convexes et quelques applications.
\newblock {\em Ph.D. thesis, Universit\'e Paris IX Dauphine}, 1990.

\bibitem[K{\"u}n96]{Kunzle}
A. K{\"u}nzle.
\newblock Singular {H}amiltonian systems and symplectic capacities.
\newblock {\em Singularities and differential equations, Banach Center Publ., 33, Polish Acad. Sci., Warsaw}, pages 171--187, 1996.

\bibitem[Mat25]{Matijevic}
S. Matijevi\'{c}.
\newblock {S}ystolic ${S}^1$-index and characterization of non-smooth {Z}oll convex bodies.
\newblock {\em arXiv:2501.13856}, 2025.

\bibitem[McD14]{capacity_survey_2}
D. McDuff.
\newblock Symplectic topology today.
\newblock {\em AMS Joint Mathematics Meeting}, 2014.

\bibitem[Oh05a]{Oh2005}
Y.-G. Oh.
\newblock Construction of spectral invariants of {H}amiltonian paths on closed symplectic manifolds.
\newblock In {\em The breadth of symplectic and {P}oisson geometry}, volume 232 of {\em Progr. Math.}, pages 525--570. Birkh\"auser Boston, Boston, MA, 2005.

\bibitem[Oh05b]{oh-spectral}
Y.-G. Oh.
\newblock {Spectral invariants, analysis of the Floer moduli space, and geometry of the Hamiltonian diffeomorphism group}.
\newblock {\em Duke Mathematical Journal}, 130(2):199 -- 295, 2005.

\bibitem[Ost14]{capacity_survey_3}
Y. Ostrover.
\newblock When symplectic topology meets {B}anach space geometry.
\newblock {\em proceedings of the ICM}, 2:959--981, 2014.

\bibitem[Rab78]{rabinowitz78}
P.~H. Rabinowitz.
\newblock Periodic solutions of {H}amiltonian systems.
\newblock {\em Comm. Pure Appl. Math.}, 31(2):157--184, 1978.

\bibitem[Rud22a]{Mink-Bill-Rudolf}
D. Rudolf.
\newblock The {M}inkowski billiard characterization of the {EHZ}-capacity of convex {L}agrangian products.
\newblock {\em Journal of Dynamics and Differential Equations}, 2022.

\bibitem[Rud22b]{rudolf}
D. Rudolf.
\newblock Viterbo's conjecture for {L}agrangian products in {$\mathbb{R}^4$} and symplectomorphisms to the {E}uclidean ball.
\newblock {\em arXiv:2203.02294}, 2022.

\bibitem[Sch93]{subdiff_1}
R. Schneider.
\newblock {\em Convex Bodies: the {B}runn-{M}inkowski theory}.
\newblock Encyclopedia of Mathematics and its Applications, 44. Cambridge University Press, 1993.

\bibitem[Sch00]{Schwarz2000}
M. Schwarz.
\newblock On the action spectrum for closed symplectically aspherical manifolds.
\newblock {\em Pacific J. Math.}, 193(2):419--461, 2000.

\bibitem[Vit87]{viterboWeinsteinConj}
C. Viterbo.
\newblock A proof of {W}einstein's conjecture in {${\bf R}^{2n}$}.
\newblock {\em Ann. Inst. H. Poincar\'e{} Anal. Non Lin\'eaire}, 4(4):337--356, 1987.

\bibitem[Vit92]{Viterbo-specGF}
C. Viterbo.
\newblock Symplectic topology as the geometry of generating functions.
\newblock {\em Math. Ann.}, 292(4):685--710, 1992.

\bibitem[Vit00]{viterbo2000}
C. Viterbo.
\newblock Metric and isoperimetric problems in symplectic geometry.
\newblock {\em J. Amer. Math. Soc.}, 13(2):411--431, 2000.

\bibitem[Wei78]{weinstein78}
A. Weinstein.
\newblock Periodic orbits for convex {H}amiltonian systems.
\newblock {\em Ann. of Math. (2)}, 108(3):507--518, 1978.

\bibitem[Zeh13]{ball_add}
K. Zehmisch.
\newblock The codisc radius capacity.
\newblock {\em Electron. Res. Announc. Math. Sci.}, 20:77--96, 2013.

\end{thebibliography}
\bibliographystyle{my_alpha}

\end{document}